\documentclass[11pt]{amsart}

\usepackage{mathrsfs}
\usepackage{amsfonts,amssymb,amsmath,amsthm}
\usepackage{latexsym}
\usepackage{bm}
\usepackage{stmaryrd}
\usepackage{geometry}
\usepackage{indentfirst}
\usepackage{titletoc}
\usepackage{appendix}
\usepackage{enumitem}
\usepackage{graphicx}
\usepackage{float}
\usepackage{booktabs}
\usepackage{longtable}
\usepackage{tikz}
\usetikzlibrary{arrows.meta,positioning}
\usepackage{xcolor}

\usepackage{cite}
\usepackage[colorlinks=true,citecolor=red]{hyperref}

\allowdisplaybreaks[4]
\newcommand{\blackhyperref}[2]{%
  {\hypersetup{linkcolor=black}\hyperref[#1]{#2}}%
}

\newcommand{\R}{\mathbb R}

\newcommand{\D}{\mathbb D}
\newcommand{\Sph}{\mathbb S^2}
\newcommand{\cL}{\mathcal L}
\newcommand{\cA}{\mathcal A}
\newcommand{\dd}{\,\mathrm d}

\newcommand{\out}{\mathrm{out}}

\numberwithin{equation}{section}
\newtheorem{theorem}{Theorem}[section]
\newtheorem{lemma}[theorem]{Lemma}
\newtheorem{corollary}[theorem]{Corollary}
\newtheorem{proposition}[theorem]{Proposition}
\newtheorem{definition}[theorem]{Definition}
\theoremstyle{remark}
\newtheorem{remark}[theorem]{Remark}

\begin{document}

\title[Controlled heat flow and blow up]{Controlled harmonic map heat flow and blow-up\\
from the disk to the Ssphere}

\author{Shengquan Xiang}
\address{School of Mathematical Sciences, Peking University,
Beijing 100871, P. R. China}
\email{shengquan.xiang@math.pku.edu.cn}

\begin{abstract}
We study the one-corotational harmonic map heat flow from the disk to the sphere with a Dirichlet boundary control.  We introduce a control-adapted gluing strategy to prevent blow-up. More precisely, we construct explicit upper profiles and well-prepared boundary traces such that every nonnegative datum below one of these profiles and having boundary value above $\pi$ blows up under the corresponding constant trace, whereas the corresponding solution is global under every control below the associated well-prepared boundary trace. Controls with uniformly bounded time derivative also prevent infinite-time concentration.  
We also show that boundary control can drive any nonnegative datum to blow-up before any prescribed time, whereas bounded controls cannot prevent suitably fast concentration.
\end{abstract}

\subjclass[2020]{35K58, 35B44, 58J35, 93C20}
\thanks{\textit{Keywords.} Harmonic map heat flow, boundary control, CDY profile, finite-time blow-up, infinite-time concentration, avoidance of blow-up,  control-adapted gluing strategy.}

\maketitle

\setcounter{tocdepth}{1}
\tableofcontents

\section{Introduction}\label{sec:introduction}

The harmonic map heat flow is the $L^2$-gradient flow of the Dirichlet energy.  The foundational existence and regularity theory was developed in \cite{Eells-Sampson-1964,Struwe-1985,Chen-Struwe-1989}; see the monograph of Lin and Wang \cite{Lin-Wang-2008} for an elegant exposition.  The first finite-time blow-up example was constructed in dimensions higher than two by Coron and Ghidaglia \cite{Coron-Ghidaglia-1989}.  Coron subsequently constructed initial and Dirichlet boundary data on $B^3$ for which the flow admits infinitely many weak solutions \cite{Coron-1990}.

The singularity theory was further developed through the blow-up and global-existence results of Chen and Ding \cite{Chen-Ding-1990} and the analysis of bubbling at singular times by Qing and Qing--Tian \cite{Qing-1995,Qing-Tian-1997}.  The energy identity at finite singular times was proved by Lin and Wang \cite{Lin-Wang-1998}.  For the related gradient estimates and blow-up analysis of stationary harmonic maps, we refer to Lin \cite{Lin-1999}.  Global solutions may also lose compactness through bubbling along sequences of times tending to infinity; see Topping \cite{Topping-2004}.  For maps from a disk to the sphere, Chang and Ding proved a global-existence result \cite{Chang-Ding-1991}, while Chang, Ding, and Ye discovered the sharp radial boundary threshold for finite-time blow-up \cite{Chang-Ding-Ye-1992}.
\vspace{2mm}

Recently, control problems for geometric evolution equations have been investigated in a variety of settings.  For the harmonic map heat flow, we refer to the external-field control in \cite{Liu-2020} and to small-time global controllability results between harmonic maps in \cite{Coron-Xiang-2025}; see also results on finite modes control \cite{Gussetti-2026} and on related Landau-Lifshitz-Gilbert equations \cite{Jiao-Tai-2026, Mukherjee-Fahim-Hausenblas-2026}.  
Controllability and stabilization of wave maps from a circle to a sphere were developed in \cite{Krieger-Xiang-2024,Coron-Krieger-Xiang-2025}, and the global relation between controllability and homotopy for general compact targets was established in \cite{Coron-Krieger-Xiang-arXiv-2025}. 

\smallskip
\smallskip

\begin{center}
\itshape Can boundary controls create or prevent blow-up in the harmonic map heat flow,\\ and if so, how?
\end{center}
\smallskip

We partially answer both parts of this question in the one-corotational setting. For the prevention of blow-up, we introduce a control-adapted gluing strategy as the main new ingredient.  We construct explicit upper profiles for which every nonnegative datum below the profile with boundary value above $\pi$ blows up under its constant trace, whereas every compatible trace in a $C^1$ family gives a global solution; controls with uniformly bounded time derivative also prevent infinite-time concentration.

For a different perspective on blow-up and control, we refer to Lin and Zaag \cite{Lin-Zaag-2022,Lin-Zaag-2025}, who study feedback control of blow-up points for scalar heat equations with internal control. See also Le Balc'h and Souplet \cite{Le-Balch-Souplet-2026} for internal global and regional blow-up controllability of weakly superlinear heat equation.
\vspace{2mm}

Let $\D:=\{x\in\R^2:|x|<1\}$.  For a one-corotational map $u:[0,T)\times\D\to\Sph$, we write
\begin{equation*}
u(t,r,\theta) =\bigl(\sin q(t,r)\cos\theta,\sin q(t,r)\sin\theta, \cos q(t,r)\bigr).
\end{equation*}
The harmonic map heat flow reduces to
\begin{equation}\label{eq:flow:radial}
\begin{cases}
q_t=q_{rr}+\dfrac1r q_r-\dfrac{\sin(2q)}{2r^2}, &(t,r)\in\mathbb R^+\times(0,1),\\
q(t,0)=0,\quad q(t,1)=g(t),&t\in\mathbb R^+,\\
q(0,r)=q_0(r),&r\in[0,1].
\end{cases}
\end{equation}
Here $g$ is the {\it Dirichlet boundary control}. The main part of the equation can be written as
\begin{equation}\label{eq:op:L}
q_t=\cL q  \textrm{ \; with \;}  \cL q:=q_{rr}+\frac1r q_r-\frac{\sin(2q)}{2r^2}.
\end{equation} 

We work with the one-corotational data space
\begin{equation}
X_{\mathrm{cor}}:=\left\{q\in C([0,1]): \; q(0)=0,\quad q(r)/r\ \text{extends continuously to }[0,1]\right\}.
\end{equation}
Here and below, a solution $q$ is called {\it classical} on $(0,T_{\max})$ if, for every $0<\tau<S<T_{\max}$, one has $q\in C^{1,2}\bigl([\tau,S]\times[0,1]\bigr)$,
and $q$ satisfies \eqref{eq:flow:radial} pointwise for $(t,r)\in[\tau,S]\times(0,1)$, together with the boundary conditions at $r=0,1$, and that $q$ attains the initial datum $q_0$ in $X_{\mathrm{cor}}$.
\smallskip
\smallskip

For such a solution, {\it finite-time blow-up} is characterized by
\begin{equation*}
T_{\max}<+\infty,
\qquad
\limsup_{t\to T_{\max}}
\|q_r(t,\cdot)\|_{L^\infty(0,1)}=+\infty.
\end{equation*}
A global classical solution develops {\it infinite-time concentration} if
\begin{equation*}
T_{\max}=+\infty,
\qquad
\limsup_{t\to+\infty}
\|q_r(t,\cdot)\|_{L^\infty(0,1)}=+\infty.
\end{equation*}

The solution framework used in this paper is formulated for initial data and boundary controls of lower regularity.
We therefore define the corresponding notions on well-posedness and blow-up for maximal mild solutions in Appendix~\ref{subsec:wp-mild}.  For classical solutions, these definitions agree with the usual notions above.
\smallskip

We first record two fundamental results; see \cite[Section~6.3]{Lin-Wang-2008}.
\begin{theorem} \label{thm:in:threshold}
Let $q_0\in X_{\mathrm{cor}}$ be a smooth one-corotational datum with $q_0(1)=b$, and consider the maximal solution of the constant-boundary problem associated with \eqref{eq:flow:radial}. 
\smallskip

\begin{enumerate}[label=\textup{(\roman*)},leftmargin=2.4em]
\item (Chang--Ding--Ye~\cite{Chang-Ding-Ye-1992}) If $b>\pi$, the solution blows up in finite time.
\smallskip

\item (Chang--Ding~\cite{Chang-Ding-1991} and Grayson--Hamilton~\cite{Grayson-Hamilton-1996}) If
$|q_0(r)|\leq\pi$ for every $r\in[0,1]$, the solution is global and classical for every positive time.
\end{enumerate}
\end{theorem}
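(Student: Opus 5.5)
Both parts rest on the comparison principle for the radial equation \eqref{eq:flow:radial} and on the explicit stationary solutions. The degree-one harmonic maps are $Q_\lambda(r)=2\arctan(\lambda r)$. With $f(q)=\sin(2q)/(2r^2)$, which is Lipschitz in $q$ on compact subsets of $r>0$, a classical sub- or supersolution that is ordered with $q$ at $r=1$ and at $t=0$ stays ordered. Near $r=0$ we use $q(t,r)=O(r)$, or we work with $q/r$, which satisfies a nondegenerate equation in the variable $\rho=r$ in $\mathbb R^4$.

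\textbf{Part (ii).} We first show $|q(t,r)|\le\pi$ for all $t$. The constants $\pm\pi$ are stationary, since $\sin(\pm2\pi)=0$. The comparison is degenerate at $r=0$, but there $q=0<\pi$, so the maximum principle applies on $(0,1]$ with the boundary value $|b|\le\pi$. Next we rule out blow-up. By Struwe/Chen--Struwe theory, a singularity at a time $T$ would have to occur at the origin, with a harmonic sphere bubbling off: after rescaling, $q(t_k,r_kr)\to Q_1(r)$ locally, or its negative or a $\pi$-shift. In the corotational class, the rescaled limit is a nonconstant profile whose values sweep out $[0,\pi)$ as $r$ runs over $(0,\infty)$. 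Energy and monotonicity considerations then force $q$ to exceed $\pi$ just outside the bubble scale, or to form a neck. The cleaner route, following Chang--Ding, uses comparison with $Q_{\lambda}$. Since $q_0$ is smooth and $q_0(r)=O(r)$, we have $q_0\le Q_\lambda$ on $[0,1]$ for some large $\lambda$ as long as $q_0<\pi$ away from $r=1$. The bound $q_0\le Q_\lambda$ is preserved as long as the boundary inequality $q(t,1)=b\le Q_\lambda(1)$ holds, which fails if $b=\pi$. In that case we use instead the barrier $\pi-$(a solution of the linearization around $\pi$), or we perturb. From $-Q_\lambda\le q\le Q_\lambda$ we get $|q_r(t,0)|\le\lambda$ uniformly in $t$. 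By parabolic regularity for $q/r$, this excludes concentration at the origin, and interior and boundary gradient bounds then yield global classical existence. Where $q_0$ touches $\pi$ at interior points, the strong maximum principle gives $|q(t)|<\pi$ on $[0,1)$ for $t>0$, so we may restart the argument at a small positive time. I expect the main obstacle to be the endpoint case $b=\pi$. There $Q_\lambda(1)<\pi$ for every $\lambda$, so the barrier must be modified near $r=1$. I would use the time-dependent supersolution $\min(\pi,Q_{\lambda(t)})$ glued to $\pi$, or else rely on Grayson--Hamilton's argument for that case.

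\textbf{Part (i).} This is the Chang--Ding--Ye threshold. Suppose the solution were global and classical. The value $q(t,1)=b>\pi$ forces the profile to cross $\pi$ at some radius $r(t)\in(0,1)$. A global classical solution has bounded energy, which is nonincreasing with constant boundary data, and by standard theory it converges subsequentially to a harmonic map with boundary value $b$. Any corotational harmonic map with $q(0)=0$ has the form $Q_\lambda$, whose range is contained in $[0,\pi)$, so no harmonic map attains the boundary value $b>\pi$. This is the contradiction, provided no infinite-time bubbling occurs. To handle bubbling, we instead compare with an explicit blow-up subsolution. We build a family $\underline q(t,r)=Q_{\mu(t)}(r)$ corrected near $r=1$, with $\mu$ solving an ODE $\mu'\gtrsim\mu^{2}$ driven by the excess $b-\pi$, so that $\mu$ reaches infinity in finite time. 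It is placed below $q$ after a short time, using the strong maximum principle to get a strict inequality. Then $q_r(t,0)\ge\mu(t)\to\infty$, which is blow-up. The obstacle in this part is constructing and verifying this subsolution: the correction term must be chosen so that $\cL\underline q-\underline q_t\ge0$. This mirrors the original CDY computation, and I would follow it.
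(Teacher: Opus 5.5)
The paper does not prove this theorem; it quotes it from Chang--Ding--Ye, Chang--Ding and Grayson--Hamilton. Your sketch therefore has to stand on its own, and each part has a genuine gap.

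\textbf{Part (ii).} Comparison with $\pm Q_\lambda$ settles $|b|<\pi$; this is the argument of Proposition~\ref{prop:glo-con}. The statement, however, includes $|b|=\pi$, and none of your proposed fixes works there.

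Since $Q_\lambda<\pi$ on $[0,1]$, the function $\min(\pi,Q_{\lambda(t)})$ equals $Q_{\lambda(t)}$ and never reaches the trace $\pi$. More fundamentally, no barrier can bound $q/r$ uniformly in time. With constant trace, such a bound gives uniform gradient bounds for $t\ge1$ (as in the proof of Proposition~\ref{prop:glo-con}). Energy dissipation then produces a stationary limit, that is, a smooth corotational harmonic map with $q(0)=0$ and $q(1)=\pi$, and no such map exists. For $b=\pi$ the flow in fact concentrates as $t\to\infty$.

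The bubbling paragraph gives no contradiction either. A bubble $\pm2\arctan(r/\ell)$ takes values in $(-\pi,\pi)$, so it is compatible with $|q|\le\pi$, and ``$q$ must exceed $\pi$ or form a neck'' is exactly what would need proof.

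What is missing is localization on finite time intervals:
\begin{itemize}
\item Pick $r_1\in(0,1)$ with $|q_0|<\pi$ on $(0,r_1]$.
\item Suppose $T_{\max}<\infty$. The bound $|q|\le\pi$ and regularity away from the origin make $q(\cdot,r_1)$ continuous up to $T_{\max}$.
\item The strong maximum principle for $\pi\mp q\ge0$ then gives $\max_{[0,T_{\max}]}|q(\cdot,r_1)|<\pi$.
\item Compare on $(0,r_1)$, as the paper does on $(0,R)$ in Section~\ref{sec:un-blow}, with $\pm Q_\rho$ for $\rho$ small (paper's normalization $Q_\rho(r)=2\arctan(r/\rho)$). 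This bounds $q/r$ on $[0,T_{\max})$ and contradicts Proposition~\ref{prop:loc-wp}\textup{(ii)}.
\end{itemize}
This covers every $|b|\le\pi$ and also makes your restart step unnecessary.

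\textbf{Part (i).} First, the subsolution you describe has the wrong sign. If the correction lives only near $r=1$, then in the core your $\underline q=Q_{\mu(t)}$ (with $Q_\mu(r)=2\arctan(\mu r)$ and $\mu$ increasing) satisfies $\underline q_t>0=\cL\underline q$, which is the supersolution inequality.

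In the CDY profile $Q_{\ell(t)}(r)+Q_\mu(r^{1+\varepsilon})$ of Proposition~\ref{lem:cdy-pro}, with $\mu$ fixed, the correction reaches the origin and gives $\cL\underline q\gtrsim r^{\varepsilon-1}/\mu$. At $r\sim\ell$ this absorbs $\partial_tQ_{\ell(t)}\sim|\ell'|/\ell$ only if $-\ell'\le(\kappa/\mu)\ell^\varepsilon$. For your inverse scale $1/\ell$ this is an upper bound, $(1/\ell)'\lesssim(1/\ell)^{2-\varepsilon}$. Your rate $(1/\ell)'\gtrsim(1/\ell)^2$ means $-\ell'\gtrsim1$, which breaks the inequality. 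The excess $b-\pi$ enters only through the boundary ordering $Q_\mu(1)\le b-\pi$.

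Second, the strong maximum principle upgrades an existing ordering but cannot create one. For $q_0\ge0$ it does suffice: $V(t_0,\cdot)$ has a positive minimum, so $q(t_0,r)\ge cr$. Then $Q_{\ell_0}(r)+Q_\mu(r^{1+\varepsilon})\le2r/\ell_0+2r/\mu$ fits below for $\ell_0,\mu$ large; this is the preparation in Section~\ref{sec:ind-blow}.

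The statement, however, allows sign-changing data. If $q_0<0$ near the origin, continuity of $q/r$ keeps $q(t_0,\cdot)<0$ near the origin for small $t_0$, so no positive CDY profile lies below it. This is the order-preparation obstruction the paper itself points out after Theorem~\ref{thm:ind-blow}. As written, your part (i) covers only nonnegative data, which is the only case the paper actually uses, in Theorem~\ref{thm:main}\textup{(i)}. The general case needs an extra argument placing the solution above a CDY profile at some later time, or the original CDY proof.
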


The second conclusion excludes finite-time blow-up, but it does not exclude infinite-time  concentration.  This distinction is part of the motivation for the uniform-in-time conclusion of our main result.
\smallskip

The $k$-equivariant harmonic map heat flow has been extensively studied in the literature. The formation and scale of a vanishing bubble were studied in \cite{Grayson-Hamilton-1996,van-den-Berg-King-Hulshof-2003,Angenent-Hulshof-Matano-2009}, while global-existence and blow-up criteria were developed in \cite{Guan-Gustafson-Tsai-2009}.  Stability, concentration, oscillation, quantized blow-up rates, and higher-equivariance dynamics were subsequently investigated in \cite{Gustafson-Nakanishi-Tsai-2010,Raphael-Schweyer-2013,Raphael-Schweyer-2014,Davila-delPino-Wei-2020,Kim-Merle-2025}.  Bubble decomposition was obtained in the equivariant setting by Jendrej and Lawrie \cite{Jendrej-Lawrie-2023}, and for general flows from $\mathbb R^2$ to $\mathbb S^2$ by Jendrej, Lawrie, and Schlag \cite{Jendrej-Lawrie-Schlag-2025}.

The recent work of Kim and Merle gives a classification of global dynamics in higher equivariance classes \cite{Kim-Merle-2025}.  Along this direction, Kim ruled out bubble trees for finite-energy one-equivariant flows \cite{Kim-2026}.  For time-dependent boundary data, Samuelian obtained a one-bubble description of finite-time singularities and constructed infinite-time blow-up in every equivariance class \cite{Samuelian-2026}, he further excluded finite-time blow-up for $k\geq2$ in \cite{Samuelian-arXiv-2026}.
\smallskip

\subsection{A common datum with opposite boundary-driven dynamics}\label{subsec:opp-dyn}

We first define the {\it stationary harmonic maps},
\begin{equation}\label{eq:in:harmap}
Q_\rho(r):=2\arctan\left(\frac r\rho\right), \quad \forall \rho\in(0,+\infty).
\end{equation}
We also fix $R:=1/10$.  Let $\psi$ be the fixed sign-changing annular mode constructed in Lemma~\ref{lem:annular-mode}, and let $T_1=1/5$.  For $\lambda>0$, we introduce the explicit \emph{upper initial profile},
\begin{equation}\label{eq:in:uppro}
\overline q_{\lambda,0}(r):=
\begin{cases}
Q_\lambda(r),& \forall r\in[0,R],\\
Q_\lambda(r)+3\lambda\psi(r),& \forall r\in(R,1],
\end{cases}
\end{equation}
with the boundary value
\begin{equation}\label{eq:in:upbou}
b_\lambda:=\overline q_{\lambda,0}(1) =Q_\lambda(1)+3\lambda.
\end{equation}

We also introduce the smooth nonnegative boundary trace,
\begin{equation}\label{eq:ma:control}
g_\lambda(t):=Q_\lambda(1)+3\lambda e^{-30t} \textrm{ \; satisfying \;}   g_\lambda(0)=b_\lambda,\;  \|g_\lambda-b_\lambda\|_{L_t^\infty(0,\infty)}\leq 3\lambda.
\end{equation}
The same barrier also controls a family of boundary traces.  Define
\begin{equation}\label{eq:ma:concla}
\mathcal G_\lambda:=\left\{g\in C^1_{\mathrm{loc}}([0,+\infty)): \; 
\begin{aligned}
&0\leq g(t)\leq g_\lambda(t),&& \forall t\in[0,T_1],\\
&0\leq g(t)\leq g(T_1),&& \forall t\in[T_1,+\infty)
\end{aligned}
\right\}.
\end{equation}
\smallskip

We now state the first main result.
\begin{theorem}\label{thm:main}
There exists $\lambda_*>0$ such that, for every $\lambda\in(0,\lambda_*)$, the profile $\overline q_{\lambda,0}$ is positive on $(0,1]$ and $b_\lambda>\pi$.  Let the initial data $q_0\in X_{\mathrm{cor}}$ satisfy
\begin{equation}\label{eq:ma:data}
0\leq q_0(r)\leq\overline q_{\lambda,0}(r) \quad \forall r\in[0,1], \textrm{ \; and } b_0:=q_0(1)\in(\pi,b_\lambda].
\end{equation}

Then the following opposite boundary-driven dynamics hold.
\begin{enumerate}[label=\textup{(\roman*)},leftmargin=2.4em]
\item The maximal mild solution with the constant trace $b_0$ blows up in finite time.
\smallskip

\item For every $g\in\mathcal G_\lambda$ satisfying $g(0)=b_0$, the corresponding maximal mild solution $q_\lambda^g$ is global, thus the solution does not blow-up in finite-time.  At time $T_1=1/5$, it satisfies $0\leq q_\lambda^g(T_1,r)<\pi$ for every $r\in[0,1]$.
If, in addition, one has
\begin{equation}\label{eq:ma:unider}
\sup_{t\geq T_1}|g'(t)|<+\infty,
\end{equation}
then, for every $\tau>0$ there is
\begin{equation}\label{eq:ma:unigra}
\sup_{t\geq\tau}\|(q_\lambda^g)_r(t,\cdot)\|_{L^\infty(0,1)}<+\infty.
\end{equation}
In particular, the solution does not develop infinite-time concentration.
\end{enumerate}
\end{theorem}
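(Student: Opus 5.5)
Part (i) is immediate from Theorem~\ref{thm:in:threshold}(i), once the smooth-data theorem is transferred to the mild setting. Since $b_0>\pi$, the constant-trace problem blows up in finite time for smooth data. For a general $q_0\in X_{\mathrm{cor}}$, I would use the instantaneous parabolic smoothing of the maximal mild solution from Appendix~\ref{subsec:wp-mild}: at any small positive time the solution is classical, and its trace is still $b_0$. Restarting the flow from that time and applying the Chang--Ding--Ye theorem then gives finite-time blow-up. The preliminary claims $\overline q_{\lambda,0}>0$ on $(0,1]$ and $b_\lambda>\pi$ follow from the explicit formulas. As $\lambda\to0$ we have $Q_\lambda(1)=2\arctan(1/\lambda)=\pi-2\lambda+O(\lambda^3)$, hence $b_\lambda=\pi+\lambda+O(\lambda^3)>\pi$. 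On $(R,1]$, the term $Q_\lambda\ge Q_\lambda(R)\approx\pi$ dominates $3\lambda|\psi|$ once $\lambda$ is small, provided $\psi$ is bounded (Lemma~\ref{lem:annular-mode}).

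For part (ii) the strategy is a comparison (maximum-principle) argument with an explicit supersolution $\overline q_\lambda(t,r)$ on $[0,T_1]\times[0,1]$. It should satisfy $\overline q_\lambda(0,\cdot)=\overline q_{\lambda,0}$, boundary value $\overline q_\lambda(t,1)=g_\lambda(t)$, and $\overline q_\lambda(T_1,\cdot)<\pi$. Following the gluing structure, I would take
\[
\overline q_\lambda(t,r)=Q_\lambda(r)\ \text{on } [0,R], \qquad \overline q_\lambda(t,r)=Q_\lambda(r)+3\lambda e^{-30t}\psi(r)\ \text{on } (R,1].
\]
I expect Lemma~\ref{lem:annular-mode} to be set up exactly for this purpose: $\psi(R)=0$, $\psi(1)=1$, $\psi$ is sign-changing, and $\mathcal L_{\mathrm{lin}}\psi\le -30\psi$ for the linearization of $\mathcal L$ at $Q_\lambda$ (approximately $\partial_{rr}+\frac1r\partial_r-\frac{\cos 2Q_\lambda}{r^2}$, which is close to $\partial_{rr}+\frac1r\partial_r-\frac1{r^2}$ on $(R,1)$). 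Three checks are then needed:
\begin{itemize}
\item \emph{Supersolution inequality.} Linearize $\sin 2(Q_\lambda+\varepsilon\psi)$, where $\varepsilon=3\lambda e^{-30t}$, and absorb the quadratic error $O(\varepsilon^2)/r^2$ on $r\ge R$ for $\lambda$ small.
\item \emph{Corner condition at $r=R$.} A glued function is a viscosity/weak supersolution if its derivative jump there is nonpositive, i.e. $\psi'(R^+)\le0$. I expect the sign-changing mode to be chosen with $\psi'(R)<0$, dipping negative near $R$ before rising to $1$ at $r=1$.
\item \emph{Inner region.} On $[0,R]$, $Q_\lambda$ is an exact solution.
\end{itemize}
Then $q_0\le\overline q_{\lambda,0}$ and $0\le g\le g_\lambda$ on $[0,T_1]$. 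Combined with the comparison principle for mild solutions (and $0$ as a subsolution, since $\sin(2\cdot0)=0$), this gives $0\le q^g_\lambda\le\overline q_\lambda$ on the existence interval within $[0,T_1]$.

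The main obstacle is reaching $\overline q_\lambda(T_1,\cdot)<\pi$ while also ruling out blow-up before $T_1$. Blow-up before $T_1$ would require concentration at $r=0$, but near the origin $q\le Q_\lambda$. I would argue that this prevents the gradient from blowing up, using the comparison $0\le q\le Q_\lambda(r)\le 2r/\lambda$, so $q/r$ stays bounded, combined with the one-bubble/energy characterization of singularities (which needs $q$ to reach $\pi$ near $0$ at small scales). For the value at $T_1$: on $[0,R]$ we have $Q_\lambda<\pi$. On $(R,1]$ we have $Q_\lambda+3\lambda e^{-6}\psi\le \pi-2\lambda r^{-1}\cdot(1+o(1))+3e^{-6}\lambda$, which is $<\pi$ since $3e^{-6}<2$. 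This inequality is the reason for the rate $30$ and the time $T_1=1/5$. Strict inequality at the boundary follows from $g(T_1)\le g_\lambda(T_1)<\pi$.

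For $t\ge T_1$, $q(T_1)<\pi$ and $g(t)\le g(T_1)<\pi$. The constant $\pi$ is then a supersolution, so $0\le q\le \max(\|q(T_1)\|_\infty,g(T_1))=:\pi-\delta$ for all later times. This gives global existence as in Theorem~\ref{thm:in:threshold}(ii), now with time-dependent trace. For \eqref{eq:ma:unigra}, the uniform gap $\delta>0$ below $\pi$ excludes bubbling, since a bubble needs $q$ to range over $[0,\pi]$. I would make this quantitative as follows. First, with the bound on $g'$, subtract a lift of $g$ (e.g. $g(t)\,r$) to get homogeneous boundary data and a bounded forcing. Second, derive uniform local energy bounds and $\varepsilon$-regularity on unit time windows. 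Third, handle the origin by the barrier $q\le (\pi-\delta)$ together with a sub-bubble comparison showing $q(t,r)\le C r$ uniformly in $t\ge\tau$. For that comparison I would use $Q_\mu$-type supersolutions, e.g. $\min(Q_\mu,\pi-\delta)$ with a fixed $\mu$, which is admissible because $Q_\mu$ is stationary and $Q_\mu(r)>\pi-\delta$ for $r$ away from $0$ once $\mu$ is small. Fixed-scale parabolic estimates on the lifted equation then give the uniform gradient bound.
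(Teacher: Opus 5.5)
Your part~(i) and the checks $b_\lambda>\pi$ and $\overline q_{\lambda,0}>0$ match the paper. Part~(ii) has a genuine gap at its central step.

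\textbf{The annular barrier you use is not a supersolution.} On $(R,1)$ you take $\overline q=Q_\lambda+\varepsilon(t)\psi$ with $\varepsilon(t)=3\lambda e^{-30t}$, and you plan to ``absorb the quadratic error''. But Lemma~\ref{lem:annular-mode} gives $\psi$ as an exact eigenfunction, $\cA\psi=-30\psi$, of the linearization at $\pi$. Since the decay rate is also exactly $30$, one has $(\partial_t-\cA)(\varepsilon\psi)\equiv0$, so there is no positive main term into which anything could be absorbed. Using $\cL Q_\lambda=0$, the residual is exactly
\[
\overline q_t-\cL\overline q=\frac1{r^2}\Bigl[\frac{\sin(2Q_\lambda+2\varepsilon\psi)-\sin(2Q_\lambda)}2-\varepsilon\psi\Bigr]=-\frac2{r^2}\int_0^{\varepsilon\psi}\sin^2(Q_\lambda+s)\dd s .
\]
This is strictly negative, of size $\lambda^3e^{-30t}$, wherever $\psi>0$. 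So the glued function is a strict \emph{sub}solution precisely on the part of the annulus that lifts the trace above $\pi$, and the comparison $q^g_\lambda\le\overline q$ does not follow. Your expected inequality $\mathcal L_{\mathrm{lin}}\psi\le-30\psi$ fails for the same reason: $\mathcal L_{\mathrm{lin}}=\cA+(1-\cos 2Q_\lambda)/r^2$, so $\mathcal L_{\mathrm{lin}}\psi>-30\psi$ on $\{\psi>0\}$.

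The paper treats $Q_\lambda+\lambda w_*$ only as an approximate barrier. It replaces the outer piece by the \emph{exact} solution $\overline q_{\lambda,\out}=Q_\lambda+\lambda w_\lambda$ of the annular Dirichlet problem with the same data: initial value $Q_\lambda+3\lambda\psi$, value $Q_\lambda(R)$ at $r=R$, and trace $g_\lambda$ at $r=1$. This piece is a supersolution for free, and the work moves to Lemma~\ref{lem:non-ann-con}. There, Duhamel plus Gronwall give $\|w_\lambda-w_*\|_{L^\infty}=O(\lambda^2)$ on $[0,T_1]\times[R,1]$. The boundary-gradient bound $|\partial_rS(t)\varphi(R)|\le Ct^{-1/2}\|\varphi\|_{L^\infty}$ gives $\partial_rw_\lambda(t,R)\to\partial_rw_*(t,R)$ uniformly. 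Hence the interface jump stays $\le-\tfrac12\lambda c_{T_1}<0$, and the terminal estimate survives with a margin. This is also why the terminal check needs a bound on $\max_{[R,1]}r\psi_+$ with room to spare; Lemma~\ref{lem:annular-mode} gives $3e^{-6}M_\psi<1/4$. Your check silently uses $\psi\le1$, which you have not justified, and it leaves no room for the error $w_\lambda-w_*$.

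\textbf{After $T_1$.} A constant $c\in(\pi/2,\pi)$ satisfies $c_t-\cL c=\sin(2c)/(2r^2)<0$, so it is a strict subsolution. Consequently the bound $q\le\pi-\delta$ is not propagated, and $\min(Q_\mu,\pi-\delta)$ is not a supersolution. Comparison with $\pi$ gives at best $q\le\pi$. Passing from that to global existence for a time-dependent trace is an unproved extension of Theorem~\ref{thm:in:threshold}(ii).

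The fix is to drop the cap. Since $\max q(T_1)<\pi$ and $q(T_1)/r$ is bounded, a single stationary $Q_\rho$ with $\rho$ small dominates $q(T_1)$ on $[0,1]$, and $g(t)\le g(T_1)<Q_\rho(1)$. Weak comparison then gives $q/r\le 2/\rho$ for all $t\ge T_1$, hence $T_{\max}=+\infty$ by the continuation alternative. Under \eqref{eq:ma:unider}, the Dirichlet semigroup gradient estimate for $U=V-g$ on unit time windows then yields \eqref{eq:ma:unigra} directly (Proposition~\ref{prop:glo-con}); no energy or $\varepsilon$-regularity argument is needed. Likewise, before $T_1$ the bound on $q/r$ already excludes blow-up through the continuation alternative, without any bubble characterization.
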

An explicit smooth control trace example can be chosen as
\begin{equation}\label{eq:ma:adcon}
g_{\lambda,b_0}(t):=Q_\lambda(1)+\bigl(b_0-Q_\lambda(1)\bigr)e^{-30t}.
\end{equation}
It belongs to $\mathcal G_\lambda$ and produces a solution that is classical for every positive time.

\begin{figure}[H]
\centering
\begin{tikzpicture}[
  scale=.69,
  transform shape,
  >=Latex,
  node distance=7mm and 12mm,
  every node/.style={align=center},
  box/.style={draw,rounded corners,inner sep=4pt}
]
\node[box] (datum) {any admissible datum\\
$0\leq q_0\leq\overline q_{\lambda,0}$, $b_0=q_0(1)>\pi$};
\node[box,below left=of datum] (fixed) {constant trace\\$b_0>\pi$};
\node[box,below right=of datum] (control) {well-prepared trace\\$g\in\mathcal G_\lambda$, $g(0)=b_0$};
\node[box,below=of fixed] (blowup) {CDY theorem\\finite-time blow-up};
\node[box,below=of control] (barrier) {glued annular barrier\\
$q(T_1,\cdot)<\pi$};
\node[box,below=of barrier] (regular) {global  solution\\bounded gradient if $\sup|g'|<\infty$};
\draw[->] (datum) -- (fixed);
\draw[->] (datum) -- (control);
\draw[->] (fixed) -- (blowup);
\draw[->] (control) -- (barrier);
\draw[->] (barrier) -- (regular);
\end{tikzpicture}
\end{figure}

An interesting feature of the result is a boundary-driven phase transition: for this concentrating family, an $O(\lambda)$ perturbation of the boundary trace separates finite-time blow-up from global smoothness. 
Both the upper profile and the boundary perturbation depend on $\lambda$. We do not assert that an arbitrarily small control prevents blow-up for one fixed datum.

\begin{remark} \label{rem:in:mil-cla}
At the minimal regularity level, finite-time blow-up in the preceding theorem refers to the maximal mild solution in Definition~\ref{def:wp:blowup} for control $g\in C_{\mathrm{loc}}^{1}([0,+\infty))$.  If, in addition,  for some $\alpha\in(0,1)$, the control  $g\in C_{\mathrm{loc}}^{1+\alpha/2}((0,+\infty))$,
then Proposition~\ref{prop:wp-reg}\textup{(b)} shows that the mild solution is classical for every positive time before $T_{\max}$.  Remark~\ref{rem:wp:cla-blow} then identifies finite maximal lifespan with  classical gradient blow-up. 
\end{remark}

\medskip

\noindent\textbf{The major difficulty.} 
The initial data in Theorem~\ref{thm:main} already belong to a finite-time blow-up regime under the constant boundary trace, whereas the controlled evolution must remain regular for all time. 
Moreover, the data can highly concentrate near the origin. A fundamental difficulty is that the control acts only at the boundary, whereas blow-up occurs at the origin.  Thus, the control region is spatially separated from the singularity. This is a global control problem highly emphasizing nonlinear behaviors. A direct perturbative linearization argument is not sufficient: concentration takes place at the origin, the radial equation is singular there, and the control acts only at the outer boundary. 

\medskip

\noindent\textbf{The control-adapted gluing strategy.} At a broader level, we introduce a control-adapted gluing strategy that connects boundary control and blow-up. The question is whether the flexibility of the boundary control can alter concentration once the inner and outer dynamics are separated at a fixed interface.  In our construction, the inner region is an exact stationary harmonic map, while the boundary-control flexibility generates an exact nonlinear evolution on the outer annulus.  The two pieces are continuous but have a favorable derivative jump, so their union remains a weak upper solution.  We expect that the same control-adapted gluing   strategy may also be useful for other problems concerning the creation or prevention of blow-up through control. See Section \ref{sec:comparisons} for details.

\subsection{Boundary controls producing arbitrarily fast blow-up}\label{subsec:ind-blow}

Boundary control can also act in the opposite direction.  The next theorem shows that a preparatory positive boundary control can create arbitrarily fast blow-up for any nonnegative initial data.

\begin{theorem} \label{thm:ind-blow}
Let $q_0\in X_{\mathrm{cor}}$ be nonnegative.  For every prescribed $\mathcal T>0$, there exists a smooth nonnegative Dirichlet boundary control $g$, with $g(0)=q_0(1)$, such that the corresponding maximal mild solution of \eqref{eq:flow:radial} blows up at some time $T_*<\mathcal T$.  The solution is classical for every positive time before $T_*$.
\end{theorem}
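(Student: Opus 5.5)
The plan is to build the control in two phases. In a short preparatory phase the boundary value is raised above $\pi$. After that it is frozen, and we show the resulting solution must blow up quickly. Fix $\mathcal T>0$ and set $b_0:=q_0(1)\geq 0$. Take a smooth nondecreasing $g$ with $g(0)=b_0$ that increases to a large constant $B>\pi$ by time $\delta<\mathcal T/2$ and stays equal to $B$ afterwards. The parameters $B$ and $\delta$ are fixed below. Because $q_0\geq 0$ and $g\geq0$, the comparison principle for mild solutions (with $0$ as a solution) gives $q\geq 0$ for as long as the solution exists. If blow-up already occurs before $\delta$, there is nothing left to prove. Otherwise, comparing with the solution $\underline q$ that starts from $\underline q(0,\cdot)=0$ under the same control, we get $q(t,\cdot)\geq\underline q(t,\cdot)$. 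This lower solution does not depend on $q_0$ except through the boundary compatibility $g(0)=b_0$. We may also keep $g\geq \min(b_0,\cdot)$ and use a lower trace that starts at $0$, since comparison with respect to the boundary data is monotone as well.

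The core of the argument is a quantitative version of the Chang--Ding--Ye blow-up mechanism that gives a lifespan bound. Recall why the Dirichlet value $b>\pi$ forces blow-up in Theorem~\ref{thm:in:threshold}(i). A global regular solution would remain in the range where $q(t,r)<\pi$ for small $r$, since $q(t,0)=0$ and regularity prevents a jump. A subsolution argument then shows that the profile cannot stay regular. The scaling-based form I would use is as follows. The flow is invariant under $q(t,r)\mapsto q(\mu^2t,\mu r)$. Let $w$ be any fixed lower solution on the unit disk with boundary value $b_1\in(\pi,B)$ and initial value below $\underline q(\delta,\cdot)$ on a subdisk. Then a rescaled copy $w(\mu^{-2}t,\mu^{-1}r)$ on the disk of radius $\mu$ blows up by time $\mu^2T_w$, where $T_w$ is the lifespan of $w$. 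The target is therefore to show that at time $\delta$ the lower solution satisfies $\underline q(\delta,\mu)\geq b_1>\pi$ for some small $\mu$, while also dominating the rescaled data $w(0,r/\mu)$ on $[0,\mu]$. Comparison on $[0,\mu]$ then yields $T_*\leq \delta+\mu^2T_w<\mathcal T$.

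The main obstacle is producing $\underline q(\delta,\mu)>\pi$ with $\mu$ small. Values above $\pi$ have to be pushed inward against the nonlinearity. Near $r=0$ the equation behaves like the linear operator $\partial_{rr}+r^{-1}\partial_r-r^{-2}$, and the level $\pi$ is the first nontrivial equilibrium. I would handle this with explicit barriers. Take $B=3\pi$ or larger, and work on an annulus $[\mu,1]$ on which the term $\sin(2q)/(2r^2)$ is bounded by $C\mu^{-2}$. Compare with a linear heat subsolution, namely the solution of the radial heat equation with boundary value $B$ at $r=1$ minus a correction of size $C\mu^{-2}t$. For $\delta$ chosen small but fixed, this gives $\underline q(\delta,r)\geq B/2$ on $[\mu_0,1]$ for some $\mu_0<1$, once $B$ is large. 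An alternative is to use the stationary lower barriers $Q_\rho(r)+k\pi$ type profiles. Then take $\mu:=\mu_0$. Since we may shrink $\delta$ and enlarge $B$ freely, we can ensure $\delta+\mu^2T_w<\mathcal T$. This works if $T_w$ is arranged uniformly, and the uniformity comes from choosing $w$ once and for all through Theorem~\ref{thm:in:threshold}(i) applied with zero initial data and boundary $b_1$, which reduces the argument to a single fixed blow-up solution.

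Finally, the comparison between $w$ and $q$ needs care because the rescaled $w$ starts from zero data and has a constant trace. One has to check that, after the smoothing of its boundary jump, it stays below $q$ on $\partial B_\mu$, which holds as long as $q(t,\mu)\geq b_1$ for $t\geq\delta$. This persistence follows from the same annular barrier, now continued in time with the frozen boundary value $B$. Classical regularity for all positive times before $T_*$ comes from Proposition~\ref{prop:wp-reg}(b), since $g$ is smooth. Blow-up of $w$ then forces blow-up of $q$ through Remark~\ref{rem:wp:cla-blow}. The reason is that $q\geq w$ with $q(t,0)=w(t,0)=0$ bounds $q_r(t,0)\geq w_r(t,0)$, and $w$ blows up through $w_r(t,0)\to\infty$.
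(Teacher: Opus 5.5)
Your route differs from the paper's and can be made rigorous, but three steps do not work as written.

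\emph{The annular barrier.} A radial heat solution with data only at $r=1$ cannot be compared with $q$ on $[\mu,1]$, because at the inner edge you only know $q\geq0$. The repair:
\begin{enumerate}
\item[(a)] Write $g=b_0+(B-b_0)\eta$ with a fixed nondecreasing profile $\eta$.
\item[(b)] Let $h$ solve $h_t=h_{rr}+h_r/r$ on $(\mu/4,1)$ with $h(0)=0$, $h(t,\mu/4)=0$ and $h(t,1)=g(t)-b_0$.
\item[(c)] Compare $q$ with $h-8t/\mu^2$ on $[\mu/4,1]$. This is a subsolution there because $|\sin(2q)|/(2r^2)\leq 8/\mu^2$.
\item[(d)] Since the trace is nondecreasing, $h$ is nondecreasing in time. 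So for $r\in[\mu/2,\mu]$ and $t\in[\delta,\delta+\mu^2T_w]$ you get $q(t,r)\geq(B-b_0)c_{\mu,\delta}-8\delta/\mu^2-8T_w$, with $c_{\mu,\delta}>0$ independent of $B$. The bound is of this form, not $q\geq B/2$.
\end{enumerate}
This one bound gives both the initial ordering against the rescaled datum and the persistence of $q(t,\mu)\geq b_1$ over the whole collapse window.

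\emph{The order of choices.} Choose the parameters in this order:
\begin{enumerate}
\item[(a)] First fix $w$, with smooth compatible data $0\leq w(0,\cdot)\leq b_1$ vanishing on $[0,1/2]$, and hence $T_w$. Zero data with trace $b_1$ is not covered by Theorem~\ref{thm:in:threshold}(i).
\item[(b)] Then choose $\mu$ with $\mu^2T_w<\mathcal T/2$.
\item[(c)] Then choose $\delta<\mathcal T/2$.
\item[(d)] Only then take $B$ so large that $(B-b_0)c_{\mu,\delta}>b_1+8\delta/\mu^2+8T_w$.
\end{enumerate}
Setting $\mu:=\mu_0$, with $\mu_0$ produced by the barrier, is circular: nothing then makes $\mu^2T_w$ small.

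\emph{The transfer of blow-up.} The CDY theorem does not give $w_r(t,0)\to\infty$. Instead, let $w_\mu(t,r)=w((t-\delta)/\mu^2,r/\mu)$ and use $q\geq w_\mu\geq0$ on $[0,\mu]$. This gives $\|q(t)/r\|_{L^\infty(0,1)}\geq\mu^{-1}\|w(s)/r\|_{L^\infty(0,1)}$ with $s=(t-\delta)/\mu^2$. Then invoke the continuation alternative of Proposition~\ref{prop:loc-wp}(ii) for $w$, via Remark~\ref{rem:wp:cla-blow}. The auxiliary solution from zero data is unnecessary; compare $q$ with the barrier directly.

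The paper needs no interface at all. It works with $v=q/r$ on $B^4$, where the reaction satisfies $F(r,z)\geq0$ for $z\geq0$. Hence $V-AW$ is a supersolution of the linear heat equation, with $W$ the caloric function on $B^4$ having zero data and boundary value $\eta$. The control $q_0(1)+A\eta$ therefore gives $q(t_1,r)\geq Ac_{t_1}r$ on the entire disk, including near the origin. This places $q$ directly above the explicit CDY subsolution of Proposition~\ref{lem:cdy-pro}, with arbitrarily small initial scale $\lambda_0$ and collapse time of order $\sqrt{\lambda_0}$. Blow-up then follows from $q(t,\ell(t))\geq\pi/2$.

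In your $q$-variable the term $\sin(2q)/(2r^2)$ has no sign. That is exactly what keeps you away from $r=0$, costs the $t/\mu^2$ loss, and forces you to control an interface at $r=\mu$ over the collapse window, much as in the paper's proof of Theorem~\ref{thm:bou-con}. In exchange, your argument uses the CDY theorem only qualitatively, with the smallness supplied by parabolic scaling rather than by the quantitative subsolution. The paper's argument is shorter and gives an explicit lifespan bound.
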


The proof is based on two observations.  First, a CDY profile starting from a initial scale $\lambda_0$ can blow up after time $O(\sqrt{\lambda_0})$.  Second, in any prescribed positive time, a sufficiently large smooth boundary control raises $q/r$ uniformly and thereby places the solution above such a profile.

Note that sign assumption is essential to the present result.  For an arbitrary sign-changing profile, a positive boundary control does not immediately place the solution in the positive order cone.  Extending Theorem~\ref{thm:ind-blow} to that class would require a new order-preparation lemma or a more flexible lower profile.

\subsection{Unavoidable blow-up under bounded boundary controls} \label{subsec:bou-con}

Theorem~\ref{thm:ind-blow} asserts arbitrarily fast controlled blow-up only in the unbounded control class. There is also a converse obstruction when the size of the boundary control is limited.

\begin{theorem}\label{thm:bou-con}
For every $\varepsilon_*>0$, $M> 0$, and $\mathcal T>0$, there exists a smooth nonnegative one-corotational profile $q_0$, vanishing in a neighborhood of $r=1$, such that $\|q_0\|_{L^\infty(0,1)}\leq\pi+\varepsilon_*$. 
For every $g\in C^1_{\mathrm{loc}}([0,+\infty))$ satisfying $g(0)=0$  and $\|g\|_{L^\infty(0,\mathcal T)}\leq M$, the corresponding maximal mild solution of \eqref{eq:flow:radial} blows up before $\mathcal T$.
\end{theorem}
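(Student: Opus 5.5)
We build $q_0$ as a highly concentrated CDY-type profile near the origin that is cut off to $0$ well inside the disk. We then argue that a bounded boundary control cannot reach the origin in time. Fix $\varepsilon_*,M,\mathcal T$. The mechanism is a \emph{subsolution} (lower barrier) living on a small ball $[0,\delta]$. This subsolution blows up in time $O(\sqrt{\lambda_0})\ll\mathcal T$, in the spirit of the CDY argument behind Theorem~\ref{thm:in:threshold}(i) and the first observation quoted after Theorem~\ref{thm:ind-blow}. Its boundary value at $r=\delta$ is kept below what the true solution must have there, whatever the control $g$.

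The key steps are as follows.

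First, we control the solution on the interface $r=\delta$ from below, uniformly in $g$. Since $g\ge$ something is not assumed, we use only $|g|\le M$ on $(0,\mathcal T)$. We compare $q$ with the solution $\underline q$ having boundary trace $-M$ and the same initial datum. Comparison holds for \eqref{eq:flow:radial} because the nonlinearity is locally Lipschitz on bounded ranges and the solutions stay bounded up to the times considered. By parabolic smoothing, $\underline q(t,\delta)$ stays above $q_0(\delta)-\eta$ for $t\le t_0$. Here $t_0$ is small depending on $M,\delta,\eta$: the effect of the boundary is exponentially small, of order $e^{-c(1-\delta)^2/t}$. One could equally use a half-disk sub-barrier.

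Second, we build the inner subsolution. On $[0,\delta]$ we take a CDY-type lower profile $w$ with $w(0)=0$, $w(\delta)=\pi+\varepsilon_*/2$, and $w$ equal to $Q_{\lambda_0}$-like near $0$. Rescaling $r\mapsto r/\delta$ reduces this to Theorem~\ref{thm:in:threshold}(i) on the unit disk with constant boundary value $\pi+\varepsilon_*/2>\pi$. The resulting flow $w(t)$ blows up at a time $T_w$. Because the constructed profile concentrates at scale $\lambda_0$, I would use the explicit CDY sub-solution of the form $Q_{\mu(t)}$ perturbed, with $\mu$ reaching $0$ in time $\lesssim\delta^2\sqrt{\lambda_0}$ or similar. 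This gives $T_w<t_0$ once $\lambda_0$ is small.

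Third, we set $q_0:=w(0)$ on $[0,\delta]$, let it rise slightly above $\pi+\varepsilon_*/2$ (by at most $\varepsilon_*/2$) just outside $\delta$, then cut off smoothly to $0$ near $r=1$. This keeps $\|q_0\|_\infty\le\pi+\varepsilon_*$. Step one then gives $q(t,\delta)\ge \pi+\varepsilon_*/2\ge w(t,\delta)$ for $t\le t_0$, with $\eta$ chosen to be the margin. Comparison on $[0,\delta]$ gives $q\ge w$ there. The CDY mechanism then forces gradient blow-up of $q$ at the origin: $w$ satisfies $w(t,r)\ge$ something near $\pi$ at radii $r\to0$ as $t\to T_w$, while $q(t,0)=0$. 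Hence $T_{\max}\le T_w<\mathcal T$. The mild-solution framework of Appendix~\ref{subsec:wp-mild} handles the regularity of $g\in C^1$.

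The main obstacle is step two. We need a \emph{quantitative} blow-up time for the CDY flow that can be made arbitrarily small by concentration, and this must be compatible with an ordering argument. Theorem~\ref{thm:in:threshold}(i) alone gives no time bound. I would first try an explicit subsolution $Q_{\mu(t)}(r)+$(correction supported away from $0$) with $\dot\mu\approx -c/\sqrt{\mu}$-type dynamics, mirroring the paper's claimed $O(\sqrt{\lambda_0})$ estimate.
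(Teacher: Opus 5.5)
There is a genuine gap: the proposal leaves its own main step unresolved, and the fix you sketch for it cannot work. Your architecture is otherwise the paper's: a collapsing lower barrier on a fixed inner ball, an interface value protected for a short time uniformly over controls with $|g|\le M$, and a concentration scale small enough that the collapse wins.

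\textbf{The missing barrier.} Theorem~\ref{thm:in:threshold}(i) gives no blow-up time, so you need an explicit barrier. A barrier equal to $Q_{\mu(t)}$ plus a correction supported away from $r=0$ is never a subsolution. Since $\cL Q_\ell=0$ and $\partial_tQ_{\ell(t)}=-2r\ell'/(\ell^2+r^2)>0$ when $\ell'<0$, the shrinking bubble is a strict \emph{super}solution. Its defect has size $|\ell'|/\ell$ at $r\sim\ell(t)\to0$, so the correction must create positive $\cL$ at the origin. The paper uses the CDY profile $\underline q=Q_{\ell(t)}(r)+Q_\mu(r^{3/2})$ of Proposition~\ref{lem:cdy-pro}. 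With $H=Q_\mu(r^{3/2})$ and $a=3/2$, the identity $2r^2\cL(Q_\ell+H)=(a^2-\cos2Q_\ell)\sin2H+\sin2Q_\ell(1-\cos2H)$ gives $\cL\underline q\gtrsim r^{-1/2}/\mu$. This absorbs $\partial_tQ_\ell$ provided $|\ell'|\le(\kappa/\mu)\ell^{1/2}$, so your $\dot\mu\approx-c/\sqrt\mu$ is not admissible in that estimate. The admissible law $\ell'=-(\kappa/\mu)\ell^{1/2}$ still collapses at $2\mu\sqrt\lambda/\kappa\to0$. Comparing $q$ directly with this profile also yields $q(t,\ell(t))\ge\pi/2$, hence $\|q_r(t)\|_{L^\infty}\ge\pi/(2\ell(t))$. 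So neither your intermediate flow $w$ nor a bubbling statement about it is needed.

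\textbf{No interface margin.} You set $q_0=w(0)$ on $[0,\delta]$, so $w(t,\delta)\equiv\pi+\varepsilon_*/2=q_0(\delta)$. A bound $q(t,\delta)\ge q_0(\delta)-\eta$ then never gives $q(t,\delta)\ge w(t,\delta)$. The paper uses that $\underline q(t,R)<B:=\pi+Q_\mu(R^{3/2})$ for every scale. It takes $\mu\ge8R^{1/2}/\varepsilon_*$, so that $B/R<\pi/R+\varepsilon_*/2$. It then sets $q_0=\zeta\,(Q_\lambda+\varepsilon_*r/2)$, which dominates $\underline q(0,\cdot)$ on $[0,R]$, satisfies $q_0/r\ge K>B/R$ near $R$, and stays below $\pi+\varepsilon_*$.

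\textbf{The comparison in your Step 1.} Comparing with the trace-$(-M)$ solution from the same datum is not a sound route, for three reasons.
\begin{enumerate}
\item That trace is incompatible with $q_0(1)=0$.
\item The comparison solution carries the same concentrated core, so it may cease to exist on a $\lambda_0$-dependent time scale.
\item The claimed $\lambda_0$-uniform bound at $\delta$ is a heat-kernel heuristic that ignores the nonlinearity. Comparison should also be done for $v=q/r$, since $\sin(2q)/(2r^2)$ is not uniformly Lipschitz near $r=0$.
\end{enumerate}
Lemma~\ref{lem:un:interface} avoids any nonlinear comparison solution. For $V=q/r$ on $B^4$, the spatially constant $y_M$ with $y_M'=\frac23y_M^3$ and $y_M(0)=-M$ is a subsolution lying below every admissible trace. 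Monotonicity of $F$ then makes $V-y_M$ a supersolution of the linear heat equation, so $V\ge y_M+(K+M)S_{D,4}(t)\chi_4$. This gives a protection time $\tau_0$ independent of $\lambda$ and $g$.
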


The proof again uses a collapsing CDY profile, but now only on a fixed inner region $[0,R]$.  We construct an initial datum that dominates this profile on $[0,R]$ and has a strict interface margin at $r=R$. 
A short-time parabolic estimate shows that boundary controls bounded by $M$ cannot destroy this margin before a time independent of $\lambda$.  Choosing the scale sufficiently small then makes the inner CDY core collapse within this time interval and forces finite-time blow-up.

\begin{remark} \label{rem:bou-con:sharp}
The initial-amplitude threshold $\pi$ in Theorem~\ref{thm:bou-con} is sharp in the following sense.  Let $q_0$ be nonnegative and smooth, and suppose that $\|q_0\|_{L^\infty(0,1)}\leq \pi$.
Then Theorem \ref{thm:in:threshold} shows that the solution corresponding to  the compatible constant boundary control  $g\equiv q_0(1)$ is global.
\end{remark}

\vspace{2mm}

\noindent\textbf{Organization of the paper.} Section~\ref{sec:comparisons} explains the control-adapted gluing strategy, the role of the sign-changing mode, and the nonlinear controlled upper barrier.  Section~\ref{sec:annular-mode} selects the sign-changing mode. Section~\ref{sec:ann-bar} constructs the exact annular barrier and proves Theorem~\ref{thm:main}.  Sections~\ref{sec:ind-blow} and \ref{sec:un-blow} prove Theorems \ref{thm:ind-blow}--\ref{thm:bou-con}.  Appendix~\ref{sec:wp-comp} gives the well-posedness, blow-up, and weak comparison framework.  The quantitative CDY subsolution is given in Appendix~\ref{sec:cdy}.
\vspace{2mm}

\section{The control-adapted gluing strategy for preventing blow-up}\label{sec:comparisons}

This section explains the main idea to prove Theorem \ref{thm:main}.  The aim is to build an upper comparison that works for every datum in \eqref{eq:ma:data}.  
We explain the fixed-interface gluing, the sign change forced by the interface and boundary requirements, and the passage from a linear design to an exact nonlinear barrier. 

\medskip

Clearly, a stationary upper barrier is not sufficient. Indeed, every stationary harmonic map $Q_\rho$ in \eqref{eq:in:harmap} stays strictly below $\pi$ on the disk. 

\medskip

\noindent\textbf{The control-adapted gluing strategy.} 
We should not search for one straightforward global exact solution having all the desired properties.  Instead, we split the disk at the fixed radius $R$ and search for the upper barrier in the schematic form
\begin{equation}\label{eq:st:glu}
\begin{aligned}
\overline q_\lambda(t,r) &:=
\begin{cases}
Q_\lambda(r),& \forall r\in[0,R],\\
\overline q_{\lambda,\out}(t,r)= Q_\lambda(r)+\lambda w_\lambda(t,r),& \forall r\in (R,1].
\end{cases}
\end{aligned}
\end{equation}
\begin{itemize}[leftmargin=2.4em]
\item On the inner disk, we keep the \emph{stationary harmonic map} $Q_\lambda$. It is an exact solution. It controls the origin region, and satisfies
$0\leq Q_\lambda(r)/r\leq2/\lambda$.

\item On the fixed outer annulus, we construct an \emph{exact nonlinear evolution} $\overline q_{\lambda,\out}$ satisfying
\begin{equation*}
\begin{cases}
(\overline q_{\lambda,\out})_t =\cL\overline q_{\lambda,\out},&\forall (t,r)\in\mathbb R^+\times(R,1),\\
\overline q_{\lambda,\out}(t,R)=Q_\lambda(R),&\forall t\in\mathbb R^+,\\
\overline q_{\lambda,\out}(t,1)=g_\lambda(t),&\forall t\in\mathbb R^+.
\end{cases}
\end{equation*}
It allows the outer trace to start from the constant blow-up value and then decay while the stationary core $Q_{\lambda}$  remains unchanged. Only the \emph{annular correction} $w_\lambda$ is required to decay, but the profile at time $T_1$  lies strictly below $\pi$.

\item The two exact solutions are continuous at $r=R$, but their radial derivatives are allowed to \emph{jump}.  We choose the sign of this jump so that the interface defect is a nonnegative measure in the supersolution inequality.  The glued function is therefore a \emph{weak upper solution}.
\end{itemize}

Important inner--outer gluing constructions for parabolic concentration in critical heat equations can be found in \cite{delPino-Musso-Wei-2020,delPino-Musso-Wei-2021}.  The idea of gluing was also used by Coron and the author to solve small-time global controllability between stationary harmonic maps  \cite{Coron-Xiang-2025}; see also \cite{Coron-Krieger-Xiang-arXiv-2025}.
The gluing strategy used here has a different role: it joins two exact solutions at a fixed interface and retains the derivative jump as a favorable measure in the weak supersolution inequality.  In this way, the stationary core handles the origin region, while the flexibility of the boundary control handles the exact outer solution.
\medskip

\noindent\textbf{The gluing constraint and the forced sign change.} Continuity at the fixed interface requires $w_\lambda(t,R)=0$.
The radial derivatives of the two exact pieces, however, need not agree. For a continuous and piecewise $C^1$ function $h$, we use the notation $[ \cdot ]_R$ to define the jump
\begin{equation}\label{eq:st:jumcon}
[h_r]_R:=h_r(R+)-h_r(R-).
\end{equation}
Distributional differentiation gives $h_{rr}=(h_{rr})_{\rm reg}+[h_r]_R\delta_R$. Since both pieces in \eqref{eq:st:glu} solve the equation away from $R$, we obtain that at the interface $R$ there is
\begin{equation*}
(\overline q_\lambda)_t-\cL\overline q_\lambda= \left((\overline q_\lambda)_t-\cL\overline q_\lambda\right)_{\rm reg}- [\partial_r\overline q_\lambda]_R\delta_R =-[\partial_r\overline q_\lambda]_R\delta_R.
\end{equation*}
To obtain a weak supersolution after gluing, the derivative jump must have the favorable sign:
\begin{equation}\label{eq:st:favjum}
[\partial_r\overline q_\lambda]_R =\lambda\partial_rw_\lambda(t,R)<0.
\end{equation}
The interface defect is then a nonnegative measure in the supersolution inequality.  
Thus, the jump is not an error to be removed; it is the sign that makes the gluing useful.  The weak formulation and comparison principle are proved in Appendix~\ref{subsec:weak-comp}.

Together with $w_\lambda(t,R)=0$, condition \eqref{eq:st:favjum} forces the correction to be negative immediately to the right of $R$.  At the outer boundary, the opposite sign is needed: the initial trace must satisfy $Q_\lambda(1)+\lambda w_\lambda(0,1)>\pi$, whereas $Q_\lambda(1)=\pi-2\lambda+O(\lambda^3)$.  Thus,  the outer correction $\lambda w_{\lambda}$ needs to change sign on the annulus.

\medskip

\noindent\textbf{The linear sign-changing mode and the approximate barrier.} We next explain, based on approximation, how to find a linear mode.  Substituting the annular expression in \eqref{eq:st:glu} into the exact equation and using $\cL Q_\lambda=0$, we obtain
\begin{equation}\label{eq:st:scaleq}
(w_\lambda)_t=(w_\lambda)_{rr}+\frac1r(w_\lambda)_r -\frac{\sin(2Q_\lambda+2\lambda w_\lambda)-\sin(2Q_\lambda)} {2\lambda r^2}.
\end{equation}
Observe that on the fixed annulus $[R, 1]$, $Q_\lambda$ converges uniformly to $\pi$ as $\lambda\to 0$.  Hence, for uniformly bounded corrections $w_{\lambda}$, the equation converges to the linear evolution
\begin{equation}\label{eq:st:linlim}
(w_{\rm li})_t=\cA w_{\rm li},   \textrm{ \; \; where  \;}  \cA w_{\rm li}:=(w_{\rm li})_{rr} +\frac1r(w_{\rm li})_r-\frac1{r^2}w_{\rm li}.
\end{equation}
This is the reason for working on a fixed annulus: the singular origin region has already been isolated inside the stationary core, while the outer equation has a regular limit for  $\lambda$ small.
\medskip

We choose a separated solution of \eqref{eq:st:linlim},
\begin{equation}\label{eq:st:lindes}
w_*(t,r):=Ae^{-\nu t}\psi(r),
\end{equation}
where $A,\nu>0$ are constants and $\psi$ solves the spectral equation $\cA\psi=-\nu\psi$.  We normalize the scale of $\psi$ by requiring
\begin{equation}\label{eq:st:mode}
\psi''+\frac1r\psi' +\left(\nu-\frac1{r^2}\right)\psi=0, \quad \psi(R)=0  \textrm{ \; and  \;}  \psi(1)=1.
\end{equation}

To turn this separated solution into a suitable approximate annular barrier, we further require
\begin{equation}\label{eq:st:moreq}
\begin{cases}
\psi'(R)<0,\\
\psi\text{ has exactly one zero in }(R,1),\\
A\psi(1)>2,\\
Ae^{-\nu T_1}\displaystyle\max_{R\leq r\leq1}r\psi_+(r)<2 \; \textrm{ \; \; with \; } \psi_+:=\max\{\psi,0\}.
\end{cases}
\end{equation}
\begin{itemize}[leftmargin=2.6em]
\item The first two conditions give the sign structure required by the gluing.  Since $\psi(R)=0$ and $\psi'(R)<0$, the correction is negative immediately to the right of $R$.  The unique sign change then connects this negative part to the positive value $\psi(1)=1$ at the outer boundary. This also gives the favorable derivative jump after gluing.
\smallskip

\item The third condition moves the initial boundary condition, $Q_\lambda(1)+\lambda A \psi(1)$,  above $\pi$.
\smallskip

\item The fourth condition comes from the requirement,
\begin{equation*}
Q_\lambda(r)+\lambda Ae^{-\nu T_1}\psi(r)<\pi,
\quad \forall r\in[R,1].
\end{equation*}
It, together with the perturbative estimates leading to the exact nonlinear evolution, ensures that the controlled solution lies strictly below $\pi$ at time $T_1$.  
\end{itemize}
Section~\ref{sec:annular-mode} selects concrete parameters and verifies all four conditions in \eqref{eq:st:moreq}.  In fact, the last bound is obtained with $1/4$, leaving room to absorb the later nonlinear perturbations.

The piecewise function obtained by gluing $Q_\lambda+\lambda w_*$ to $Q_\lambda$ is {\it only an approximate barrier}.   The next step is to turn this linear design into an exact nonlinear upper solution.

\begin{figure}[H]
\centering
\begin{tikzpicture}[x=1.05cm,y=.9cm,>=Latex]
\draw[->] (0,0) -- (7.3,0) node[right] {$r$};
\draw[->] (0,-1.75) -- (0,1.55) node[above] {$\psi(r)$};
\draw[very thick]
plot[smooth] coordinates {(0,0) (.55,-.80) (1.55,-.70)
  (2.55,-.25) (3.05,0) (4.50,.62) (6.80,1.02)};
\draw[dashed] (3.05,-1.05) -- (3.05,1.10);
\node[anchor=north east] at (-.04,-.04) {$R$};
\node[below] at (6.80,0) {$1$};
\node[align=center] at (1.55,-1.48) {negative part\\
favorable flux jump};
\node[align=center] at (5.35,1.42) {positive part\\
trace above $\pi$};
\draw[->] (.92,-1.24) -- (.50,-.69);
\end{tikzpicture}
\caption{The sign changing forced by the gluing.  
}
\label{fig:sign-mode}
\end{figure}

\noindent\textbf{The nonlinear barrier.} Finally, one perturbation turns the approximate barrier into an exact nonlinear one.  The separated solution $w_*$ determines the outer trace.  We impose the same trace on the exact nonlinear annular problem, keeping the initial value $Q_\lambda+\lambda A\psi$ and the inner boundary value $Q_\lambda(R)$. Thus, the boundary control is fixed as
\begin{equation}
g_\lambda(t):=Q_\lambda(1)+\lambda Ae^{-\nu t} =\overline q_{\lambda,\out}(t,1).  \notag
\end{equation}
Straightforward parabolic estimates show that the scaled correction $w_\lambda$ of the exact outer solution converges to $w_*$, both uniformly on the annulus and in the radial derivative at $R$.  

The exact outer solution is then glued to $Q_\lambda$ as in \eqref{eq:st:glu}.  The strict negative derivative jump makes the result a weak supersolution and propagates the upper comparison for every datum satisfying \eqref{eq:ma:data}.  For some well-chosen $T_1$, the nonlinear perturbative estimate gives $\overline q_\lambda(T_1,\cdot)<\pi$ on the whole disk.  The actual controlled solution therefore lies strictly below $\pi$ at time $T_1$.  Finally, stationary upper comparison and parabolic estimates then yield a uniform gradient bound and exclude infinite-time concentration.

\begin{figure}[H]
\centering
\begin{tikzpicture}[x=1.05cm,y=1.0cm,>=Latex]
\draw[->] (0,0) -- (6.25,0) node[right] {$r$};
\draw[->] (0,0) -- (0,3.10) node[above] {$q$};
\draw[dashed] (0,2.10) -- (5.92,2.10) node[right] {$\pi$};
\draw[dashed] (1.52,0) -- (1.52,2.72);
\node[below] at (1.52,0) {$R$};
\draw[blue,very thick]
plot[smooth] coordinates {(0,0) (.25,.82) (.55,1.52) (.90,1.91) (1.25,2.03) (1.52,2.02)};
\draw[red,very thick]
plot[smooth] coordinates {(1.52,2.02) (2.05,1.82) (2.82,1.76) (3.60,1.88) (4.45,2.22) (5.72,2.75)};
\draw[orange!85!black,very thick]
plot[smooth] coordinates {(1.52,2.02) (2.05,1.88) (2.82,1.86) (3.60,1.95) (4.45,2.08) (5.72,2.28)};
\draw[green!55!black,very thick]
plot[smooth] coordinates {(1.52,2.02) (2.05,1.94) (2.82,1.93) (3.60,1.97) (4.45,2.00) (5.72,2.04)};
\node[blue,align=center] at (.65,2.55) {stationary\\core};
\node[red] at (4.98,2.83) {$t_0$};
\node[orange!85!black] at (5.10,2.39) {$t_1$};
\node[green!55!black] at (5.13,1.82) {$t_2=T_1$};
\end{tikzpicture}
\caption{Annular upper barrier evolution.  The stationary core is kept fixed on $[0,R]$, while the exact nonlinear outer evolution decays under its controlled trace.  At the terminal time $T_1$, the full glued upper barrier lies below $\pi$.}
\label{fig:ann-evo}
\end{figure}

To summarize, the proof of Theorem~\ref{thm:main} follows three steps.
\begin{itemize}[leftmargin=3.0em]
\item[\textit{Step 1.}] Construct the sign-changing mode $\psi$ satisfying outer-boundary and terminal positive-part bounds.  Define the upper initial profile $\overline q_{\lambda,0}$ and approximate upper barrier $Q_{\lambda}+\lambda A e^{-\nu t}\psi$, place its boundary value above $\pi$, and remove $\pi$-crossing at time $T_1$.
\smallskip

\item[\textit{Step 2.}] Use the decaying trace dictated by the linear mode, solve the exact nonlinear annular problem, and glue its solution  $w_{\lambda}$ to the stationary core $Q_{\lambda}$.  The favorable derivative jump gives a weak upper comparison for every datum below $\overline q_{\lambda,0}$.
\smallskip

\item[\textit{Step 3.}] Use the CDY blow-up criterion for the constant trace.  For the controlled trace, propagate the glued upper comparison to time $T_1$, remove the $\pi$-crossing, and dominate the future solution by a stationary harmonic map.
\end{itemize}

Accordingly, Section~\ref{sec:annular-mode} implements Step~1, while Section~\ref{sec:ann-bar} implements Steps~2 and~3.
\section{A sign-changing annular mode}\label{sec:annular-mode}

In this section, we construct a function $\psi$ satisfying the four conditions in \eqref{eq:st:moreq},  as illustrated in Section~\ref{sec:comparisons}.
We first choose $R$ and $\nu$ so that $\psi$ has the required sign change due to Sturm-Liouville theory, and then select $A$ and $T_1$ so that the last two quantitative bounds hold. 

\begin{lemma}\label{lem:annular-mode}
Let us fix the parameters
\begin{equation}\label{eq:mo:para}
R=\frac1{10}, \quad \nu=30, \quad A=3, \; \textrm{ and } \; T_1=\frac15.
\end{equation}
There is a unique function $\psi\in C^\infty([R,1])$ satisfying \eqref{eq:st:mode}, namely,
\begin{equation}\label{eq:mo:intro}
\psi''+\frac1r\psi' +\left(30-\frac1{r^2}\right)\psi=0, \quad \psi(R)=0 \textrm{\; and  \;} \psi(1)=1.
\end{equation}

Moreover, this unique solution satisfies
\begin{gather}
\psi'(R)<0, \quad \psi\text{ has exactly one zero in }(R,1),  \label{eq:mo:sign} \\
M_\psi:=\max_{R\leq r\leq1}r\psi_+(r)< 23  \textrm{\; with \;} \psi_+:=\max\{\psi,0\}, \textrm{ and, }\label{eq:mo:pos} \\
A\psi(1)=3>2  \textrm{ \; and  \; }  Ae^{-\nu T_1}M_\psi<\frac14. \label{eq:mo:quan}
\end{gather}
\end{lemma}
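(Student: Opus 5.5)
The plan is to solve \eqref{eq:mo:intro} explicitly through Bessel functions and to handle the quantitative claims by Sturm comparison together with a certified numerical evaluation. Write $k:=\sqrt{30}$. The equation $\psi''+\frac1r\psi'+(k^2-r^{-2})\psi=0$ is Bessel's equation of order one in the variable $kr$. Its solutions on $[R,1]$ are therefore spanned by $J_1(kr)$ and $Y_1(kr)$, which are smooth (in fact analytic) there since $R>0$.

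Let $\varphi$ be the solution with $\varphi(R)=0$ and $\varphi'(R)=-1$. By uniqueness for the regular initial value problem, every solution vanishing at $R$ is a multiple of $\varphi$. Hence existence and uniqueness of $\psi$ is equivalent to $\varphi(1)\neq0$, and then $\psi=\varphi/\varphi(1)$. Explicitly,
\[
\varphi(r)=c\bigl(J_1(kr)Y_1(kR)-Y_1(kr)J_1(kR)\bigr),
\]
with $c$ fixed by $\varphi'(R)=-1$ via the Wronskian identity $J_1Y_1'-J_1'Y_1=2/(\pi x)$. Once we know $\varphi(1)>0$, the sign condition $\psi'(R)=-1/\varphi(1)<0$ follows immediately.

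\textbf{Zero count.} To get at most one interior zero, I use the Liouville substitution $\varphi=r^{-1/2}v$, which gives
\[
v''+\Bigl(30-\frac{3}{4r^2}\Bigr)v=0 .
\]
The coefficient is $\le 30$. By Sturm comparison with $\sin(k(r-R))$, consecutive zeros of $v$ are at distance at least $\pi/k\approx0.574$. Starting from the zero at $R$, the next zero lies beyond $R+0.574\approx0.674$, and the one after that beyond $1.24>1$. So $\varphi$ has at most one zero in $(R,1]$. Since $\varphi$ is negative just to the right of $R$, the conditions $\varphi(1)>0$ and ``exactly one zero in $(R,1)$'' are equivalent. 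Both therefore reduce to checking the sign of the single number $J_1(k)Y_1(kR)-Y_1(k)J_1(kR)$, taken with the sign of $c$. For this I will use standard tabulated or rigorously enclosed values of $J_1,Y_1$ at $kR\approx0.5477$ and $k\approx5.477$; note that $k$ lies past the first zero $3.8317$ of $J_1$. Alternatively, a validated numerical ODE integration of $\varphi$ on $[R,1]$ with error bounds does the same job.

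\textbf{Quantitative bounds.} The bound $M_\psi<23$ is the step I expect to be the main obstacle. The risk is that $\psi_+$ could be large if $\varphi(1)$ were close to $0$, i.e.\ near resonance. So the proof needs both a quantitative lower bound on $\varphi(1)$ and an upper bound on $\max_{[z,1]} r\varphi$, where $z$ is the unique zero. I would first try to obtain both from the same validated enclosure of $\varphi$ on a grid, bounding $\varphi'$ between grid points via the equation. A cruder route is the energy bound for $v$: the quantity $v'^2+30v^2$ is essentially monotone on $[z,1]$ because the coefficient is increasing. This controls $\max_{[z,1]}|v|$ by $|v(1)|$ and $|v'(1)|$, which are again computed from Bessel values.

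Once $M_\psi<23$ is in hand, \eqref{eq:mo:quan} is arithmetic. We have $A\psi(1)=3$, and since $\nu T_1=6$,
\[
Ae^{-6}M_\psi<3\cdot 23\cdot e^{-6}\approx0.171<\tfrac14 .
\]
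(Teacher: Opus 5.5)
Your route is viable but genuinely different from the paper's. As written, its two load-bearing facts are only planned, not proved: the sign of $\varphi(1)$ (which carries existence/uniqueness, $\psi'(R)<0$ and the zero count) and the bound $M_\psi<23$ both rest on Bessel values you have not actually enclosed. They do hold with wide margins. One finds $J_1(k)Y_1(kR)-Y_1(k)J_1(kR)\approx0.47$, so with your normalization $\varphi(1)=\tfrac{\pi R}{2}\cdot0.47\approx0.074$. Moreover $v=\sqrt r\,\varphi$ is positive and concave on $[z,1]$ with $v'(1)>0$, so in fact $M_\psi=1$.

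The paper uses no special functions or numerics at all. It shows $\nu_1<21$ by a Rayleigh quotient with the test function $(r-\tfrac14)(1-r)$. It shows $\nu_2\geq(2\pi/(1-R))^2>36$ by min--max after the same Liouville substitution $\sqrt r\,\varphi$; this is exactly the variational form of your comparison with $\sin(k(r-R))$. It then invokes Sturm oscillation for the zero count and the sign of $h(1)$.

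For $M_\psi$, which you rightly flag as the crux, the idea that replaces your numerics is the harmonic lifting $\Phi_R=(r-R^2/r)/(1-R^2)$. The difference $y=\psi-\Phi_R$ solves $(\mathcal A_R-30)y=30\Phi_R$, and the two spectral bounds give $\nu_n/(\nu_n-30)^2<1$ for every $n$. Hence $Q_R[y]<450$ and $r\psi_+<1+15\sqrt2$. Non-resonance is converted directly into a size bound, so no lower bound on $\varphi(1)$ is ever needed.

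Your approach yields much sharper information, but it is tied to $R=\tfrac1{10}$, $\nu=30$ and to certified special-function values. The paper's argument is cruder but self-contained and uniform in $R\in(0,\tfrac14)$ (Remark~\ref{rem:mo:flex}).

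If you keep your route, you can make the zero count numerics-free. On $[\tfrac3{10},1]$ one has $30-\tfrac{3}{4r^2}\geq\tfrac{65}{3}$ and $\pi/\sqrt{65/3}<\tfrac7{10}$, so Sturm comparison from below forces a zero of $v$ in $(\tfrac3{10},1)$. This is the ODE counterpart of the paper's test function on $[\tfrac14,1]$, and together with your upper comparison it gives exactly one zero and $\varphi(1)>0$.

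For $M_\psi$, use $E=v'^2+p\,v^2$ with $p(r):=30-\tfrac{3}{4r^2}$. This $E$ is exactly nondecreasing since $p'>0$, unlike $v'^2+30v^2$. It gives $r\psi_+\leq\sqrt{(p(1)+\sigma^2)/p(z)}$ with $\sigma:=v'(1)/v(1)$ and $p(z)>28$. So only a crude certified bound such as $|\sigma|\leq100$ is needed.
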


\begin{remark} \label{rem:mo:flex}
The proof below gives more than the particular choice $R=1/10$: for every $R\in(0,1/4)$, the choices $A=3$, $\nu=30$, and $T_1=1/5$ satisfy all the estimates in Lemma~\ref{lem:annular-mode}, with the solution of \eqref{eq:mo:intro}.  More generally, any suitable $R,\nu,A,T_1$ satisfying the four strict conditions in \eqref{eq:st:moreq} can be used to fix $\psi$ and construct the annular barrier.  The constants above are chosen for simplicity and are not optimized.
\end{remark}

\begin{proof}
We divide the proof into three steps.  We first place $\nu=30$ between the first two Dirichlet eigenvalues of the annular operator.  We then use Sturm oscillation to obtain \eqref{eq:mo:sign}.  Finally, a harmonic lifting gives the explicit bound \eqref{eq:mo:pos} and allows us to fix $A$ and $T_1$.

\medskip

\noindent\emph{Step 1. Choice of $R$ and $\nu$.} For the moment, let $R\in(0,1/4)$.  Recall that the spectral equation for the separated mode is $-\cA\varphi=\nu\varphi$.  The operator $\cA$ is the radial part of the planar Laplacian in the first angular mode and can be written as
\begin{equation*}
\cA \varphi=\frac1r(r \varphi_r)_r-\frac1{r^2}\varphi.
\end{equation*}
Its natural radial measure is therefore $r\dd r$.  To control the number of zeros of the mode, we consider the positive Dirichlet operator
\begin{equation*}
\mathcal A_R:=-\cA
=-\partial_{rr}-\frac1r\partial_r+\frac1{r^2}
\quad\text{in }L^2((R,1),r\dd r),
\end{equation*}
with boundary conditions $\varphi(R)= \varphi(1)= 0$.  This defines a positive self-adjoint operator with compact resolvent (in abuse of notation, we changed its sign).
Although the normalized mode will satisfy $\varphi(1)=1$, the Dirichlet eigenvalues are precisely the values of $\nu$ for which a solution vanishing at $R$ and $1$.  They help us to understand its oscillation on the annulus  \cite{Zettl-2005}.
\vspace{2mm}

Denote its first two eigenvalues by $\nu_1<\nu_2$.  Its quadratic form is 
\begin{equation*}
Q_R[\varphi]:=  \langle\mathcal A_R \varphi, \varphi\rangle_{L^2((R,1),r \dd r)}= \int_R^1 \left(r|\varphi'(r)|^2+\frac{|\varphi(r)|^2}{r}\right)\dd r, \quad \forall \varphi \in H_0^1(R,1).
\end{equation*}
Notice that the change of unknown $z=\sqrt r\,\varphi$ removes the weight; this change of variable can simplify the variation calculus.  Simple integration by parts gives
\begin{equation}\label{eq:mo:unif}
\int_R^1r|\varphi|^2\dd r=\int_R^1|z|^2\dd r  \textrm{ \; and  \; \;}
Q_R[\varphi] =\int_R^1\left(|z'|^2+\frac{3}{4r^2}|z|^2\right)\dd r.
\end{equation}

We next derive a uniform upper bound for $\nu_1$.  By the Rayleigh--Ritz principle,
\begin{equation*}
\nu_1 =
\inf_{\varphi\in H_0^1(R,1)\setminus\{0\}} \frac{Q_R[\varphi]}{\int_R^1r|\varphi|^2\dd r}.
\end{equation*}
To obtain a bound independent of $R\in(0,1/4)$, we choose a test function supported on the fixed interval $[1/4,1]$.  More precisely, define
\begin{equation*}
\varphi_0(r):=
\begin{cases}
0,& \forall r\in [R,1/4],\\
(r-1/4)(1-r),& \forall r\in (1/4,1].
\end{cases}
\end{equation*}
The function $\varphi_0$ belongs to $H_0^1(R,1)$.  Direct calculations lead to
\begin{equation*}
Q_R[\varphi_0]=\frac{15}{1024}+\frac1{16}\log4, \quad
\int_R^1r|\varphi_0|^2\dd r=\frac{81}{16384}.
\end{equation*}
Consequently, using $\log4<7/5$, we obtain
\begin{equation*}
\nu_1 \leq \frac{Q_R[\varphi_0]}{\int_R^1r|\varphi_0|^2\dd r}
= \frac{80}{27}+\frac{1024}{81}\log4 <21.
\end{equation*}

For the second eigenvalue $\nu_2$, let $\mu_2$ denote the second eigenvalue of the 1d Dirichlet Laplacian $-\partial_{rr}$ on $(R,1)$.  By the change of variable $z=\sqrt r\,u$, \eqref{eq:mo:unif} and the min--max principle,
\begin{align*}
\nu_2 &= \min_{\substack{E\subset H_0^1(R,1)\\ \dim E=2}}   \max_{0\neq z\in E}
\frac{\int_R^1\left(|z'|^2+\frac{3}{4r^2}|z|^2\right)\dd r} {\int_R^1|z|^2\dd r}\\
&\geq \min_{\substack{E\subset H_0^1(R,1)\\ \dim E=2}} \max_{0\neq z\in E}
\frac{\int_R^1|z'|^2\dd r} {\int_R^1|z|^2\dd r}
=\mu_2.
\end{align*}
Since the $k$-th Dirichlet eigenvalues of $-\partial_{rr}$ on $(R,1)$ are $(k\pi/(1-R))^2$, we conclude that
\begin{equation*}
\nu_2\geq\mu_2=\left(\frac{2\pi}{1-R}\right)^2>36.
\end{equation*}

Thus, $\nu=30$ lies strictly between the first two Dirichlet eigenvalues $\nu_1$ and $\nu_2$.

\medskip

\noindent\emph{Step 2. Construct the sign-changing mode.} Let  $h$ be the solution of
\begin{equation}\label{eq:mo:lefnor}
h''+\frac1r h' +\left(30-\frac{1}{r^2}\right)h=0, \qquad h(R)=0, \quad h'(R)=1.
\end{equation}
Since $30$ is not a Dirichlet eigenvalue, $h(1)\neq0$.  The regular Sturm oscillation theorem states that, when $\nu\in(\nu_k,\nu_{k+1})$, the solution with $h(R)=0$ and $h'(R)=1$ has exactly $k$ zeros in $(R,1)$; see, for example, \cite{Zettl-2005}.  Hence, $h$ has exactly one zero in $(R,1)$.  Since $h'(R)=1$, the function is positive immediately to the right of $R$.  
Every zero of a nontrivial solution is simple, so $h$ changes sign at its unique zero and remains negative up to $r=1$.  In particular, $h(1)<0$.
\smallskip

Define $\psi(r):=h(r)/h(1)$.  Then $\psi$ is the unique solution of \eqref{eq:mo:intro}, and $\psi'(R)=1/h(1)<0$.  Thus, $\psi$ is negative immediately to the right of $R$, crosses zero exactly once, and is positive near $r=1$.  This proves the requirement \eqref{eq:mo:sign}.

\medskip

\noindent\emph{Step 3. Choice of $A$ and $T_1$.} Clearly, choosing $A=3$ gives $A\psi(1)=3>2$.  To determine $T_1$, we first bound the constant $M_\psi$ in \eqref{eq:mo:pos}.  Define the harmonic lifting
\begin{equation}\label{eq:mo:harli}
\Phi_R(r):=\frac{r-R^2/r}{1-R^2}.
\end{equation}
It satisfies
\begin{equation*}
-\mathcal A\Phi_R=0, \qquad \Phi_R(R)=0, \quad \Phi_R(1)=1,  \; \textrm{ and }  0\leq\Phi_R\leq1.
\end{equation*}
Set $y:=\psi-\Phi_R$.  Then $y\in H_0^1(R,1)$ and
\begin{equation}\label{eq:mo:lifres}
(\mathcal A_R-30)y=30\Phi_R.
\end{equation}

Let $\{e_n\}_{n\geq1}$ be an orthonormal Dirichlet eigenbasis of $\mathcal A_R$ in $L^2((R,1),r\dd r)$, with corresponding eigenvalues $\{\nu_n\}_{n\geq1}$. 
Now we decompose $y$ and $\Phi$ using $\{e_n\}_{n\geq 1}$,
\begin{equation*}
y=\sum_{n\geq1}y_ne_n \textrm{ \; and \;  } 30\Phi_R=\sum_{n\geq1}f_ne_n.
\end{equation*}
Equation \eqref{eq:mo:lifres} yields $y_n=f_n/(\nu_n-30)$.   Hence, using Parseval's identity, we obtain
\begin{align*}
Q_R[y]= \sum_{n\geq1}\nu_n|y_n|^2
=\sum_{n\geq1}\frac{\nu_n}{(\nu_n-30)^2}|f_n|^2.
\end{align*}
The spectral bounds above on $\nu_1$ and $\nu_2$ imply
\begin{equation*}
\frac{\nu_1}{(30-\nu_1)^2}<1  \textrm{ \; as well as \; } \frac{\nu_n}{(\nu_n-30)^2}<1 \; \; \forall n\geq2.
\end{equation*}
This, together with the fact that $0\leq \Phi_R\leq 1$, gives
\begin{align*}
Q_R[y]\leq \sum_{n\geq1}|f_n|^2= \|30\Phi_R\|_{L^2((R,1),r \dd r)}^2< 450.
\end{align*}

Set $z:=\sqrt r\,y$.  Since $z(R)=z(1)=0$, identity \eqref{eq:mo:unif} and direct estimate yield
\begin{equation*}
|z(r)| \leq\|z'\|_{L^2(R,1)} \leq Q_R[y]^{1/2} <15\sqrt2.
\end{equation*}
It follows that $r|y(r)|=\sqrt r\,|z(r)|<15\sqrt2$. 
Since $\psi=\Phi_R+y$, we conclude that
\begin{equation*}
r\psi_+(r) \leq r\Phi_R(r)+r|y(r)| <1+15\sqrt2, \quad \forall r\in[R,1].
\end{equation*}
This proves the bound in \eqref{eq:mo:pos}.

It remains to verify the last estimate in \eqref{eq:mo:quan} by selecting $T_1$. We choose $T_1:=1/5$, so that $\nu T_1=6$, which gives $e^6>400$. 
Using \eqref{eq:mo:pos}, we obtain
\begin{equation*}
Ae^{-\nu T_1}M_\psi =3e^{-6}M_\psi <\frac14.
\end{equation*}
This proves \eqref{eq:mo:quan} and completes the proof.
\end{proof}

\begin{corollary}
After reducing $\lambda_*$, the positivity assertion in Theorem~\ref{thm:main} follows directly. First, the upper boundary value $b_\lambda$ in \eqref{eq:in:upbou} satisfies
\begin{equation*}
b_\lambda-\pi =3\lambda-2\arctan\lambda >\lambda, \quad \forall \lambda\in(0, 1).
\end{equation*}
Second, the initial upper profile $\overline q_{\lambda,0}(r)$ in \eqref{eq:in:uppro} is positive.  Indeed, $Q_\lambda\to\pi$ uniformly on the fixed annulus as $\lambda\to0^+$ and $\psi$ is bounded, while on the inner disk the profile $Q_\lambda> 0$.   
\end{corollary}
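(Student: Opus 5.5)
The corollary makes two claims, and I will treat them in turn: $b_\lambda>\pi$ with a quantitative margin, and positivity of $\overline q_{\lambda,0}$ on $(0,1]$ once $\lambda_*$ is reduced.

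\emph{The boundary value.} Since $Q_\lambda(1)=2\arctan(1/\lambda)=\pi-2\arctan\lambda$ for $\lambda>0$, definition \eqref{eq:in:upbou} gives exactly
\begin{equation*}
b_\lambda-\pi=3\lambda-2\arctan\lambda .
\end{equation*}
The elementary bound $\arctan\lambda<\lambda$ for $\lambda>0$ then yields $b_\lambda-\pi>3\lambda-2\lambda=\lambda>0$. This holds for every $\lambda\in(0,1)$, and in fact for every $\lambda>0$.

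\emph{Positivity on the inner disk.} On $(0,R]$ the profile equals $Q_\lambda(r)=2\arctan(r/\lambda)$, which is strictly positive for $r>0$. No smallness of $\lambda$ is needed here.

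\emph{Positivity on the annulus.} On $(R,1]$ the profile is $Q_\lambda+3\lambda\psi$.
\begin{itemize}
\item By Lemma~\ref{lem:annular-mode}, $\psi\in C^\infty([R,1])$, so $\|\psi\|_{L^\infty(R,1)}=:K<\infty$.
\item $Q_\lambda$ is increasing in $r$, so on $[R,1]$ we have $Q_\lambda(r)\ge Q_\lambda(R)=\pi-2\arctan(\lambda/R)\ge\pi-2\lambda/R$.
\item Hence $Q_\lambda(r)+3\lambda\psi(r)\ge \pi-\lambda(2/R+3K)$ on $[R,1]$.
\end{itemize}
The right-hand side is positive, and indeed at least $\pi/2$, as soon as $\lambda<\pi/(2(2/R+3K))$. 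Reducing $\lambda_*$ below this threshold, and below $1$, gives both claims simultaneously.

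The only mildly nontrivial input is the boundedness of $\psi$, which comes from Lemma~\ref{lem:annular-mode} (smoothness on the compact interval $[R,1]$). Everything else is explicit. If a concrete constant is desired, I would bound $|\psi|\le\Phi_R+|y|$ as in Step~3 of that proof, which gives $r|\psi|<1+15\sqrt2$ and hence $K\le 10(1+15\sqrt2)$ on $[R,1]$.
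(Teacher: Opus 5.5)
Your proposal is correct and follows the paper's argument. The identity $Q_\lambda(1)=\pi-2\arctan\lambda$ together with $\arctan\lambda<\lambda$ gives $b_\lambda-\pi>\lambda$, and positivity comes from $Q_\lambda>0$ on the core plus $Q_\lambda\to\pi$ uniformly on $[R,1]$ and the boundedness of $\psi$; your explicit bound $Q_\lambda\ge\pi-2\lambda/R$ on the annulus just quantifies the paper's uniform-convergence step and yields a concrete threshold for $\lambda_*$.
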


\section{The annular barrier and the proof of Theorem \ref{thm:main}} \label{sec:ann-bar}

This section turns the linear mode and approximate barrier from Section~\ref{sec:annular-mode} into an exact nonlinear upper barrier.  We use the decaying trace and the initial data of the approximate function to solve the exact nonlinear annular problem.  We then glue this outer solution to the stationary core, preserving the favorable interface derivative.  Finally, we propagate the comparison to $T_1$, remove the $\pi$-crossing, and use a stationary upper bound to exclude concentration for all later times. The parameters $(R, \nu, A, T_1)$ are fixed in  Lemma \ref{lem:annular-mode}.

\subsection{The exact nonlinear outer evolution and its trace} \label{subsec:non-ann}
Recall from \eqref{eq:st:linlim}--\eqref{eq:st:lindes} the linear mode on the outer annulus is $w_*(t,r):=3e^{-30t}\psi(r)$.  
 As illustrated in Section \ref{sec:comparisons}, let $\overline q_{\lambda,\out}$ be its unique global mild solution and write $\overline q_{\lambda,\out}=Q_\lambda+\lambda w_\lambda$:
\begin{equation}\label{eq:ba:oupro}
\begin{cases}
(\overline q_{\lambda,\out})_t =\cL\overline q_{\lambda,\out},&\forall (t,r)\in\mathbb R^+\times(R,1),\\
\overline q_{\lambda,\out}(t,R)=Q_\lambda(R),&\forall t\in\mathbb R^+,\\
\overline q_{\lambda,\out}(t,1) =Q_\lambda(1)+3\lambda e^{-30t},&\forall t\in\mathbb R^+,\\
\overline q_{\lambda,\out}(0,r) =Q_\lambda(r)+3\lambda\psi(r),&\forall r\in[R,1].
\end{cases}
\end{equation}
Since $r\geq R$ on the fixed annulus, the nonlinear term $q\mapsto-\sin(2q)/(2r^2)$ is globally Lipschitz, uniformly in $r$.
Standard semilinear Dirichlet theory therefore gives a unique global mild solution of \eqref{eq:ba:oupro}. 
The initial and boundary values are nonnegative for every sufficiently small $\lambda$.
Comparison with zero immediately yields $\overline q_{\lambda,\out}(t,r)\geq0$.
Its outer trace is precisely the control introduced in \eqref{eq:ma:control}.
Thus, $g_\lambda(0)=b_\lambda$, and $g_\lambda$ is nonnegative and strictly decreasing. 
\smallskip

To compare the exact correction $w_\lambda$ with the linear mode $w_*$, we need both uniform convergence on the annulus and convergence of the radial derivative at the interface.  Let $S(t)$ denote the Dirichlet semigroup generated by $\cA$ on $(R,1)$.  The maximum principle and the standard boundary-gradient estimate for uniformly parabolic Dirichlet problems give, for some $C>0$,
\begin{equation}\label{eq:ann-sem}
\|S(t)\varphi\|_{L^\infty(R,1)}\leq\|\varphi\|_{L^\infty(R,1)}, \qquad
|\partial_rS(t)\varphi(R)|\leq Ct^{-1/2}\|\varphi\|_{L^\infty(R,1)},
\end{equation}
for every $\varphi\in L^\infty(R,1)$ and $t\in(0, T_1]$; see, for example, \cite{Lieberman-1996}.

\begin{lemma} \label{lem:non-ann-con}
As $\lambda\to0^+$, one has
\begin{align}
\sup_{0\leq t\leq T_1} \|w_\lambda(t)-w_*(t)\|_{L^\infty(R,1)} &\longrightarrow 0, \label{eq:ba:non-unicon}\\
\sup_{0\leq t\leq T_1} |\partial_rw_\lambda(t,R)-\partial_rw_*(t,R)| &\longrightarrow 0. \label{eq:ba:non-c1con}
\end{align}
\end{lemma}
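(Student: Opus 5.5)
The plan is to write an equation for the difference $v_\lambda:=w_\lambda-w_*$ on the annulus and close a Duhamel estimate. Both $w_\lambda$ and $w_*$ vanish at $r=R$. At $r=1$ both equal $3e^{-30t}$, and both start from $3\psi$. Hence $v_\lambda$ has zero Dirichlet data and zero initial data.

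Subtracting \eqref{eq:st:linlim} from \eqref{eq:st:scaleq} gives
\begin{equation*}
(v_\lambda)_t=\cA v_\lambda+F_\lambda,\qquad
F_\lambda:=\frac{w_\lambda}{r^2}-\frac{\sin(2Q_\lambda+2\lambda w_\lambda)-\sin(2Q_\lambda)}{2\lambda r^2}.
\end{equation*}
We split $F_\lambda$ by Taylor expansion:
\begin{equation*}
\frac{\sin(2Q_\lambda+2\lambda w)-\sin(2Q_\lambda)}{2\lambda}=\cos(2Q_\lambda)\,w+E,\qquad |E|\le \lambda |w|^2 .
\end{equation*}
This yields
\begin{equation*}
|F_\lambda|\le R^{-2}\Bigl(|1-\cos(2Q_\lambda)|\,|w_\lambda|+\lambda|w_\lambda|^2\Bigr).
\end{equation*}
Since $Q_\lambda\to\pi$ uniformly on $[R,1]$, we have $\varepsilon_\lambda:=\sup_{[R,1]}|1-\cos 2Q_\lambda|=O(\lambda^2)\to0$.

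The first step is an a priori bound on $w_\lambda$ for $t\in[0,T_1]$. We use the Duhamel formula $v_\lambda(t)=\int_0^tS(t-s)F_\lambda(s)\dd s$ together with the $L^\infty$ contraction in \eqref{eq:ann-sem}. This gives
\begin{equation*}
\|v_\lambda(t)\|_\infty\le R^{-2}\int_0^t\bigl(\varepsilon_\lambda\|w_\lambda\|_\infty+\lambda\|w_\lambda\|_\infty^2\bigr)\dd s .
\end{equation*}
Set $K:=\sup_{[0,T_1]}\|w_*\|_\infty\le3\|\psi\|_\infty$. We run a continuity (bootstrap) argument on the assumption $\|w_\lambda(t)\|_\infty\le 2K+1$. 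Under this assumption, the right-hand side is at most $R^{-2}T_1\bigl(\varepsilon_\lambda(2K+1)+\lambda(2K+1)^2\bigr)$, which is below $1$ for small $\lambda$. This strictly improves the bootstrap hypothesis, so the bound holds on all of $[0,T_1]$.

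To make the continuity argument rigorous, we note that $t\mapsto \|w_\lambda(t)\|_\infty$ is continuous for the mild solution. The initial datum $3\psi$ is smooth, and the compatibility $\psi(R)=0$, $\psi(1)=1$ with the boundary data holds, so $w_\lambda$ is continuous up to $t=0$. The bootstrap also yields \eqref{eq:ba:non-unicon} directly, since
\begin{equation*}
\sup_{[0,T_1]}\|v_\lambda\|_\infty\le C(\varepsilon_\lambda+\lambda)\to0 .
\end{equation*}

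For the derivative at the interface, we differentiate the Duhamel formula at $r=R$ and use the boundary-gradient bound in \eqref{eq:ann-sem}:
\begin{equation*}
|\partial_r v_\lambda(t,R)|\le \int_0^t C(t-s)^{-1/2}\|F_\lambda(s)\|_\infty\dd s\le 2C\sqrt{T_1}\,\sup_{[0,T_1]}\|F_\lambda\|_\infty .
\end{equation*}
The right-hand side is $O(\varepsilon_\lambda+\lambda)$ by the uniform bound from the first step, which gives \eqref{eq:ba:non-c1con}.

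The main obstacle is justifying that $\partial_r v_\lambda(t,R)$ exists and is given by the integrated semigroup formula, i.e.\ that one may differentiate under the Duhamel integral. The kernel singularity $(t-s)^{-1/2}$ is integrable, so this is plausible. To make it rigorous, I would first note that $F_\lambda$ is bounded and continuous on $(0,T_1]\times[R,1]$. Then standard $W^{2,1}_p$ or Schauder theory for the uniformly parabolic problem on the annulus, which is regular because $r\ge R$, gives $v_\lambda\in C^{0,1}$ up to the boundary. Finally, one applies \eqref{eq:ann-sem} to a smooth approximation of $F_\lambda$ and passes to the limit.
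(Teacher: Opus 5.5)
Your proposal is correct and follows the paper's proof. Both use the same Duhamel formula for $w_\lambda-w_*$ with homogeneous data, the $L^\infty$ contraction of $S(t)$ for the uniform convergence, and the $t^{-1/2}$ boundary-gradient bound at $r=R$ for the interface derivative. The only variation is in the $\lambda$-uniform bound on $w_\lambda$: you use a quadratic Taylor remainder with a continuity argument, whereas the paper uses the global Lipschitz bound $|\mathcal R_\lambda(r,z)|\le 2|z|/R^2$ and a linear Gronwall inequality, which avoids the bootstrap and gives the rate $O(\lambda^2)$ instead of your $O(\lambda)$; either rate suffices for the lemma.
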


\begin{proof}
By the definition of $w_\lambda$ and the identity $\cL Q_\lambda=0$, we obtain
\begin{equation}\label{eq:ba:scaleq}
(w_\lambda)_t =\cA w_\lambda +\mathcal R_\lambda(r,w_\lambda),
\end{equation}
where
\begin{equation}\label{eq:ba:non-rem}
\mathcal R_\lambda(r,z) :=\frac z{r^2} -\frac{\sin(2Q_\lambda+2\lambda z)-\sin(2Q_\lambda)} {2\lambda r^2}.
\end{equation}
The initial and boundary values of $w_\lambda$ agree exactly with those of $w_*$.  Since $Q_\lambda\to\pi$ uniformly on $[R,1]$, the mean value theorem gives, for every fixed $M>0$,
\begin{equation}\label{eq:ba:rem-small}
\varepsilon_\lambda(M) :=\sup_{R\leq r\leq1,\ |z|\leq M} |\mathcal R_\lambda(r,z)|= O(\lambda^2) \; \; \textrm{ as } \lambda\rightarrow 0.
\end{equation}
Moreover,
\begin{equation}\label{eq:ba:rem-lip}
|\mathcal R_\lambda(r,z_1)-\mathcal R_\lambda(r,z_2)| \leq\frac2{R^2}|z_1-z_2| \textrm{ \; \; and  \; \;} \mathcal R_\lambda(r,0)=0.
\end{equation}

Set $Z_\lambda:=w_\lambda-w_*$.  It has homogeneous parabolic data and satisfies
\begin{equation}\label{eq:ba:non-duh}
Z_\lambda(t)=\int_0^tS(t-s) \mathcal R_\lambda \bigl(\cdot,w_*(s)+Z_\lambda(s)\bigr)\dd s.
\end{equation}
Since $\mathcal R_\lambda(r,0)=0$, estimate \eqref{eq:ba:rem-lip} gives
$|\mathcal R_\lambda(r,z)|\leq 2|z|/R^2$.
Using \eqref{eq:ba:non-duh}, the contraction estimate in \eqref{eq:ann-sem}, and $\|w_*(t)\|_{L^\infty(R,1)}\leq3\|\psi\|_{L^\infty(R,1)}$, we obtain
\begin{equation*}
\|w_\lambda(t)\|_{L^\infty(R,1)} \leq3\|\psi\|_{L^\infty(R,1)}
+\frac2{R^2}\int_0^t\|w_\lambda(s)\|_{L^\infty(R,1)}\dd s.
\end{equation*}
Hence, Gronwall's inequality gives
\begin{equation*}
\sup_{0\leq t\leq T_1}\|w_\lambda(t)\|_{L^\infty(R,1)}
\leq 3e^{2T_1/R^2}\|\psi\|_{L^\infty(R,1)}=: M_0.
\end{equation*}
In particular, $M_0$ is independent of $\lambda$.  We may therefore use $M=M_0$ in \eqref{eq:ba:rem-small}.  Returning to \eqref{eq:ba:non-duh} and using again the contraction estimate in \eqref{eq:ann-sem}, for $t\in [0, T_1]$, we obtain 
\begin{align*}
\|Z_\lambda(t)\|_{L^\infty(R,1)}
\leq\int_0^t \bigl\|\mathcal R_\lambda(\cdot,w_\lambda(s))\bigr\|_{L^\infty(R,1)} \dd s
\leq t\varepsilon_\lambda(M_0) \leq T_1\varepsilon_\lambda(M_0).
\end{align*}
This gives the proof of \eqref{eq:ba:non-unicon}.
\smallskip

Finally, since $(t-s)^{-1/2}$ is integrable, the second estimate in \eqref{eq:ann-sem} justifies differentiating \eqref{eq:ba:non-duh} at $R$.  Using the definition of $\varepsilon_\lambda(M_0)$, we obtain
\begin{align*}
|\partial_rZ_\lambda(t,R)|  \leq C\int_0^t(t-s)^{-1/2} \bigl\|\mathcal R_\lambda(\cdot,w_\lambda(s))\bigr\|_{L^\infty(R,1)}\dd s
\leq 2C\sqrt{t}\,\varepsilon_\lambda(M_0).
\end{align*}
Therefore, the inequality \eqref{eq:ba:non-c1con} holds:
\begin{equation*}
\sup_{0\leq t\leq T_1} |\partial_rZ_\lambda(t,R)|
\leq2C\sqrt{T_1}\,\varepsilon_\lambda(M_0) \longrightarrow 0.
\end{equation*}
\end{proof}

Finally, to preserve the favorable interface derivative, we set
\begin{equation}\label{eq:ba:intmar}
c_{T_1}:=-3e^{-30T_1}\psi'(R)>0.
\end{equation}
Since $\psi'(R)<0$, for every $t\in[0,T_1]$ we have
\begin{equation}\label{eq:ba:refder}
\partial_rw_*(t,R) =3e^{-30t}\psi'(R) \leq3e^{-30T_1}\psi'(R)= -c_{T_1}.
\end{equation}

Combining \eqref{eq:ba:refder} with \eqref{eq:ba:non-c1con}, we may choose $\lambda_*>0$ sufficiently small so that
\begin{equation*}
\sup_{0\leq t\leq T_1}
|\partial_rw_\lambda(t,R)-\partial_rw_*(t,R)|
<\frac12c_{T_1},
\quad \forall\lambda\in(0,\lambda_*).
\end{equation*}
Consequently,
\begin{equation}\label{eq:ba:negder}
\partial_rw_\lambda(t,R)\leq-\frac12c_{T_1}<0,
\quad \forall (t,\lambda)\in[0,T_1]\times(0,\lambda_*).
\end{equation}

\subsection{The upper barrier and removal of the \texorpdfstring{$\pi$}{pi}-crossing} \label{subsec:ba:rem}
With $\overline q_{\lambda,\out}$ the solution in \eqref{eq:ba:oupro}, 
we define the upper barrier by
\begin{equation}\label{eq:ba:gludef}
\overline q_\lambda(t,r):=
\begin{cases}
Q_\lambda(r),&\forall r\in[0,R],\\
\overline q_{\lambda,\out}(t,r),&\forall r\in[R,1].
\end{cases}
\end{equation}
At $t=0$, this is the upper profile $\overline q_{\lambda,0}$ in \eqref{eq:in:uppro}.  The two pieces agree at $R$, while \eqref{eq:ba:negder} gives
\begin{equation}\label{eq:ba:qujump}
[\partial_r\overline q_\lambda]_R =\lambda\partial_rw_\lambda(t,R)<0, \quad \forall t\in[0,T_1].
\end{equation}

By \eqref{eq:ba:non-unicon}, we may further reduce $\lambda_*$ so that
\begin{equation*}
\|w_\lambda-w_*\|_{L^\infty([0,T_1]\times(R,1))}<\frac14, \quad \forall\lambda\in(0,\lambda_*).
\end{equation*}
Since the positive-part map is Lipschitz and $r\leq1$, combining this estimate with \eqref{eq:mo:quan} gives
\begin{equation}\label{eq:cl:scale}
\max_{R\leq r\leq1}r\bigl(w_\lambda(T_1,r)\bigr)_+ \leq \max_{R\leq r\leq1}r\bigl(w_*(T_1,r)\bigr)_+ +\|w_\lambda-w_*\|_{L^\infty([0,T_1]\times(R,1))} <\frac12.
\end{equation}

On the other hand,
\begin{equation}\label{eq:cl:hargap}
\frac{r\bigl(\pi-Q_\lambda(r)\bigr)}{\lambda}=
\frac{2r}{\lambda}\arctan\left(\frac{\lambda}{r}\right) \longrightarrow 2
\end{equation}
uniformly for $r\in[R,1]$.  After reducing $\lambda_*$ once more, we therefore have
\begin{equation*}
\frac{r\bigl(\pi-Q_\lambda(r)\bigr)}{\lambda}>\frac32, \qquad \forall (r,\lambda)\in[R,1]\times(0,\lambda_*).
\end{equation*}
Combining this estimate with \eqref{eq:cl:scale}, we obtain
\begin{align*}
\overline q_{\lambda,\out}(T_1,r)-\pi &=\lambda w_\lambda(T_1,r)-\bigl(\pi-Q_\lambda(r)\bigr)\\
&\leq\frac{\lambda}{r} \left( r\bigl(w_\lambda(T_1,r)\bigr)_+ -\frac{r\bigl(\pi-Q_\lambda(r)\bigr)}{\lambda} \right)< 0
\end{align*}
for every $r\in[R,1]$.  Thus, $\overline q_{\lambda,\out}(T_1,r)<\pi $ for every  $r\in[R, 1]$, and therefore,  
\begin{equation}\label{in:ovqla:T1}
0\leq \overline q_\lambda(T_1,r)<\pi \quad \forall r\in[0,1].
\end{equation}

\subsection{Weak supersolution gluing and comparison up to time \texorpdfstring{$T_1$}{T_1}}
\label{subsec:weak-glu}

We use the weak subsolution and supersolution definitions for $v=q/r$ and the weak comparison principle from Appendix~\ref{subsec:weak-comp}. 

Set $\overline v_\lambda:=\overline q_\lambda/r$ and $\overline v_{\lambda,\out}:=\overline q_{\lambda,\out}/r$. 
Since the initial and boundary data in \eqref{eq:ba:oupro} are smooth and satisfy the zeroth-order compatibility condition, standard parabolic regularity on $[R, 1]$ gives $\overline v_{\lambda,\out} \in H^1\bigl(0,T_1;L^2(R,1)\bigr) \cap L^2\bigl(0,T_1;H^2(R,1)\bigr)$.
In particular, $\partial_r\overline v_{\lambda,\out}(\cdot,R+)\in L^2(0,T_1)$.
Moreover, $\overline v_{\lambda,\out}$ is classical on $[\tau,T_1]\times[R,1]$ for every $\tau\in(0,T_1)$.
\smallskip

Now, we show that $\overline v_\lambda$ is a weak supersolution, using Lemma \ref{lem:fix-int}. On the inner ball, $Q_\lambda(r)/r$ extends smoothly across the center and is stationary.  Since the two pieces have the same time-independent trace at $R$, they define a function
\begin{equation*}
\overline v_\lambda\in\mathcal W_{T_1}\cap C([0,T_1]\times[0,1]) \textrm{ \; with \; }
\partial_t\overline v_\lambda\in L^2(0,T_1;\mathcal H) \subset L^2(0,T_1;\mathcal V_0').
\end{equation*}
The regular residual $G$ vanishes on both pieces. The inner piece is smooth, while the one-dimensional trace gives both  $R^3\partial_r\overline v_\lambda(\cdot,R-)$ and  $R^3\partial_r\overline v_\lambda(\cdot,R+)$ belong to $L^2(0,T_1)$.
Since $\overline q_\lambda$ is continuous at $R$, \eqref{eq:ba:negder} gives
\begin{equation*}
[\partial_r\overline v_\lambda]_R =\frac1R[\partial_r\overline q_\lambda]_R
=\frac{\lambda}{R}\partial_rw_\lambda(t,R)<0.
\end{equation*}

Lemma~\ref{lem:fix-int} therefore shows that $\overline v_\lambda$ is a weak supersolution on $[0,T_1]$.
\vspace{2mm}

Let $q_0$ satisfy \eqref{eq:ma:data}, and let $g\in\mathcal G_\lambda$ satisfy $g(0)=q_0(1)$.  Denote the corresponding maximal mild solution by $q_\lambda^g$ (by Proposition~\ref{prop:loc-wp}), and set $v_\lambda^g(t,r):= q_\lambda^g(t,r)/r$.
By Lemma~\ref{lem:mild-energy}, $v_\lambda^g$ is both a weak subsolution and a weak supersolution on every compact time interval before $T_{\max}$.  The assumptions on $q_0$ and $g$ give
\begin{equation*}
0\leq v_\lambda^g(0,\cdot)\leq\overline v_\lambda(0,\cdot), \quad
0\leq v_\lambda^g(t,1)=g(t)\leq g_\lambda(t)=\overline v_\lambda(t,1), \quad \forall t\in[0,T_1].
\end{equation*}
Applying Proposition~\ref{prop:weak-comp} first to $0$ and $v_\lambda^g$, and then to $v_\lambda^g$ and $\overline v_\lambda$, we obtain, for every $0<T_0<\min\{T_1,T_{\max}\}$,
\begin{gather}
0\leq v_\lambda^g(t,r)\leq\overline v_\lambda(t,r)  \quad \forall  (t,r)\in[0,T_0]\times[0,1], \label{eq:ba:compr} \\
0\leq q_\lambda^g(t,r)\leq\overline q_\lambda(t,r) \quad \forall (t,r)\in[0,T_0]\times[0,1]. \label{eq:ba:comp}
\end{gather}

This comparison also shows that $T_{\max}>T_1$.  Indeed, on the core $[0, R]$, the explicit formula for $Q_\lambda$ gives
\begin{equation*}
0\leq\frac{q_\lambda^g(t,r)}r \leq\frac{Q_\lambda(r)}r \leq\frac2\lambda.
\end{equation*}
On the annulus $[R, 1]$, $\overline q_{\lambda,\out}/r$ is bounded on $[0,T_1]\times[R,1]$.  Hence, if $T_{\max}\leq T_1$, estimate \eqref{eq:ba:comp} would imply
\begin{equation*}
\sup_{0\leq t<T_{\max}}\left\|q_\lambda^g(t,\cdot)/r\right\|_{L^\infty(0,1)}
\leq \max\left\{ \frac2\lambda,\, \left\|\overline q_{\lambda,\out}/r\right\|_{L^\infty([0,T_1]\times[R,1])} \right\} <+\infty.
\end{equation*}
This contradicts the continuation alternative in Proposition~\ref{prop:loc-wp}.  Therefore, $T_{\max}>T_1$, and Proposition~\ref{prop:weak-comp} applied on $[0,T_1]$ gives \eqref{eq:ba:compr}--\eqref{eq:ba:comp} on the full cylinder $[0,T_1]\times[0,1]$. It also satisfies 
\begin{equation}\label{eq:cl:actbelow}
0\leq q^g_\lambda(T_1,r)<\pi, \; \;   0\leq v_\lambda^g(T_1,r)\leq\overline v_\lambda(T_1,r)\leq C,  \quad \forall  r\in [0,1], 
\end{equation}

\subsection{Global existence and avoidance of infinite time concentration} \label{subsec:ba:global}
The bound \eqref{eq:cl:actbelow} prevents further concentration through comparison with a stationary harmonic map.  We formulate the resulting continuation argument separately, since it also gives a criterion for excluding infinite-time concentration.

\begin{proposition}[Continuation after terminal clearing]
\label{prop:glo-con}
Let $q=rv$ be a nonnegative maximal mild solution furnished by Proposition~\ref{prop:loc-wp}, with boundary trace $g\in C^1_{\mathrm{loc}}([0,+\infty))$, and fix $T\in(0,T_{\max})$.  Suppose that $q(T,r)<\pi$ for every  $r\in[0,1]$, and that the future trace satisfies  $0\leq g(t)\leq g(T)$ for every  $t\in[T,+\infty)$.
Then the solution is global, namely $T_{\max}=+\infty$. 

If, in addition, $\sup_{t\geq T}|g'(t)|<+\infty$,
then this global solution satisfies
\begin{equation}\label{es:prcon}
\sup_{t\geq T}\|q_r(t,\cdot)\|_{L^\infty(0,1)}<+\infty.
\end{equation}
\end{proposition}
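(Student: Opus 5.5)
The plan is to dominate the solution after time $T$ by a single stationary harmonic map and then run the same continuation argument used for $T_{\max}>T_1$ in Subsection~\ref{subsec:weak-glu}.

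\emph{Step 1: a stationary barrier.} Since $q(T,\cdot)$ is continuous on the compact set $[0,1]$ and satisfies $q(T,r)<\pi$, we have $m:=\max_{[0,1]}q(T,\cdot)<\pi$. The harmonic maps $Q_\rho(r)=2\arctan(r/\rho)$ are increasing in $r$ and increase to $\pi$ pointwise as $\rho\to0^+$, but they are not uniform near the origin. Comparing $q(T,r)$ directly with $Q_\rho(r)$ near $r=0$ therefore requires control of $q(T,r)/r$. This is available: $q(T,\cdot)\in X_{\mathrm{cor}}$, so $q(T,r)/r\leq K$ for some $K$, and in the situation of Theorem~\ref{thm:main} this is exactly \eqref{eq:cl:actbelow}. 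Using $Q_\rho(r)/r\geq c/\rho$ on $[0,r_0]$ for small $r_0$ and small $\rho$, and the uniform convergence $Q_\rho\to\pi>m$ on $[r_0,1]$, we can choose $\rho>0$ with $q(T,\cdot)\leq Q_\rho$ on $[0,1]$. We also need $g(T)\leq Q_\rho(1)$, which follows from $g(T)=q(T,1)<\pi$ after shrinking $\rho$. The hypothesis $g(t)\leq g(T)$ for $t\geq T$ then gives $g(t)\leq Q_\rho(1)$ for all $t\geq T$.

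\emph{Step 2: comparison and global existence.} Apply Proposition~\ref{prop:weak-comp} in the variable $v=q/r$ on $[T,T']$ for each $T'<T_{\max}$, with $0$ as the subsolution and $Q_\rho/r$ as a smooth stationary supersolution. Both are admissible by Lemma~\ref{lem:mild-energy}, and the lower bound uses $g\geq0$. This yields
\[
0\leq q(t,r)/r\leq Q_\rho(r)/r\leq 2/\rho \quad\text{for all } t\in[T,T_{\max}).
\]
Together with the bound on $[0,T]$ from the mild solution, this gives $\sup_{t<T_{\max}}\|q/r\|_{L^\infty}<\infty$. If $T_{\max}<\infty$, this contradicts the continuation alternative of Proposition~\ref{prop:loc-wp}, so $T_{\max}=+\infty$.

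\emph{Step 3: uniform gradient bound.} Assume now that $\sup_{t\geq T}|g'|<\infty$. Since $v=q/r$ is uniformly bounded by $2/\rho$ for $t\geq T$, the plan is to apply parabolic estimates on unit time windows $[s,s+2]$, $s\geq T$, with constants independent of $s$. In the variable $v$, the equation is the radial four-dimensional heat equation with a nonlinearity that is bounded because $v$ is bounded:
\[
v_t=v_{rr}+\tfrac3r v_r+\frac{2rv-\sin(2rv)}{2r^3}.
\]
The source term is $O(rv^3)$ near $0$, and $O(1)$ away from $0$. Interior $W^{2,1}_p$ estimates near the origin then give a uniform H\"older bound on $\nabla v$ on $[s+1,s+2]\times[0,1/2]$.

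Near $r=1$, subtract the lifting $g(t)$ to obtain homogeneous Dirichlet data at the price of the source $g'(t)$, which is uniformly bounded by hypothesis. Boundary $W^{2,1}_p$ estimates with $p$ large, combined with Sobolev embedding, then give uniform bounds on $q_r$ on $[s+1,s+2]\times[1/2,1]$. Finally, $q_r=v+rv_r$ combines the two regions into \eqref{es:prcon} for $t\geq T+1$. On the compact interval $[T,T+1]$ the bound holds because the solution is global, classical for positive times, and $q/r$ is bounded there.

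The main obstacle is the uniformity in time of these estimates, namely ensuring that no constant depends on $s$. This is handled by using only the $L^\infty$ bound on $v$ and the bound on $g'$, each uniform in $s$. A secondary subtlety is the initial ordering $q(T,\cdot)\leq Q_\rho$ at the origin, which is exactly where the $X_{\mathrm{cor}}$ bound on $q(T,r)/r$ is needed.
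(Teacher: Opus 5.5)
Your proposal is correct. Steps 1 and 2 follow the paper's argument: dominate $q(T,\cdot)$ by a stationary $Q_\rho$, apply Proposition~\ref{prop:weak-comp} on $[T,T']$ against $0$ and $Q_\rho/r$, and conclude with the continuation alternative.

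One inequality in Step 1 is misstated. For fixed $r_0$ and $\rho\to0^+$, the bound $Q_\rho(r)/r\geq c/\rho$ fails near $r=r_0$, since $Q_\rho(r_0)/r_0\to\pi/r_0$. If instead $r_0\lesssim\rho$, the convergence $Q_\rho\to\pi$ on $[r_0,1]$ is not uniform. The fix, which is what the paper does, uses that $Q_\rho(r)/r$ is decreasing. First fix $r_0$ with $\pi/(2r_0)>K$, then choose $\rho$ with $Q_\rho(r_0)>\pi/2$. This gives $Q_\rho(r)/r\geq Q_\rho(r_0)/r_0>K$ on $(0,r_0]$.

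For the uniform gradient bound you take a different but valid route. The paper restarts the Duhamel formula for $U=V-g$ at time $t-1$. It then uses the Dirichlet semigroup bound $\|\nabla S_{D,4}(t)f\|_{L^\infty}\leq Ct^{-1/2}\|f\|_{L^\infty}$. Since $U$ and $H=F(|x|,U+g)-g'$ are bounded uniformly for $t\geq T$, this gives a time-independent gradient bound in one line. You instead use $W^{2,1}_p$ estimates on unit time windows plus Sobolev embedding, essentially Step 5 of the well-posedness proof. This is invariant under time translation, so the constants are uniform, and it yields slightly more, namely a uniform H\"older bound on the gradient. The cost is heavier machinery. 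Your interior/boundary split is also unnecessary: after a time cutoff, the global Dirichlet $W^{2,1}_p$ estimate for $U$ on $B^4$ covers both regions, with the origin an interior point of the lift.

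Two minor slips:
\begin{itemize}
\item Near $r=0$ one has $F(r,v)=O(v^3)$, not $O(rv^3)$; only boundedness is used, so nothing breaks.
\item On $[T,T+1]$ you should cite \eqref{eq:wp:pos-gra} rather than classical regularity. With $g$ only in $C^1_{\mathrm{loc}}$, Proposition~\ref{prop:wp-reg}(b) does not apply.
\end{itemize}
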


\begin{proof}
Set $v_T(r):= q(T,r)/r$,
where the quotient at $r=0$ is understood by continuous extension, and choose some  $M>\|v_T\|_{L^\infty(0,1)}$.
Recall that $Q_\rho(r)=2\arctan(r/\rho)$.  The function $Q_\rho(r)/r$ is decreasing in $r$, with its limit at $0$ being $2/\rho$.
Choose $r_0\in(0,1)$ such that $\pi/(2r_0)>M$, and then choose $\rho>0$ sufficiently small that $Q_\rho(r_0)>\pi/2$.  It follows that
\begin{equation*}
\frac{Q_\rho(r)}r \geq\frac{Q_\rho(r_0)}{r_0}
> M> \frac{q(T,r)}r, \quad \forall r\in(0,r_0].
\end{equation*}
By continuity, the same inequality holds at $r=0$.
\smallskip

On $[r_0,1]$, the strict inequality $q(T,\cdot)<\pi$ has a uniform gap.  Since $Q_\rho\to\pi$ uniformly on this interval as $\rho\to0^+$, we may reduce $\rho$ so that
$q(T,r)<Q_\rho(r)$ for every  $r\in[r_0,1]$.
Thus, $q(T,\cdot)<Q_\rho$ on $[0,1]$.  In particular, the future trace satisfies
\begin{equation*}
0\leq g(t)\leq g(T)=q(T,1)<Q_\rho(1),
\quad \forall t\in[T,+\infty).
\end{equation*}

By Definition~\ref{def:weak-subsuper}, the profile $v=q/r$ on $[T, T_{\max})$ is both a weak subsolution and a weak supersolution, while $Q_\rho/r$ is a smooth stationary solution.  Proposition~\ref{prop:weak-comp} therefore gives
\begin{equation}\label{eq:cont:vbound}
0\leq v(t,r)\leq \frac{Q_\rho(r)}{r}\leq\frac2\rho,
\quad \forall (t,r)\in[T,T_{\max})\times[0,1],
\end{equation}
moreover, 
\begin{equation}\label{eq:cont:qbound}
0\leq q(t,r)\leq Q_\rho(r),
\quad \forall (t,r)\in[T,T_{\max})\times[0,1],
\end{equation}
If $T_{\max}<+\infty$, then \eqref{eq:cont:vbound}, together with the boundedness of $v$ on $[0,T]$, contradicts the continuation alternative in Proposition~\ref{prop:loc-wp}.  Hence $T_{\max}=+\infty$.
\vspace{2mm}

Assume now that $\sup_{t\geq T}|g'(t)|<+\infty$. We consider the four-dimensional lift as in Appendix \ref{sec:wp-comp}. Define
\begin{equation*}
V(t,x):=v(t,|x|) \textrm{ \; and \; }
U(t,x):=V(t,x)-g(t).
\end{equation*}
Then $U$ has zero Dirichlet boundary value and satisfies
\begin{equation*}
U_t=\Delta_{\mathbb R^4}U+H(t,x),
\qquad
H(t,x):=F\bigl(|x|,U(t,x)+g(t)\bigr)-g'(t).
\end{equation*}
Estimate \eqref{eq:cont:vbound}, the bound on $g'$, and \eqref{eq:rv:react} show that
\begin{equation*}
\sup_{t\geq T} \left(\|U(t)\|_{L^\infty(B^4)} +\|H(t)\|_{L^\infty(B^4)}\right)
<+\infty.
\end{equation*}
For $t\geq T+1$, restart the mild formula at time $t-1$ and consider it on $[t-1, t+1]$. The Dirichlet heat-semigroup gradient estimate used in the proof of \eqref{eq:wp:pos-gra} gives
\begin{align*}
\|\nabla U(t)\|_{L^\infty(B^4)} \leq C\|U(t-1)\|_{L^\infty(B^4)} +C\int_{t-1}^t(t-s)^{-1/2}\|H(s)\|_{L^\infty(B^4)}\dd s
\leq C,
\end{align*}
where $C$ is independent of $t$.  Since $q_r=v+rv_r$ and $|\nabla V|=|\nabla U|$, we obtain
\begin{equation*}
\sup_{t\geq T+1}\|q_r(t,\cdot)\|_{L^\infty(0,1)}<+\infty.
\end{equation*}
Finally, the positive-time estimate \eqref{eq:wp:pos-gra}, applied on $[T, T+1]$, controls the remaining compact time interval.  Combining the two bounds finishes the proof.
\end{proof}

\subsection{Proof of Theorem~\ref{thm:main}}
\label{subsec:ba:main}

Fix the parameters furnished by Lemma~\ref{lem:annular-mode}.  We choose $\lambda_*>0$ sufficiently small.
Let $\lambda\in(0,\lambda_*)$, let $q_0$ satisfy \eqref{eq:ma:data}, and set $b_0:=q_0(1)$.  Then $b_0>\pi$.

\begin{itemize}[leftmargin=2.0em]
\item[(i)]  We first consider the constant boundary trace $b_0$.  Since the boundary trace is constant, Proposition~\ref{prop:wp-reg}\textup{(b)}, applied for any $k\geq1$, shows that the solution is smooth for every $t\in (0, T_{\max})$.  Fix $t_0>0$ small.  Restarting the solution at $t_0$ and applying the CDY criterion Theorem~\ref{thm:in:threshold}\textup{(i)}, we obtain finite-time blow-up.
\vspace{2mm}

\item[(ii)] We next consider the controlled branch.  Fix $g\in\mathcal G_\lambda$ satisfying $g(0)=b_0$.  The estimate \eqref{in:ovqla:T1} in Section~\ref{subsec:ba:rem}, together with the weak supersolution gluing and comparison argument in Section~\ref{subsec:weak-glu}, gives $T_{\max}>T_1$ and \eqref{eq:ba:comp}--\eqref{eq:cl:actbelow} on $[0,T_1]\times[0,1]$. 
    
Since $0\leq g(t)\leq g(T_1)$ for every $t\in[T_1,+\infty)$, Proposition~\ref{prop:glo-con}, applied with $T=T_1$, yields $T_{\max}=+\infty$.  Suppose in addition that \eqref{eq:ma:unider} holds, then  Proposition~\ref{prop:glo-con} gives uniform radial-gradient bound \eqref{es:prcon}, thus excludes infinite-time concentration. For every $\tau\in(0,T_1)$, the positive-time estimate \eqref{eq:wp:pos-gra} gives the corresponding bound on $[\tau,T_1]$.  Combining the two estimates proves \eqref{eq:ma:unigra}. This finishes the proof.
\end{itemize}

\section{Boundary controls producing arbitrarily fast blow-up}  \label{sec:ind-blow}

In this section we prove Theorem~\ref{thm:ind-blow}.
 We divide the proof into three steps: first choose the CDY initial scale $\lambda_0$, then prepare the required ordering by the boundary control, and finally freeze the boundary value and use weak comparison with CDY profile to force the blow-up.

\begin{proof}[Proof of Theorem~\ref{thm:ind-blow}]
Fix a nonnegative function $q_0\in X_{\mathrm{cor}}$ and a time $\mathcal T>0$.  In the proof we use the CDY profile given in Proposition~\ref{lem:cdy-pro}; we fix the parameters therein as $\varepsilon=1/2$, $\mu= 2\mu_0$, and $\delta= \kappa/\mu$.
\smallskip

\noindent\textit{Step 1. Choice of the collapse time and scale.} We first fix the preparation time for ordering as  $t_1= \mathcal T/4$, and the anticipated collapse time of the CDY core after the preparation as $\mathcal T/2$. 
We choose the ``initial'' scale $\lambda_0$ such that collapse time $2\sqrt{\lambda_0}/\delta\leq \mathcal T/2$.

\smallskip

\noindent\textit{Step 2. Preparation of the CDY ordering.} Choose a smooth nondecreasing function $\eta:[0,\infty)\to[0,1]$ such that
\begin{equation*}
\eta(0)=0, \quad \eta(t)>0\quad \forall t\in(0,+\infty), \textrm{ and \; } \eta(t)=1\quad \forall t\in[t_1,+\infty),
\end{equation*}
and such that $\eta$ is flat at $t=0$, namely $\eta^{(m)}(0)= 0$ for every integer $m\geq 1$.  Let $W$ be the radial mild solution in $B^4$ of
\begin{equation*}
\begin{cases}
W_t=\Delta_{\mathbb R^4}W,&\forall (t,x)\in\mathbb R^+\times B^4,\\
W(0,x)=0,&\forall x\in B^4,\\
W(t,x)=\eta(t),&\forall (t,x)\in\mathbb R^+\times\partial B^4.
\end{cases}
\end{equation*}
The parabolic strong maximum principle gives $W(t_1,x)>0$ for every $x\in B^4$, while $W(t_1,x)=1$ for $x\in\partial B^4$.  Since $W(t_1,\cdot)$ is continuous on $\overline{B^4}$, it follows that
\begin{equation*}
c_{t_1}:=\min_{x\in\overline{B^4}}W(t_1,x)>0.
\end{equation*}

We define the control $g(t):=q_0(1)+A\eta(t)$, where $A>0$ is chosen so large that
\begin{equation}\label{eq:ind:amp}
Ac_{t_1}>\frac2{\lambda_0}+\frac2\mu, \qquad q_0(1)+A>\pi+Q_\mu(1).
\end{equation}
The control is smooth and nonnegative, satisfies $g(0)=q_0(1)$, and is constant for $t\geq t_1$.
\smallskip

Let $q=rv$ be the corresponding maximal mild solution from Proposition~\ref{prop:loc-wp}.  Since $g$ is smooth, Proposition~\ref{prop:wp-reg}\textup{(b)} shows that the solution is classical for every positive time before $T_{\max}$.  If $T_{\max}\leq t_1$, the conclusion already follows.  We may therefore assume that $T_{\max}>t_1$.  

Comparison with zero gives $V\geq0$, where $V(t,x):=v(t,|x|)$.  Since
\begin{equation}\label{eq:ind:reasig}
F(r,z)\geq0, \quad \forall (r,z)\in[0,1]\times[0,+\infty),
\end{equation}
the difference $D:=V-AW$ satisfies
\begin{equation*}
D_t-\Delta_{\mathbb R^4}D=F(|x|,V)\geq0.
\end{equation*}
Moreover, $D(0,\cdot)=V_0\geq 0$, and  $D(t,\cdot)|_{\partial B^4}=q_0(1)\geq 0$.
The linear parabolic comparison principle therefore gives $V\geq AW$.  In particular,
\begin{equation}\label{eq:ind:linlow}
v(t_1,r)\geq Ac_{t_1},
\qquad \forall r\in[0,1].
\end{equation}

The elementary bound $\arctan s\leq s$ gives
\begin{equation*}
\frac{Q_{\lambda_0}(r)}r\leq\frac2{\lambda_0} \textrm{ \; and \;} \frac{Q_\mu(r^{3/2})}r \leq\frac{2r^{1/2}}\mu\leq\frac2\mu.
\end{equation*}
Combining these bounds with \eqref{eq:ind:amp}--\eqref{eq:ind:linlow}, we obtain
\begin{equation}\label{eq:ind:iniord}
Q_{\lambda_0}(r)+Q_\mu(r^{3/2})<q(t_1,r), \quad \forall r\in(0,1].
\end{equation}
\smallskip

\noindent\textit{Step 3. Collapse of the prepared CDY core.} For $t\geq t_1$, let $\ell$ solve
\begin{equation}\label{eq:ind:scale}
\ell'=-\delta\ell^{1/2}, \qquad \ell(t_1)=\lambda_0.
\end{equation}
It reaches zero at $t_b:=t_1+2\sqrt{\lambda_0}/\delta<\mathcal T$. By the standard CDY theory (see Proposition~\ref{lem:cdy-pro}),
\begin{equation}\label{eq:ind:dynlow}
\underline q(t,r):=Q_{\ell(t)}(r)+Q_\mu(r^{3/2})
\end{equation}
is a subsolution (and $\underline v$ is a weak subsolution).  Moreover, since $g$ is constant for $t\geq t_1$,
\begin{equation*}
\underline q(t,1)<\pi+Q_\mu(1)<g(t), \quad \forall t\in[t_1,t_b).
\end{equation*}
Combining this boundary ordering with \eqref{eq:ind:iniord} at time $t_1$,  the comparison principle Proposition~\ref{prop:weak-comp} therefore gives $q(t,r)\geq \underline q(t,r)$ on every subinterval of the common existence interval.  

Suppose that $T_{\max}>t_b$.  Since both $q$ and $\underline q$ are continuous, the preceding comparison holds for every $t\in[t_1,t_b)$, and \eqref{eq:cdy:gralow} gives
\begin{equation*}
\|q_r(t,\cdot)\|_{L^\infty(0,1)} \geq\frac{\pi}{2\ell(t)} \longrightarrow+\infty \qquad\text{as }t\to t_b.
\end{equation*}
This contradicts the positive-time gradient estimate
\eqref{eq:wp:pos-gra} on any closed interval containing $t_b$
and contained in $(0,T_{\max})$.
Hence $T_{\max}\leq t_b<\mathcal T$.  By Definition~\ref{def:wp:blowup} and Remark~\ref{rem:wp:cla-blow}, the controlled solution (which is classical) blows up in finite time before $\mathcal T$.  
\end{proof}

\section{Unavoidable blow-up under bounded boundary controls}
\label{sec:un-blow}

This section proves Theorem~\ref{thm:bou-con}.  The proof uses the same collapsing CDY profile as in Section~\ref{sec:ind-blow}, but the ordering is prepared in a different way.  In Section~\ref{sec:ind-blow}, a large boundary control raises the solution above a CDY profile on the whole interval.  Here the initial datum already dominates the CDY profile on a fixed inner region $[0,R]$.  The main issue is therefore to preserve the required ordering at the interface $R$.

Throughout this section, the CDY profile is used only on $[0,R]$. The outer parabolic evolution enters the argument only through the interface value $q(t,R)$.  We prepare the initial datum with a strict positive margin near $R$.  A short-time estimate shows that a boundary control with prescribed amplitude bound cannot remove this interface margin immediately.  We then choose the CDY scale so that the inner core collapses before the interface ordering can be lost.
\vspace{2mm}

\begin{figure}[H]
\centering
\begin{tikzpicture}[
  x=.92cm,
  y=.84cm,
  >=Latex,
  actual/.style={blue!70!black,very thick},
  cdy/.style={red!78!black,very thick},
  guide/.style={black!55,dashed},
  every node/.style={font=\small}
]

\begin{scope}

  \draw[->] (0,0) -- (6.12,0) node[right] {$r$};
  \draw[->] (0,-1.18) -- (0,3.18) node[above] {$q$};
  \draw[guide] (2,-1.10) -- (2,3.02);
  \draw[guide] (0,2.15) -- (2.30,2.15);
  \node[anchor=east] at (-.08,2.15) {$B$};
  \node[below] at (2,0) {$R$};
  \node[below] at (5.8,0) {$1$};
  \node[font=\normalsize] at (2.2,3.67) {$t=0$};
  \node[black!65] at (1.00,-.82) {inner region};
  \node[black!65] at (3.92,-.82) {outer region};

  \draw[cdy]
    plot[smooth] coordinates {
      (0,0)
      (.13,.40)
      (.32,.83)
      (.65,1.29)
      (1.05,1.61)
      (1.50,1.82)
      (2,1.94)
    };

  \draw[actual]
    plot[smooth] coordinates {
      (0,0)
      (.13,.47)
      (.32,.93)
      (.65,1.42)
      (1.05,1.77)
      (1.50,2.10)
      (2,2.55)
      (2.55,2.48)
      (3.25,2.15)
      (4.05,1.50)
      (4.78,.58)
      (5.30,.10)
      (5.8,0)
    };

  \draw[<->,black!70] (2.20,1.96) -- (2.20,2.53);

  \node[anchor=south west] at (2.28,2.61)
    {$q_0(R)>B>\underline q(0,R)$};

  \fill[actual] (2,2.55) circle (1.4pt);
  \fill[cdy] (2,1.94) circle (1.4pt);
\end{scope}

\draw[->,very thick] (6.28,1.55) -- (7.08,1.55);

\node[align=center] at (6.68,2.02)
  {short time\\$\ell(t)\downarrow0$};

\begin{scope}[xshift=7.35cm]

  \draw[->] (0,0) -- (6.12,0) node[right] {$r$};
  \draw[->] (0,-1.18) -- (0,3.48) node[above] {$q$};

  \draw[guide] (2,-1.10) -- (2,3.30);
  \draw[guide] (0,2.15) -- (2.30,2.15);

  \node[anchor=east] at (-.08,2.15) {$B$};
  \node[below] at (2,0) {$R$};
  \node[below] at (5.8,0) {$1$};

  \node[font=\normalsize] at (2.9,3.72)
    {$0<t<\tau_\lambda<\tau_0$};

  \node[black!65] at (1.00,-.82) {inner region};
  \node[black!65] at (3.92,-.82) {outer region};

  \draw[cdy]
    plot[smooth] coordinates {
      (0,0)
      (.04,.60)
      (.10,1.25)
      (.20,1.65)
      (.38,1.84)
      (.75,1.94)
      (1.30,2.01)
      (2,2.06)
    };

  \draw[actual]
    plot[smooth] coordinates {
      (0,0)
      (.04,.72)
      (.10,1.42)
      (.20,1.82)
      (.38,2.02)
      (.75,2.17)
      (1.30,2.34)
      (2,2.53)
      (2.60,2.35)
      (3.30,1.95)
      (4.10,1.35)
      (4.75,.70)
      (5.10,.38)
    };

  \draw[actual,dashed]
    plot[smooth] coordinates {
      (5.10,.38)
      (5.35,.25)
      (5.55,-.05)
      (5.68,-.45)
      (5.76,-.90)
      (5.80,-1.08)
    };

  \draw[guide] (.10,0) -- (.10,1.25);
  \fill[cdy] (.10,1.25) circle (1.4pt);
  \node[anchor=west,align=left] at (.22,.93)
    {$r=\ell(t)$\\$\underline q\geq\pi/2$};

  \draw[<->,black!70] (2.20,2.08) -- (2.20,2.51);
  \node[anchor=south west] at (2.28,2.61)
    {$q(t,R)>B>\underline q(t,R)$};
  \fill[actual] (2,2.53) circle (1.4pt);
  \fill[cdy] (2,2.06) circle (1.4pt);

  \fill[actual] (5.8,-1.08) circle (1.5pt);

  \node[anchor=west,align=left] at (5.93,-.98)
    {$g(t)=-M$\\$M$ large};
\end{scope}

\draw[actual] (3.80,-1.62) -- (4.35,-1.62);
\node[anchor=west] at (4.43,-1.62) {solution};

\draw[cdy] (6.05,-1.62) -- (6.60,-1.62);
\node[anchor=west] at (6.68,-1.62) {CDY lower profile};

\end{tikzpicture}

\caption{Schematic of the localized CDY comparison.  Even under large control $g(t)=-M$, represented by the dashed boundary layer, the interface ordering $q(t, R)>B>\underline q(t, R)$ persists long enough for the inner CDY scale to collapse.}
\label{fig:un:interface}
\end{figure}

\subsection{A short-time estimate controlling the interface}
We first isolate the following useful lemma to control the interface change.
\begin{lemma} \label{lem:un:interface}
Let $0<R< 1/2$, $B>0$, and $M> 0$.  Suppose that $\rho\in (0, R)$ and  $R K\in (B, +\infty)$.
There exists $\tau_0=\tau_0(R, \rho, B, K, M)>0$ such that the following property holds.

Let $\mathcal{T}> 0$. Let $q_0\in X_{\mathrm{cor}}$ be nonnegative, with $q_0(1)=0$, and write $v_0=q_0/r$.  Suppose that $v_0(r)\geq K$ for every  $r\in[R-\rho,R+\rho]$.
Let $g\in C^1_{\mathrm{loc}}([0,+\infty))$ satisfy $g(0)=0$ and $\|g\|_{L^\infty(0, \mathcal{T})}\leq M$.
Then the corresponding maximal mild solution on $[0, T_{\max})$ satisfies
\begin{equation}\label{eq:un:interface-con}
q(t,R)>B, \quad \forall 0\leq t<\min\{\tau_0, \mathcal{T}, T_{\max}\}.
\end{equation}
In particular, $\tau_0$ is independent of the particular initial profile and boundary control satisfying the preceding assumptions.
\end{lemma}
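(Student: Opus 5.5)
The plan is a local subsolution comparison carried out in the four-dimensional lift $V(t,x)=v(t,|x|)$, where $v=q/r$. There the equation reads $V_t=\Delta_{\mathbb R^4}V+F(|x|,V)$ with boundary value $V=g$ on $\partial B^4$. We only use two facts from the appendix. First, comparison with zero gives $V\geq0$ as long as $g\geq0$. Second, for $V\geq0$ the reaction term is bounded below, as in \eqref{eq:ind:reasig}. Since $g$ may be negative, positivity is not guaranteed. So instead we use the cruder bound that $F(r,z)=(2z-\sin(2rz)/r)/(2r^2)$ type reaction is globally bounded below by $-C_R|z|$ on any annulus $r\geq r_1>0$, where $C_R$ depends only on $r_1$. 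We therefore work only on the annulus $\Omega:=\{R-\rho<|x|<R+\rho\}\subset B^4$. It stays away from both the origin and the outer boundary, since $R+\rho<2R<1$. On $\Omega$ the reaction is globally Lipschitz with constant $L=L(R,\rho)$ and vanishes at $z=0$.

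\textbf{Step 1: lower barrier on the annulus.} Let $\Theta$ solve the linear problem $\Theta_t=\Delta_{\mathbb R^4}\Theta-L|\Theta|$ (or simply $\Theta_t=\Delta\Theta-L\Theta$ once $\Theta\geq0$) on $\Omega$. Take initial value $K$ and lateral boundary value $\min\{V,\dots\}$. The lateral values are not controlled, so instead take $\Theta$ with lateral value $-N$ for a suitable $N$.

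This requires an a priori lower bound on $V$ on $\partial\Omega$ for short time. To get it, compare $V$ on all of $B^4$, away from the origin, using the global bound $|F(r,z)|\leq C(|z|/r^2)$. A cleaner route is to compare $q$ itself. Since $|\sin(2q)|/(2r^2)\leq |q|/r^2$, we have $q\geq -M$ by comparison with the constant subsolution $-M$. Indeed $\cL(-M)=\sin(2M)/(2r^2)$, which has the wrong sign in general. So instead use $\underline q=-\min(M,\pi/2)$ when $M\leq\pi/2$; for general $M$, argue on $q$ with the bound $q\geq -M e^{Ct}$ on $[R-\rho,1]$, obtained from the linearized comparison using Lipschitz constant $1/(R-\rho)^2$. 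This yields $q\geq -M'$ on $[0,\tau]\times[R-\rho,1]$ with $M'$ depending only on $(R,\rho,M)$. Near the origin no bound is needed, because $q_0\geq0$ and $q(t,0)=0$. On $[0,R-\rho]$ we use the annulus trick instead: its inner lateral boundary is controlled by the same exponential bound applied on $[R-\rho,1]$, after also bounding $q(t,R-\rho)$ below. Comparison on $[0,R-\rho]$ with data $q_0\geq0$, lateral value $\geq -M'$, and $\sin$-nonlinearity with $q/r^2$ singular is the delicate part. I would avoid it by taking the comparison domain to be $[R-\rho,R+\rho]$ and bounding the lateral values at $R\pm\rho$ by the global $L^\infty$ lower bound $q\geq -M''$, valid for $t\leq 1$. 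That bound comes from comparing $q$ with $-\pi$ on the whole disk whenever $M\leq\pi$, and for larger $M$ from monotone constant subsolutions $-k\pi$ with $k\pi\geq M$. Indeed $\cL(-k\pi)=0$, so $-k\pi$ is an exact stationary solution below all data. Hence $q\geq -k\pi$ globally with $k=\lceil M/\pi\rceil$, a clean bound independent of time.

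\textbf{Step 2: interface estimate.} On the strip $[R-\rho,R+\rho]$, the function $w=q-\underline\Theta$ is handled by linear comparison. Here $\underline\Theta$ solves the uniformly parabolic linear equation $\Theta_t=\Theta_{rr}+\Theta_r/r-L'\,|\Theta-\pi\cdot|$. Concretely, I will use the Lipschitz form: $\cL$ differs from the linear operator by a term bounded by $|q-q'|/(R-\rho)^2$. The data for $\underline\Theta$ are initial value $rK$ and lateral value $-k\pi$. Its value at $(t,R)$ is continuous in $t$ and equals $RK>B$ at $t=0$, with a modulus depending only on $(R,\rho,K,M)$, because the lateral jump sits at distance $\rho$ from $R$. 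Hence there is $\tau_0$ with $\underline\Theta(t,R)>B$ for $t<\tau_0$. By comparison, $q(t,R)\geq\underline\Theta(t,R)>B$, using the weak comparison of Proposition~\ref{prop:weak-comp} restricted to the strip.

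\textbf{Main obstacle.} The main obstacle is making the strip comparison rigorous. It requires the nonlinearity to be written as a linear term with a bounded coefficient, $c(t,r)=(\sin 2q-\sin 2\underline\Theta)/(2r^2(q-\underline\Theta))$, of size $\leq (R-\rho)^{-2}$, together with enough regularity of the mild solution at $r=R\pm\rho$. I expect both to follow from interior parabolic regularity away from $r=0$ and from the maximum principle for linear equations with bounded zeroth-order coefficients.
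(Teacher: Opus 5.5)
Your final argument, once the discarded attempts are stripped away, is correct, but it takes a different route from the paper. The paper stays in the lifted variable on the whole ball. It uses the spatially constant ODE barrier $y_M(t)=-M(1-\tfrac43M^2t)^{-1/2}$, which is a subsolution because $F(r,\xi)\geq\tfrac23\xi^3$ for $\xi\leq0$, uniformly down to $r=0$. Monotonicity of $F$ then makes $Z=V-y_M$ a supersolution of the \emph{linear} Dirichlet heat equation on $B^4$ with $Z(0)\geq(K+M)\chi_4$. Hence $V\geq y_M+(K+M)S_{D,4}(t)\chi_4$, and $\tau_0$ comes from continuity of the semigroup at $Re_1$.

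You instead use that $-k\pi$, with $k=\lceil M/\pi\rceil$, is an exact stationary solution of the $q$-equation. This gives a time-independent bound $q\geq-k\pi$, with no constraint like $\tau_0<3/(4M^2)$. You then localize to $[R-\rho,R+\rho]$, where the equation is uniformly parabolic with a globally Lipschitz nonlinearity, and compare with one fixed barrier depending only on $(R,\rho,K,M)$. Your route needs only the $\pi$-periodicity of $\sin(2q)$ and the Lipschitz bound $r^{-2}$ away from the origin, not the cubic or monotone structure of $F$. The paper's route buys that every comparison function is bounded and lies in $\mathcal W_S$, so Proposition~\ref{prop:weak-comp} applies verbatim. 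Your barrier corresponds to $v=-k\pi/r\notin\mathcal W_S$, so it needs a separate argument at the origin.

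When you write it up, make three points explicit.
\begin{enumerate}
\item The reason no bound is needed near the origin is not ``$q_0\geq0$ and $q(t,0)=0$''. It is that $|q(t,r)|\leq r\|v\|_{L^\infty([0,S]\times[0,1])}$ for $S<T_{\max}$. This keeps $q+k\pi>0$ on $[0,S]\times[0,r_1]$, so a negative minimum of $q+k\pi$ can only occur where $r\geq r_1$. There the linearized coefficient $\sin(2q)/(2r^2(q+k\pi))$ is bounded by $r_1^{-2}$.
\item In Step~2 the strip barrier must satisfy the \emph{nonlinear} inequality $\underline\Theta_t-\cL\underline\Theta\leq0$. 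Otherwise the linearization leaves the unsigned residual $-\sin(2\underline\Theta)/(2r^2)$. Take $\underline\Theta$ solving the nonlinear equation on the strip, or $\Theta_t=\Theta_{rr}+\Theta_r/r-(R-\rho)^{-2}|\Theta|$, and drop the garbled ``$-L'|\Theta-\pi\cdot|$''.
\item The corner mismatch between the initial value $rK$ and the lateral value $-k\pi$ is harmless in the energy formulation. Alternatively, remove it with a cutoff, as the paper does with $\chi_4$.
\end{enumerate}
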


\begin{proof}
Choose a smooth radial function
$\chi_4\in C_c^\infty(B^4)$ such that
\begin{equation*}
0\leq\chi_4\leq1, \quad \operatorname{supp}\chi_4 \subset\{x\in B^4:R-\rho<|x|<R+\rho\},
\quad \chi_4(Re_1)=1.
\end{equation*}
Let $V_0(x):=v_0(|x|)$. Since $q_0$ is nonnegative and $v_0\geq K$ on
$[R-\rho,R+\rho]$, we have
\begin{equation}\label{eq:un:cutord}
V_0\geq K\chi_4
\qquad\text{on }B^4.
\end{equation}

The purpose of the following function is to provide a lower bound that depends only on  $M$.  For $M>0$, define the spatially constant radial function
\begin{equation*}
y_M(t):= -\frac{M}{\sqrt{1-\frac43M^2t}}, \quad \forall 0\leq t<\frac{3}{4M^2}.
\end{equation*}
A direct differentiation gives
\begin{equation*}
y_M'(t) = -\frac{2M^3}{3} \left(1-\frac43M^2t\right)^{-3/2}
= \frac23y_M(t)^3.
\end{equation*}

Recall that $F(r,0)=0$.  By \eqref{eq:rv:react}, for every $\xi\leq0$,
\begin{align*}
F(r,\xi)= -\int_\xi^0\partial_sF(r,s)\dd s \geq-\int_\xi^02s^2\dd s =\frac23\xi^3.
\end{align*}
Since $y_M(t)\leq0$, and $y_M$ is spatially independent, it follows  that
\begin{equation*}
y_M' -y_{M,rr} -\frac3r y_{M,r} -F(r,y_M)= 
y_M'-F(r,y_M)
\leq0.
\end{equation*}
Thus, $y_M$ is a pointwise subsolution of \eqref{eq:rv:radeq} (and a weak subsolution in the sense of Definition~\ref{def:weak-subsuper}).  Moreover,
$y_M(0)=-M$ and $y_M(t)\leq -M$ is helpful for comparison.

\vspace{2mm}

Comparing $v$ and $y_M$, their initial and boundary orderings satisfy
\begin{equation*}
v_0\geq0\geq y_M(0) \textrm{ \; and \;\; }  g(t)\geq-M\geq y_M(t)\;\; \forall  0<t<\min\{\mathcal T,\frac{3}{4M^2}\}.
\end{equation*}
Proposition~\ref{prop:weak-comp} therefore gives $v(t,r)\geq y_M(t)$ throughout the common existence interval. Next, we further improve this estimate.

Let $V(t,x):=v(t,|x|)$ and $Z(t,x):=V(t,x)-y_M(t)$.
Subtracting the weak subsolution inequality for $y_M$ from the weak equation for $V$, and using the monotonicity of $F$, we obtain
\begin{equation*}
Z_t-\Delta_{\mathbb R^4}Z \geq F(|x|,V)-F(|x|,y_M) \geq 0
\end{equation*}
in the weak sense.  Moreover, on the boundary,
\begin{equation*}
Z(t,x)=g(t)-y_M(t)\geq0, \quad \forall x\in\partial B^4,
\end{equation*}
and \eqref{eq:un:cutord} gives the initial ordering,
\begin{equation*}
Z(0)=V_0+M \geq K\chi_4+M \geq (K+M)\chi_4.
\end{equation*}

Set $H(t,x):=(K+M)(S_{D,4}(t)\chi_4)(x)$, where $S_{D, 4}$ is the Dirichlet heat semigroup on $B^4$. Then $H$ solves the linear 4D heat equation with Dirichlet boundary and initial value $(K+M)\chi_4$.  Since $Z$ is a weak
supersolution of the same equation, 
the linear comparison principle gives
\begin{equation*}
 Z(t, x)\geq (K+M) (S_{D,4}(t)\chi_4)(x), \;\;  \forall  0<t<\min\{\mathcal T, T_{\max}, \frac{3}{4M^2}\}.
\end{equation*}
Thus
\begin{equation}\label{eq:un:semlow}
V(t, x) \geq y_M(t)+(K+M) (S_{D,4}(t)\chi_4)(x),
\end{equation}
as long as the solution and $y_M$ are defined.

Since $\chi_4\in C_0(\overline{B^4})$, the strong continuity of the Dirichlet semigroup gives
\begin{equation*}
    \bigl(S_{D,4}(t)\chi_4\bigr)(Re_1) \longrightarrow\chi_4(Re_1)=1 \quad\text{as }t\to0^+.
\end{equation*}
Moreover,
\begin{equation*}
  R\bigl(y_M(0)+(K+M)\chi_4(Re_1)\bigr)=RK> B.  
\end{equation*}
We may therefore choose $0<\tau_0<\frac{3}{4M^2}$, depending on $(R, \rho, B, K, M)$,
so small that
\begin{equation}\label{eq:un:shitime}
R\left[ y_M(t)+(K+M)\bigl(S_{D,4}(t)\chi_4\bigr)(Re_1) \right]>B, \quad \forall t\in[0, \min \{\tau_0, \mathcal{T}, T_{\max}\}).
\end{equation}

Combining \eqref{eq:un:semlow} and \eqref{eq:un:shitime} proves \eqref{eq:un:interface-con}.
\end{proof}

\subsection{The proof of Theorem~\ref{thm:bou-con}}
Given $\varepsilon_*>0$, $M> 0$, and $\mathcal T>0$.  As in Section~\ref{sec:ind-blow}, we use Proposition~\ref{lem:cdy-pro} with $\varepsilon=1/ 2$. We also fix the interface as $R:=1/10$. 
\smallskip

\noindent\textit{Step 1. The inner CDY profile and the initial ordering.}
For $\mu\geq\mu_0$ and a positive scale function $\ell(t)$ with $\ell(0)= \lambda$,  to be fixed later on, we consider the standard CDY profile restricted to the inner region:
\begin{equation}\label{eq:un:cdycore}
\underline q(t,r):= Q_{\ell(t)}(r)+Q_\mu(r^{3/2}),  \; \textrm{ for }  0\leq r\leq R.
\end{equation}
Correspondingly, we seek an initial profile which agrees on $[0, R]$ with
\begin{equation}\label{eq:un:innerdata}
q_{0,\lambda}(r)= Q_\lambda(r)+ar, \quad \forall r\in [0, R],
\end{equation}
where $a$ is a constant to be chosen. Following Proposition~\ref{lem:cdy-pro}, we define the collapse-rate coefficient and the uniform CDY bound at the interface $R$ by
\begin{equation}\label{eq:un:struct}
\delta:=\frac{\kappa}{\mu} \textrm{ \; and \; } B:=\pi+Q_\mu(R^{3/2}).
\end{equation}
Indeed, for every positive scale $\ell$, one has the uniform bound for $\underline q(t, R)$:
\begin{equation}\label{eq:un:cdy-interface}
Q_\ell(R)+Q_\mu(R^{3/2})<B.
\end{equation}

Our idea of design is to ensure two properties: the initial datum dominates the CDY profile on $[0, R]$, and it has a strict interface margin at $r=R$ which, by Lemma~\ref{lem:un:interface}, persists for a short time.
\begin{itemize}[leftmargin=2.2em]
\item To guarantee {\it the initial ordering on $[0,R]$}, namely $ \underline q(0, r)\leq q_{0, \lambda}(r)< \pi+ \varepsilon_*\;  \forall r\in [0, R]$, observe that
\begin{equation}\label{eq:un:tail}
Q_\mu(r^{3/2})\leq \frac{2r^{3/2}}{\mu}
\leq\frac{2R^{1/2}}{\mu}r, \quad \forall r\in(0,R].
\end{equation}
Thus, it suffices to require $2R^{1/2}/\mu< a< \varepsilon_*$. We close these parameter choices by taking
\begin{equation}\label{eq:un:muchoice}
\mu:=\max\left\{2\mu_0,\frac{8R^{1/2}}{\varepsilon_*}\right\}
\textrm{ \; and \; }  a:=\frac{\varepsilon_*}{2}.
\end{equation}

\item We next fix the {\it interface height at R}.  Since
\begin{equation*}
\frac BR = \frac\pi R+\frac{Q_\mu(R^{3/2})}{R}<  \frac\pi R+\frac{2R^{1/2}}{\mu} < \frac\pi R+a,
\end{equation*}
we may define
\begin{equation}\label{eq:un:Kdef}
K:= \frac12\left( \frac BR+\frac\pi R+a \right),
\end{equation}
which satisfies both  $RK>B$ and  $K<\pi/R+a$. \\
Then, we further fix $0<\rho<R$ and $\lambda_1>0$ small such that for the initial data $q_{0, \lambda}$ there is
\begin{equation}\label{eq:un:intheight}
\frac{Q_\lambda(r)}r+a\geq K, \quad \forall r\in[R-\rho,R+\rho],  \; \forall 0<\lambda\leq\lambda_1.
\end{equation}
\end{itemize}

Finally, we are able to define the initial data $q_{0, \lambda}$ on the whole interval $[0, 1]$. 
Choose a smooth radial cutoff $\zeta$ on $[0, 1]$ such that
\begin{equation*}
0\leq\zeta\leq1, \qquad \zeta=1\; \text{on }[0,R+\rho], \quad \textrm{and } \zeta=0\; \text{ near } r=1,
\end{equation*}
and define
\begin{equation}\label{eq:un:initial}
v_{0,\lambda}(r) :=  \zeta(r)\left(  \frac{Q_\lambda(r)}r+a \right), \quad q_{0,\lambda}(r):=rv_{0,\lambda}(r), \; \forall r\in [0, 1].
\end{equation}
Here $q_{0,\lambda}$ is a smooth nonnegative one-corotational profile and vanishes near $r=1$.
\smallskip

In conclusion, under the above selections of parameters $(\mu, a, \delta, B, K, \rho, \lambda_1)$, for any $\lambda\in (0, \lambda_1)$ we have 
\begin{equation}\label{eq:un:iniord}
    \begin{cases}
    q_{0,\lambda}(r)= Q_\lambda(r)+ar
> Q_\lambda(r)+Q_\mu(r^{3/2}), &\quad \forall r\in (0, R], \\
v_{0,\lambda}(r)\geq K, \quad & \forall r\in[R-\rho,R+\rho],\\
0\leq q_{0,\lambda}(r) \leq\pi+a
<\pi+\varepsilon_*, & \forall r\in [0, 1].
    \end{cases}
\end{equation}

\smallskip

\noindent\textit{Step 2. Short-time preservation of the interface ordering.}
Apply Lemma~\ref{lem:un:interface} with the constants $R,\rho,B,K$, and $M$ fixed above.  We obtain a time
$\tau_0>0$, which is independent of $\lambda\in(0,\lambda_1]$ and of the particular boundary control.

More precisely, for every $g\in C^1_{\mathrm{loc}}([0,+\infty))$ satisfying
\begin{equation*}
g(0)=0, \qquad \|g\|_{L^\infty(0,\mathcal T)}\leq M,
\end{equation*}
the maximal mild solution with initial value $q_{0,\lambda}$ (defined in \eqref{eq:un:initial}) satisfies
\begin{equation}\label{eq:un:intshi}
q(t,R)>B, \quad \textrm{ for }  0\leq t<\min\{\tau_0,T_{\max}, \mathcal T\}.
\end{equation}

\smallskip

\noindent\textit{Step 3. Collapse in the inner CDY region.}
Select $0<\lambda< \min \{\lambda_1, R\}$ so that
\begin{equation}\label{eq:un:colwin}
\tau_\lambda:= \frac{2\sqrt{\lambda}}{\delta} < \min\{\tau_0,\mathcal T\}.
\end{equation}
Fix the initial datum $q_0:=q_{0,\lambda}$ by \eqref{eq:un:initial},
and let $g$ be any boundary control satisfying the assumptions of the theorem.  Denote the corresponding maximal mild solution by $q$.

If $T_{\max}\leq\tau_\lambda$, then $T_{\max}<\mathcal T$, and the conclusion follows.  We may therefore suppose that
$T_{\max}>\tau_\lambda$.
By solving the scale equation
\begin{equation*}
\ell'=-\delta\ell^{1/2} \textrm{ \; with \; } \ell(0)=\lambda,
\end{equation*}
we obtain
\begin{equation}\label{eq:un:scale}
\ell(t):= \left(\sqrt{\lambda}-\frac{\delta t}{2}
\right)^2, \; \textrm{ for }  0\leq t<\tau_\lambda.
\end{equation}
On the inner region $[0,R]$, set
\begin{equation}\label{eq:un:insub}
\underline q(t,r) := Q_{\ell(t)}(r)+Q_\mu(r^{3/2}).
\end{equation}
Proposition~\ref{lem:cdy-pro} shows that $\underline q/r$ is a weak subsolution on every compact time interval contained in $[0,\tau_\lambda)$.

\smallskip
\smallskip

We would like to compare $q/r$ and $\underline q/r$ in $[0, R]$\footnote{Indeed, the proof of Proposition~\ref{prop:weak-comp} applies on $(0, R)$, with the weighted spaces defined on $(0,R)$ and with zero test trace at $r=R$. }.   Clearly, $\underline q/r$ is weak subsolution and $q/r$ is a weak solution in the sense of Definition~\ref{def:weak-subsuper} with $[0, 1]$ replaced by $[0, R]$.
At the initial time, \eqref{eq:un:iniord} gives
\begin{equation*}
\underline q(0,r)< q_{0, \lambda}(r)= q_0(r), \quad \forall r\in (0, R].
\end{equation*}
At the interface, \eqref{eq:un:cdy-interface}, \eqref{eq:un:intshi}, and \eqref{eq:un:colwin} give
\begin{equation*}
\underline q(t,R)<B<q(t,R), \quad \forall t\in [0, \tau_\lambda).
\end{equation*}
Thus Proposition~\ref{prop:weak-comp} gives that for every $T\in (0, \tau_{\lambda})$,
\begin{equation}\label{eq:un:innercomp0}
\underline q(t,r)/r\leq q(t,r)/r  \quad\text{for a.e. }(t,r)\in(0, T)\times(0,R).
\end{equation}
Since both profiles are continuous, the comparison holds pointwise, and therefore
\begin{equation}\label{eq:un:innercomp}
\underline q(t,r)\leq q(t,r)  \quad\text{for every }(t,r)\in(0, \tau_{\lambda})\times(0,R).
\end{equation}

Since $\ell(t)\leq\lambda<R$, we may evaluate \eqref{eq:un:innercomp} at $r=\ell(t)$.  This gives
\begin{equation*}
q(t,\ell(t)) \geq Q_{\ell(t)}(\ell(t)) +Q_\mu(\ell(t)^{3/2}) \geq\frac\pi2.
\end{equation*}
Since $q(t,0)=0$, the mean value theorem yields
\begin{equation*}
\|q_r(t,\cdot)\|_{L^\infty(0,R)} \geq \frac{\pi}{2\ell(t)} \longrightarrow+\infty \qquad\text{as }t\to\tau_\lambda^-.
\end{equation*}
On the other hand, the assumption $T_{\max}>\tau_\lambda$ allows us to choose $0<\tau<\tau_\lambda<S<T_{\max}$.
The positive-time gradient estimate \eqref{eq:wp:pos-gra} on $[\tau, S]$ gives a contradiction. Hence $T_{\max}\leq\tau_\lambda<\mathcal T$. Therefore, the maximal mild solution blows up before $\mathcal T$.  Finally, \eqref{eq:un:iniord} gives
\begin{equation*}
\|q_0\|_{L^\infty(0,1)} \leq\pi+\varepsilon_*,
\end{equation*}
which finishes the proof.

\appendix
\section{Well-posedness, blow-up, and weak comparison}   \label{sec:wp-comp}

In this appendix, we establish the analytic framework used throughout the paper. In Sections \ref{subsec:reg-var}--\ref{subsec:wp-mild} we first present the mild-solution theory, continuation criterion, and blow-up terminology through the four-dimensional radial lift formulation.  We then, in Section \ref{subsec:weak-comp}, give the weak comparison principle and the fixed-interface gluing lemma for weak (super, sub)-solutions, whose favorable interface flux is central to our construction.

\subsection{The auxiliary four-dimensional lift} \label{subsec:reg-var}

Let $q(t,r)$ denote the one-corotational angle, with $q(t,0)=0$, and define the radial profile of the lifted variable by $v(t,r):=q(t,r)/r$ for $r\in(0,1]$.  Whenever the limit exists, we set $v(t,0):=\lim_{r\to0^+}q(t,r)/r$.  Although $v$ depends only on the scalar radius $r$, it determines the radial function $V(t,x):=v(t,|x|)$ for $x\in B^4$. Thus, $v(t,\cdot)\in C([0,1])$ is equivalent to $V(t,\cdot)\in C_{\mathrm{rad}}(\overline{B^4})$.  For a classical one-corotational angle, $q(t,0)=0$ also gives $v(t,0)=q_r(t,0)$.   Throughout this appendix, $q$ denotes the one-corotational angle, $v$ its lifted radial profile, and $V$ the corresponding radial function on $B^4$.

 We use the above dimension-raising method to formulate the boundary-value mild theory; this technique is standard \cite{Angenent-Hulshof-Matano-2009,Jendrej-Lawrie-2023}. The reason for introducing this lift is analytic.  Equation \eqref{eq:flow:radial} can be written as
\begin{equation*}
q_t-q_{rr}-\frac1r q_r+\frac q{r^2} =\frac{2q-\sin(2q)}{2r^2}.
\end{equation*}
A direct weak theory in the $q$-variable must therefore handle an inverse-square potential together with the weighted trace and flux at the origin.  Nevertheless, observe that,
\begin{equation}\label{eq:rv:trans}
q_t-\cL q =r\left(v_t-v_{rr}-\frac3r v_r-F(r,v)\right),
\end{equation}
where, for $r\in(0,1]$ there is
\begin{equation}\label{eq:rv:react}
\begin{aligned}
F(r,z)&:=\frac{2rz-\sin(2rz)}{2r^3},\\
F(r,z)&=\frac23z^3-\frac{2}{15}r^2z^5+O(r^4z^7) \quad\text{as }r\to0^+,\\
0\leq\partial_zF(r,z) &=\frac{1-\cos(2rz)}{r^2}\leq2z^2.
\end{aligned}
\end{equation}
The power series of the sine shows that the second line holds locally uniformly for bounded $z$ and that $(x,z)\mapsto F(|x|,z)$ extends smoothly across $x=0$.  The last line gives a uniform Lipschitz constant on every bounded range of $z$. Thus, the one-dimensional radial profile $v$ satisfies
\begin{equation}\label{eq:rv:radeq}
v_t=v_{rr}+\frac3r v_r+F(r,v), \qquad \forall r\in(0,1).
\end{equation}
 Consequently, the four-dimensional function $V$ satisfies
\begin{equation}\label{eq:rv:eq}
V_t=\Delta_{\mathbb R^4}V+F(|x|,V).
\end{equation}

Let $C_{\mathrm{rad}}(\overline{B^4})$ denote the space of continuous radial functions on $\overline{B^4}$, and recall the one-corotational data space $X_{\mathrm{cor}}$ introduced in the Introduction.  The correspondence
\begin{equation*}
q(r)=r v(r) \quad\longleftrightarrow\quad V(x)=v(|x|)
\end{equation*}
is a bijection between $X_{\mathrm{cor}}$ and $C_{\mathrm{rad}}(\overline{B^4})$, and it preserves the outer boundary data. For profiles having the corresponding regularity, the calculation above gives an exact equivalence between the $q$-equation in the one-corotational regularity class and the four-dimensional equation \eqref{eq:rv:eq}.  In particular, regularity across $r=0$ is encoded by the regularity of the radial function $V$ at $x=0$.  

\subsection{Well-posedness and blow-up for mild solutions} \label{subsec:wp-mild}
We first define the mild solution class for functions $V$ (and therefore for $q$) used below.
\begin{definition}\label{def:V:milso}
Given radial initial data $V_0\in C_{\mathrm{rad}}(\overline{B^4})$ and a boundary value $g\in C^1([0,T])$ such that $V_0=g(0)$ on $\partial B^4$, set $U_0:=V_0-g(0)\in C_0(\overline{B^4})$.  For $U:=V-g$, equation \eqref{eq:rv:eq} becomes
\begin{equation}\label{eq:wp:lifeq}
\left\{
\begin{aligned}
U_t&=\Delta_{\mathbb R^4}U+F\bigl(|x|,U+g(t)\bigr)-g'(t), &&\forall x\in B^4,\\
U&=0,&&\forall x\in\partial B^4,\\
U(0)&=U_0.
\end{aligned}
\right.
\end{equation}

We say that $V=U+g$ is a radial mild solution of \eqref{eq:rv:eq} if $U\in C([0,T];C_0(\overline{B^4}))$ and
\begin{equation}\label{eq:wp:duh}
U(t)=S_{D,4}(t)U_0 +\int_0^tS_{D,4}(t-s) \left[F\bigl(|x|,U(s)+g(s)\bigr)-g'(s)\right]\dd s,
\end{equation}
where $S_{D,4}(t)$ is the Dirichlet heat semigroup on $B^4$. 

The radial profile $v(t,r)$ of $V$ then determines $q(t,r):=rv(t,r)$; we call $q$ the mild solution of the original one-corotational equation \eqref{eq:flow:radial} induced by $V$.
\end{definition}

We now record local well-posedness,  the continuation alternative, and higher regularity of the maximal mild solution.  Its proof is rather standard; we include it for readers' convenience. 
For the classical regularity theory for parabolic equations, see \cite{Lieberman-1996}.
\begin{proposition}
\label{prop:loc-wp}
Let $q_0\in X_{\mathrm{cor}}$, set $v_0(r):=q_0(r)/r$, and define $V_0(x):=v_0(|x|)$ on $\overline{B^4}$.  Let $g\in C^1_{\mathrm{loc}}([0,\infty))$ satisfy $g(0)=q_0(1)$.  Then the following statements hold.
\begin{enumerate}[label=\textup{(\roman*)},leftmargin=2.2em]
\item There exists a unique maximal radial mild solution $V\in C\bigl([0,T_{\max});C_{\mathrm{rad}}(\overline{B^4})\bigr)$ of \eqref{eq:rv:eq}, with boundary value $V(t,x)=g(t)$ for $x\in\partial B^4$ and initial value $V_0$.  Its radial profile $v(t,r)$ determines the unique induced mild solution $q(t,r)$ of the one-corotational equation.

\item  Concerning the maximal time, if $T_{\max}<\infty$, then
\begin{equation}\label{eq:wp:blowalt}
\limsup_{t\to T_{\max}} \|V(t)\|_{L^\infty(B^4)} =\limsup_{t\to T_{\max}} \left\|q(t,\cdot)/r\right\|_{L^\infty(0,1)} =+\infty.
\end{equation}
Equivalently, if a mild solution $V$ is defined on $[0,T_0)$ for some finite $T_0$ and satisfies
\begin{equation*}
\sup_{0\leq t<T_0}\|V(t)\|_{L^\infty(B^4)}<\infty,
\end{equation*}
then the unique mild solution extends beyond time $T_0$.

\item The mild solution $V$ has a bounded spatial gradient on every positive-time closed cylinder and is smooth in the interior.  More precisely, for every $0<\tau<S<T_{\max}$,
\begin{equation}\label{eq:wp:pos-gra}
\sup_{\tau\leq t\leq S} \left( \|\nabla V(t)\|_{L^\infty(B^4)} +\|q_r(t,\cdot)\|_{L^\infty(0,1)} \right)<+\infty.
\end{equation}
Moreover, for every compact set $K\Subset B^4$ and every $0<\tau<S<T_{\max}$, one has $V\in C^\infty\bigl([\tau,S]\times K\bigr)$.
Consequently, for every $\rho\in(0,1)$, one has $q\in C^\infty\bigl([\tau,S]\times[0,\rho]\bigr)$.
\end{enumerate}
\end{proposition}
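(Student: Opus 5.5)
The argument works entirely with the lifted problem \eqref{eq:wp:lifeq} for $U=V-g$ and the Duhamel formula \eqref{eq:wp:duh}; the three parts follow in order.

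\emph{Part (i): local existence and uniqueness.} On a short interval $[0,T]$ I define the map
\[
\Gamma(U)(t):=S_{D,4}(t)U_0+\int_0^tS_{D,4}(t-s)\bigl[F(|x|,U(s)+g(s))-g'(s)\bigr]\dd s
\]
on the closed ball $\{U\in C([0,T];C_0(\overline{B^4})):\ \|U-S_{D,4}(\cdot)U_0\|_{L^\infty}\le 1\}$. The needed facts are:
\begin{itemize}
\item $S_{D,4}$ is a $C_0$-semigroup of contractions on $C_0(\overline{B^4})$, so $S_{D,4}(\cdot)U_0$ is continuous in time.
\item $F(|x|,z)$ is continuous in $x$ and, by \eqref{eq:rv:react}, Lipschitz in $z$ with constant $2L^2$ on $\{|z|\le L\}$.
\item $g\in C^1$, so $g'$ is bounded on $[0,T]$.
\end{itemize}
Together these make $\Gamma$ a self-map and a contraction for $T$ small, with $T$ depending only on $\|U_0\|_\infty$ and $\|g\|_{C^1([0,1])}$. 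One point needs care: the integrand need not vanish on $\partial B^4$. The convolution integral is still in $C_0$ because $S_{D,4}(t-s)$ maps $L^\infty$ into $C_0$ for $t-s>0$, and the integral converges in norm.

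Radial symmetry is preserved by uniqueness together with rotation invariance. Uniqueness in the whole class follows from Gronwall applied to the difference of two Duhamel formulas. Gluing local solutions then gives the maximal solution, and $q=rv$ is defined through the bijection $X_{\mathrm{cor}}\leftrightarrow C_{\mathrm{rad}}(\overline{B^4})$.

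\emph{Part (ii): continuation alternative.} This follows because the existence time depends only on the sup norm. If $\sup_{t<T_0}\|V(t)\|_\infty<\infty$, restarting at $T_0-\varepsilon$ for small $\varepsilon$ gives a uniform existence time, which extends the solution past $T_0$. Since $\|V(t)\|_\infty=\|q(t)/r\|_{L^\infty}$, the two forms of \eqref{eq:wp:blowalt} agree.

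\emph{Part (iii): gradient bounds and regularity.} I expect this to be the main obstacle, because the boundary datum is only $C^1$ and interior smoothness has to be bootstrapped.
\begin{enumerate}
\item On $[\tau/2,S]$ the quantities $U$ and $H:=F(|x|,V)-g'$ are bounded. Restarting Duhamel at time $t-\tau/2$ and using the Dirichlet gradient estimate $\|\nabla S_{D,4}(t)\varphi\|_\infty\le Ct^{-1/2}\|\varphi\|_\infty$ on the smooth domain $B^4$ gives $\sup_{[\tau,S]}\|\nabla V\|_\infty<\infty$, since the singular factor $(t-s)^{-1/2}$ is integrable.
\item To pass to $q_r$, I use $q_r=v+rv_r$ with $|v_r|\le|\nabla V|$, which yields the bound on $\|q_r\|_{L^\infty(0,1)}$ in \eqref{eq:wp:pos-gra}.
\item For interior smoothness I localize with a cutoff supported in $K'\Subset B^4$, so that $g$ no longer enters. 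The nonlinearity $F(|x|,V)$ is smooth in $(x,V)$, so interior $L^p$ and Schauder estimates bootstrap $V$ to $C^\infty([\tau,S]\times K)$.
\item Finally, $q=rv$ is smooth on $[0,\rho]$ because $V$ is smooth and radial near the origin. Hence $v$ is a smooth even function of $r$, and $rv$ is smooth.
\end{enumerate}
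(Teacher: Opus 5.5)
Your proposal is correct and follows essentially the same route as the paper. It runs a contraction argument for the Duhamel formula of $U=V-g$ in $C([0,T];C_0(\overline{B^4}))$, obtains the positive-time gradient bound from a restarted Duhamel formula with the $t^{-1/2}$ Dirichlet gradient estimate, and finishes with the standard interior bootstrap using the smoothness of $(x,z)\mapsto F(|x|,z)$. The only difference is in (ii): you use a local existence time that depends only on $\|U_0\|_{L^\infty}$ and local $C^1$ bounds on $g$, whereas the paper shows directly that $U(t)$ is Cauchy in $C_0(\overline{B^4})$ as $t\to T_{\max}$ and restarts from the limit.
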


With higher regularity on initial data or control, the regularity up to the initial time and the boundary improves as follows.
\begin{proposition}[Regularity improvements]\label{prop:wp-reg}
Let $q_0$, $g$, $V$, and $q$ be as in Proposition~\ref{prop:loc-wp}.  Then the following statements hold.
\begin{enumerate}[label=\textup{(\alph*)},leftmargin=2.2em]
\item If, in addition,  $V_0\in C^1(\overline{B^4})$, then, for every $S<T_{\max}$,
\begin{equation*}
\sup_{0\leq t\leq S} \left(\|\nabla V(t)\|_{L^\infty(B^4)}+  \|q_r(t,\cdot)\|_{L^\infty(0,1)} \right)<+\infty.
\end{equation*}
Note that only the \emph{zeroth-order corner compatibility} $g(0)=q_0(1)$ is assumed here.
\item Let $\alpha\in(0,1)$ and let $k\geq1$ be an integer.  If $g\in C_{\mathrm{loc}}^{k+\alpha/2}((0,+\infty))$,
then, for every $0<\tau<S<T_{\max}$ one has
\begin{equation*}
V\in C^{k+\alpha/2,\,2k+\alpha}
\bigl([\tau,S]\times\overline{B^4}\bigr) \textrm{ \; and \; }
q\in C^{k+\alpha/2,\,2k+\alpha}
\bigl([\tau,S]\times[0,1]\bigr).
\end{equation*}
In particular, the case $k=1$ shows that the mild solution is classical for every positive time before $T_{\max}$.

\item Under the assumptions of \textup{(b)}, suppose in addition that
\begin{equation*}
V_0\in C^{2k+\alpha}(\overline{B^4}) \textrm{ \; and \; } g\in C_{\mathrm{loc}}^{k+\alpha/2}([0,+\infty)),
\end{equation*}
and that the corresponding compatibility conditions hold at $t=0$.  Then the conclusion in \textup{(b)} also holds with $\tau=0$.
\end{enumerate}
\end{proposition}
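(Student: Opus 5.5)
Proposition~\ref{prop:wp-reg} follows from the same Duhamel formulation \eqref{eq:wp:duh} for $U=V-g$ on $B^4$, combined with the standard Schauder theory for the Dirichlet heat equation \cite{Lieberman-1996}. The point of the lift is that $F(|x|,z)$ extends smoothly across $x=0$ and, by \eqref{eq:rv:react}, is locally Lipschitz in $z$. Hence on any $[0,S]$ with $S<T_{\max}$ the forcing $H:=F(|x|,U+g)-g'$ is bounded, since $V$ is bounded there by Proposition~\ref{prop:loc-wp}(ii). Throughout, I pass from $V$ to $q=rv$ via $q_r=v+rv_r$ and $|v_r|\leq|\nabla V|$, so every bound on $V$ and $\nabla V$ transfers to $q$ on $[0,1]$.

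\emph{Part (a).} I would split \eqref{eq:wp:duh} into the free part $S_{D,4}(t)U_0$ and the Duhamel integral. Since $V_0\in C^1(\overline{B^4})$ and $U_0=V_0-g(0)$ vanishes on $\partial B^4$, we have $U_0\in C^1(\overline{B^4})\cap C_0$. Gradient bounds for the Dirichlet semigroup on such data, obtained via a barrier for the boundary derivative and the maximum principle for the tangential and interior derivatives, give $\sup_{t\leq S}\|\nabla S_{D,4}(t)U_0\|_{L^\infty}\leq C\|U_0\|_{C^1}$. For the Duhamel integral I would use the smoothing estimate $\|\nabla S_{D,4}(t)f\|_{L^\infty}\leq Ct^{-1/2}\|f\|_{L^\infty}$ on $(0,S]$, together with integrability of $(t-s)^{-1/2}$ and boundedness of $H$. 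No compatibility beyond $g(0)=q_0(1)$ enters, because $g'$ appears only in the forcing.

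\emph{Part (b).} Restarting at $\tau/2$, the solution has a bounded gradient by \eqref{eq:wp:pos-gra}. Thus $H$ is Lipschitz in $x$, and, after a first parabolic $C^{\alpha/2,\alpha}$ bound on $U$, Hölder in $t$ on $[\tau/2,S]$, provided $g'\in C^{\alpha/2}$, which holds because $g\in C^{1+\alpha/2}$. To avoid the initial-time corner I would localize in time with a cutoff $\theta(t)$ vanishing near $\tau/2$ and equal to $1$ on $[\tau,S]$. The product $\theta U$ solves a Dirichlet heat equation with zero data and forcing $\theta H+\theta' U\in C^{\alpha/2,\alpha}$, so the Schauder estimate yields $\theta U\in C^{1+\alpha/2,2+\alpha}$. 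I would then bootstrap in $k$. If $U\in C^{j+\alpha/2,2j+\alpha}$ and $g\in C^{k+\alpha/2}$ with $j<k$, then $H\in C^{j+\alpha/2,2j+\alpha}$, using smoothness of $F$ and $g'\in C^{k-1+\alpha/2}$. Higher Schauder estimates with nested cutoffs, where zero data makes all compatibility conditions trivial, give $U\in C^{j+1+\alpha/2,2j+2+\alpha}$. Adding back $g$, which is constant in $x$, gives the claim for $V$. For $q=|x|V$ restricted to a ray, the regularity transfers directly.

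\emph{Part (c).} I would drop the time cutoff and apply the global Schauder theory up to $t=0$. This requires the compatibility conditions of order up to $k$ at the corner $\{0\}\times\partial B^4$, which are exactly the assumed ones, and the same bootstrap then closes.

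The main obstacle is the time-regularity step in (b): $H$ contains $g'$ and $F(\cdot,U+g)$, so it is only Hölder in time once $U$ itself has been shown to be Hölder in $t$. I would first establish $U\in C^{\alpha/2,\alpha}$ from the mild formula together with the gradient bound, and only then invoke Schauder; the cutoff argument is what removes any dependence on compatibility at $t=0$.
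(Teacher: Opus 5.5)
Your proposal is correct and follows essentially the same route as the paper: part (a) combines the $C^1$-data gradient bound for $S_{D,4}$ with the $t^{-1/2}$ smoothing estimate in the Duhamel formula, and parts (b) and (c) use a time cutoff to remove the initial corner followed by a Schauder bootstrap that uses $g'\in C^{k-1+\alpha/2}$. The only real difference is the first Hölder step in (b): the paper uses $W^{2,1}_p$ estimates with an $L^p$ source and parabolic Sobolev embedding, whereas you use the positive-time gradient bound \eqref{eq:wp:pos-gra} together with the mild formula, getting time-Hölder exponent $1/2$ from $\|(S_{D,4}(h)-I)f\|_{L^\infty}\lesssim h^{1/2}\|\nabla f\|_{L^\infty}$ for Lipschitz $f$ vanishing on $\partial B^4$, and both versions work.
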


\begin{proof}[Proof of Propositions~\ref{prop:loc-wp} and \ref{prop:wp-reg}]
\noindent\textit{Step 1. Local mild well-posedness.} Set $U:=V-g$.  The compatibility $g(0)=q_0(1)$ gives $U_0\in C_0(\overline{B^4})$, while \eqref{eq:rv:react} shows that the source in \eqref{eq:wp:lifeq} is locally Lipschitz from bounded subsets of $C_0(\overline{B^4})$ into $C(\overline{B^4})$, uniformly on compact time intervals.  The semigroup $S_{D,4}(t)$ is an $L^\infty$ contraction and maps $C(\overline{B^4})$ into $C_0(\overline{B^4})$ for $t\in(0,+\infty)$.  Hence the Duhamel map in \eqref{eq:wp:duh} is well defined on $C([0,T];C_0(\overline{B^4}))$.  Its Duhamel term is $O(t)$ at the initial time, while continuity at positive times follows by splitting off a short terminal interval.  The same estimates give a contraction for sufficiently small $T$.  This proves local existence and uniqueness.

\smallskip
\noindent\textit{Step 2. Continuation and $T_{\max}$.} Suppose that $T_{\max}<\infty$ and that $V$ remains bounded.  Fix $t_1\in(0,T_{\max})$.
\begin{equation}\label{eq:wp:mild}
U(t)=S_{D,4}(t-t_1)U(t_1) +\int_{t_1}^tS_{D,4}(t-s) \left[F\bigl(|x|,U(s)+g(s)\bigr)-g'(s)\right]\dd s.
\end{equation}
Write the integrand as $G(s)$.  It is uniformly bounded in $C(\overline{B^4})$.  The part with $s\leq T_{\max}-\eta$ has a positive semigroup lag, whereas the contraction estimate gives
\begin{equation*}
\left\|\int_{T_{\max}-\eta}^tS_{D,4}(t-s)G(s)\dd s\right\|_{L^\infty} \leq \eta\sup_{t_1\leq s<T_{\max}}\|G(s)\|_{L^\infty}.
\end{equation*}
Letting first $t\to T_{\max}$ and then $\eta\to0^+$ shows that $U(t)$ is Cauchy in $C_0(\overline{B^4})$.  Restarting the local mild theory from its limit contradicts maximality.  This proves the continuation alternative.  Since $\|V(t)\|_{L^\infty(B^4)} =\|q(t,\cdot)/r\|_{L^\infty(0,1)}$, it also proves \eqref{eq:wp:blowalt}.
\smallskip

\smallskip
\noindent\textit{Step 3. Positive-time gradient and interior regularity.}
Fix $0<\tau<S<T_{\max}$ and set $t_0:=\tau/2$.  Restarting the mild formula at $t_0$ and using the Dirichlet heat-semigroup estimate
\begin{equation*}
\|\nabla S_{D,4}(t)f\|_{L^\infty(B^4)}
\leq Ct^{-1/2}\|f\|_{L^\infty(B^4)},
\end{equation*}
we obtain
\begin{equation*}
\sup_{\tau\leq t\leq S}\|\nabla V(t)\|_{L^\infty(B^4)}
\leq C_{\tau,S} \left(\|U(t_0)\|_{L^\infty(B^4)} +\|F(|x|,V)-g'\|_{L^\infty((t_0,S)\times B^4)} \right)<+\infty.
\end{equation*}
Since $V$ is bounded on $[t_0,S]$, radiality and $q_r=v+rv_r$ give
\begin{equation*}
\|q_r(t,\cdot)\|_{L^\infty(0,1)}\leq\|V(t)\|_{L^\infty(B^4)}+\|\nabla V(t)\|_{L^\infty(B^4)}.
\end{equation*}
This proves \eqref{eq:wp:pos-gra}.  The interior conclusion follows from the standard parabolic bootstrap, since $(x,z)\mapsto F(|x|,z)$ is smooth.  This proves \textup{(iii)}.

\smallskip
\noindent\textit{Step 4. Spatial gradient up to the initial time.}
Assume that $V_0\in C^1(\overline{B^4})$ and fix $S\in(0,T_{\max})$.  Then $U_0=V_0-g(0)$ belongs to $C^1(\overline{B^4})$ and has zero boundary trace.  The Dirichlet heat semigroup satisfies
\begin{equation*}
\sup_{0\leq t\leq S}
\|\nabla S_{D,4}(t)U_0\|_{L^\infty(B^4)}
\leq C_S\|U_0\|_{C^1(\overline{B^4})}.
\end{equation*}
Set
\begin{equation*}
G(t,x):=F\bigl(|x|,U(t,x)+g(t)\bigr)-g'(t).
\end{equation*}
The continuity of $V$ and the standing assumption $g\in C^1_{\mathrm{loc}}([0,+\infty))$ imply that $G$ is bounded on $[0,S]\times\overline{B^4}$.  Combining the preceding estimate, the semigroup gradient estimate used in Step~3, and the Duhamel formula, we obtain
\begin{equation*}
\|\nabla V(t)\|_{L^\infty(B^4)}
\leq
C_S\|U_0\|_{C^1(\overline{B^4})}
+C\int_0^t(t-s)^{-1/2}\|G(s)\|_{L^\infty(B^4)}\dd s.
\end{equation*}
Since $(t-s)^{-1/2}$ is integrable, this gives
\begin{equation*}
\sup_{0\leq t\leq S}
\|\nabla V(t)\|_{L^\infty(B^4)}<+\infty.
\end{equation*}
Finally, radiality and $q_r=v+rv_r$ yield
\begin{equation*}
\|q_r(t,\cdot)\|_{L^\infty(0,1)}
\leq
\|V(t)\|_{L^\infty(B^4)} +\|\nabla V(t)\|_{L^\infty(B^4)}.
\end{equation*}
This proves \textup{(a)}.

\smallskip
\noindent\textit{Step 5. Positive-time parabolic regularity.}
Fix $\alpha\in(0,1)$, an integer $k\geq1$, and $0<\tau<S<T_{\max}$.  Choose $0<\tau_0<\tau_1<\tau<S<S_1<T_{\max}$.  The solution is bounded on $[\tau_0,S_1]\times\overline{B^4}$.  Since $g,g'$ are also bounded on $[\tau_0,S_1]$, the source
\begin{equation*}
H(t,x):=F\bigl(|x|,U(t,x)+g(t)\bigr)-g'(t)
\end{equation*}
in \eqref{eq:wp:lifeq} belongs to $L^p((\tau_0,S_1)\times B^4)$ for every finite $p$.

Let $\chi\in C^\infty([\tau_0,S_1])$ vanish near $\tau_0$ and satisfy $\chi=1$ on $[\tau_1,S_1]$.  Then $\widetilde U:=\chi U$ has zero initial and boundary values and satisfies
\begin{equation*}
\widetilde U_t-\Delta_{\mathbb R^4}\widetilde U =\chi H+\chi' U\in L^p((\tau_0,S_1)\times B^4).
\end{equation*}
The interior and boundary $W^{2,1}_p$ estimates, followed by the parabolic Sobolev embedding, therefore give $V\in C^{\alpha/2,\alpha}
\bigl([\tau_1,S]\times\overline{B^4}\bigr)$, after choosing $p$ sufficiently large.  Since $(x,z)\mapsto F(|x|,z)$ is smooth by \eqref{eq:rv:react}, we obtain $F(|x|,V)\in C^{\alpha/2,\alpha}
\bigl([\tau_1,S]\times\overline{B^4}\bigr)$.

Using $g\in C_{\mathrm{loc}}^{k+\alpha/2}((0,+\infty))$, the interior and boundary Schauder estimates for \eqref{eq:wp:lifeq}, followed by the usual semilinear bootstrap, now yield
$V\in C^{k+\alpha/2,\,2k+\alpha}
\bigl([\tau,S]\times\overline{B^4}\bigr)$.
Since $V$ is radial, its radial profile has the corresponding regularity on $[0,1]$.  Hence
$q\in C^{k+\alpha/2,\,2k+\alpha}
\bigl([\tau,S]\times[0,1]\bigr)$.
In particular, the case $k=1$ gives the positive-time classical regularity.  This proves \textup{(b)}.

\smallskip
\noindent\textit{Step 6. Regularity at the initial corner.}
Under the assumptions of \textup{(c)}, the initial datum, boundary value, and nonlinear source have the regularity required by the parabolic boundary Schauder theorem, and the compatibility conditions remove the initial boundary corner singularities.  Applying the same Schauder bootstrap on $[0,S]\times\overline{B^4}$ gives
\begin{equation*}
V\in C^{k+\alpha/2,\,2k+\alpha}
\bigl([0,S]\times\overline{B^4}\bigr)
 \textrm{ \;  and \; }
q\in C^{k+\alpha/2,\,2k+\alpha}
\bigl([0,S]\times[0,1]\bigr),
\end{equation*}
for every $S<T_{\max}$.  This proves \textup{(c)} and finishes the proof.
\end{proof}

\begin{remark}[The finite maximal time $T_{\max}$]\label{rem:wp:finmax}
Under the assumptions of Proposition~\ref{prop:loc-wp}, suppose that $T_{\max}<+\infty$.  Then $T_{\max}$ simultaneously shares the following three properties.
\begin{itemize}[leftmargin=2.4em]
\item The solution satisfies
\begin{equation*}
V\in C\bigl([0,T_{\max});C_{\mathrm{rad}}(\overline{B^4})\bigr)  \textrm{ \; and \; }
q/r\in C\bigl([0,T_{\max});C([0,1])\bigr),
\end{equation*}
but neither trajectory admits a continuous extension to $t=T_{\max}$ in the indicated space.

\item The mild-solution norm is bounded on every compact subinterval of $[0,T_{\max})$, but
\begin{equation*}
\limsup_{t\to T_{\max}}
\|V(t)\|_{L^\infty(B^4)}
=
\limsup_{t\to T_{\max}}
\left\|q(t,\cdot)/r\right\|_{L^\infty(0,1)}
=+\infty.
\end{equation*}

\item The radial gradient is bounded on every compact positive-time interval before $T_{\max}$, but
\begin{equation*}
\limsup_{t\to T_{\max}}
\|q_r(t,\cdot)\|_{L^\infty(0,1)}
=+\infty.
\end{equation*}
\end{itemize}
The continuity in the first item follows from Proposition~\ref{prop:loc-wp}\textup{(i)}, while the second item is Proposition~\ref{prop:loc-wp}\textup{(ii)} and also excludes the continuous extensions in the first item.  Finally, Proposition~\ref{prop:loc-wp}\textup{(iii)} gives the preterminal gradient bound, and $q(t,0)=0$ gives
\begin{equation*}
\left\|q(t,\cdot)/r\right\|_{L^\infty(0,1)}
\leq
\|q_r(t,\cdot)\|_{L^\infty(0,1)}.
\end{equation*}
This proves the third item. Finally, we simply recall for the corresponding one-corotational map $u$, one also has
\begin{equation*}
\|q_r(t,\cdot)\|_{L^\infty(0,1)}
\leq\|\nabla u(t)\|_{L^\infty(\D)}
\leq\sqrt2\,\|q_r(t,\cdot)\|_{L^\infty(0,1)}.
\end{equation*}
\end{remark}

\begin{definition}[Blow-up for mild solutions] \label{def:wp:blowup}
Let $q_0\in X_{\mathrm{cor}}$, and let $g\in C^1_{\mathrm{loc}}([0,\infty))$ satisfy $g(0)=q_0(1)$. 
Let $V$ be the maximal mild solution in Proposition~\ref{prop:loc-wp}, and let $q(t,r)=rv(t,r)$ be the induced one-corotational mild solution.
\begin{enumerate}[label=\textup{(\roman*)},leftmargin=2.4em]
\item We say that $q$ \emph{blows up in finite time} if $T_{\max}<+\infty$.

\item We say that $q$ develops \emph{infinite-time concentration} if $T_{\max}=+\infty$ and
\begin{equation*}
\limsup_{t\to+\infty}
\|q_r(t,\cdot)\|_{L^\infty(0,1)}=+\infty.
\end{equation*}
\end{enumerate}
\end{definition}

\begin{remark}[Relation between mild and classical solutions]\label{rem:wp:cla-blow}
The mild formulation accommodates lower regularity and the initial boundary corner.  We say that a classical solution attains $q_0$ in $X_{\mathrm{cor}}$ if
\begin{equation*}
\left\|q(t,\cdot)/r-q_0/r\right\|_{L^\infty(0,1)} \longrightarrow 0
\text{ \; as }t\to 0^+.
\end{equation*}
Any such classical solution is also a mild solution.  Indeed, its four-dimensional lift belongs locally in positive time to the energy class.  Since $\{0\}$ has zero $H^1$-capacity in $B^4$, the equation extends weakly across the origin. 
The weak--semigroup equivalence gives the restarted Duhamel formula on every $[\tau,S]$.  Letting $\tau\to0^+$ and using the prescribed convergence to $q_0$ in $X_{\mathrm{cor}}$, we obtain \eqref{eq:wp:duh}.  Hence, uniqueness identifies the classical solution with the maximal mild solution on their common lifespan.

Conversely, if $g\in C_{\mathrm{loc}}^{1+\alpha/2}((0,+\infty))$ for some $\alpha\in(0,1)$, Proposition~\ref{prop:wp-reg}\textup{(b)} shows that the maximal mild solution is classical for every positive time.  Thus, the maximal mild and classical lifespans agree.  Remark~\ref{rem:wp:finmax} then shows that finite-time blow-up is characterized by
\begin{equation*}
\limsup_{t\to T_{\max}} \|q_r(t,\cdot)\|_{L^\infty(0,1)}= +\infty.
\end{equation*}
Consequently,  Definition~\ref{def:wp:blowup} concerning finite-time blow-up of the maximal mild solution agrees with the usual notion of gradient blow-up for classical solutions.
\end{remark}

\begin{remark}
This terminology concerns the breakdown of the maximal mild solution and, when applicable, of the corresponding maximal classical solution.  It does not exclude a weak continuation beyond a finite singular time; see Struwe \cite{Struwe-1985} for his global weak theory on surfaces. 

Moreover, the variational class introduced in the next section is used only for comparison and is not such a post-singularity continuation class.
\end{remark}

\subsection{Weak comparison and fixed-interface gluing} \label{subsec:weak-comp}

The preceding subsection develops the mild-solution theory for the controlled flow.  The purpose of this subsection is to compare these solutions with explicit barriers that may be only piecewise classical and may carry derivative jumps at fixed interfaces. 
We now introduce a variational framework in order to compare these solutions.  This framework is used only for comparison and is not intended as a continuation theory beyond finite-time singularities.

For convenience in the applications, we formulate this framework in terms of the radial profile $v(t,r)$.  Under the identification $V(t,x)=v(t,|x|)$, it is equivalent to the standard weak formulation for radial functions on $B^4$.  The natural radial measure inherited from $B^4$ is $r^3\dd r$.  We therefore define
\begin{equation*}
\mathcal H:=L^2\bigl((0,1),r^3\dd r\bigr),\quad \mathcal V:=\{z\in\mathcal H:z_r\in\mathcal H\}, \textrm{ \; and \;} \mathcal V_0:=\{z\in\mathcal V:z(1)=0\}.
\end{equation*}
Although this weight vanishes at the origin, it is only the Jacobian of four-dimensional polar coordinates. The resulting formulation is the radial representation of the uniformly parabolic equation on $B^4$ and does not rely on one-dimensional uniform ellipticity.
Here the ordinary one-dimensional $H^1$ regularity gives the trace in the definition of $\mathcal V_0$.  The same argument on a neighborhood of each fixed $R\in(0,1)$ makes evaluation at $R$ a bounded linear functional on $\mathcal V$.  For $S>0$, set
\begin{equation}\label{eq:wc:energy}
\mathcal W_S :=C([0,S];\mathcal H) \cap L^2(0,S;\mathcal V) \cap L^\infty((0,S)\times(0,1)).
\end{equation}

\begin{lemma}[The weighted energy class] \label{lem:mild-energy}
Let $V$ be the maximal radial mild solution from Proposition~\ref{prop:loc-wp}, and let $v$ be its one-dimensional radial profile.  For every $S<T_{\max}$, one has
\begin{equation}\label{eq:me:class}
v\in\mathcal W_S \quad\text{and}\quad v_t\in L^2(0,S;\mathcal V_0').
\end{equation}
Moreover, for every $\phi\in L^2(0,S;\mathcal V_0)$,  $v$ satisfies the following variational form of \eqref{eq:rv:radeq}:
\begin{equation}\label{eq:me:var}
\int_0^S\langle v_t,\phi\rangle_{\mathcal V_0',\mathcal V_0}\dd t +\int_0^S\int_0^1v_r\phi_r r^3\dd r\dd t -\int_0^S\int_0^1F(r,v)\phi r^3\dd r\dd t=0.
\end{equation}
\end{lemma}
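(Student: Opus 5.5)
The plan is to work with $U:=V-g$, which solves the Dirichlet problem \eqref{eq:wp:lifeq} with homogeneous boundary trace. The regularity and the variational identity will be transferred back to $v=U+g$ at the end; since $g(t)$ does not depend on $x$, adding it changes neither $\nabla V$ nor the weak form except for the term $g'$. Fix $S<T_{\max}$. By Proposition~\ref{prop:loc-wp}(i), $V$ is continuous on $[0,S]\times\overline{B^4}$, so $V$ is bounded there. Since $g\in C^1([0,S])$, the source
\[
G(t,x):=F\bigl(|x|,U+g\bigr)-g'(t)
\]
is bounded on $[0,S]\times B^4$, with the bound coming from \eqref{eq:rv:react}. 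This already gives $v\in L^\infty((0,S)\times(0,1))$. It also gives $v\in C([0,S];\mathcal H)$, because uniform continuity in time in $C(\overline{B^4})$ implies continuity in $L^2(B^4)$, and the $L^2(B^4)$ norm of a radial function is, up to the constant $|\mathbb S^3|$, the norm of its profile in $\mathcal H$.

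The main obstacle is the gradient, that is, $v\in L^2(0,S;\mathcal V)$. The datum $U_0$ is only in $C_0(\overline{B^4})$, so the pointwise estimate \eqref{eq:wp:pos-gra} degenerates as $t\to0$. I will therefore use $L^2$ energy theory for the linear Dirichlet heat equation $U_t-\Delta U=G$ with $U_0\in L^2(B^4)$ and $G\in L^2(0,S;L^2(B^4))\subset L^2(0,S;H^{-1})$. The standard Lions theory gives a unique solution
\[
\widetilde U\in L^2(0,S;H^1_0(B^4)),\qquad \widetilde U_t\in L^2(0,S;H^{-1}(B^4)),
\]
together with the energy estimate
\[
\sup_t\|\widetilde U\|_{L^2}^2+\int_0^S\|\nabla\widetilde U\|_{L^2}^2\dd t\le \|U_0\|_{L^2}^2+C\|G\|_{L^2L^2}^2 .
\]
I then have to show $\widetilde U=U$, and I will do this by the weak--semigroup equivalence. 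The Lions solution admits the Duhamel representation in $L^2$, which can be checked on the eigenfunction expansion of the Dirichlet Laplacian; the mild formula \eqref{eq:wp:duh} with the same $U_0$ and $G$ therefore yields the same function. Because $U$ is radial, $\nabla U$ corresponds to $v_r$ and the $H^1_0$ condition corresponds to zero trace at $r=1$. This gives $v_r\in L^2(0,S;\mathcal H)$, hence $v\in\mathcal W_S$.

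Next I treat the time derivative. Since $v_t=U_t+g'$, with $U_t\in L^2(0,S;H^{-1}_{\mathrm{rad}})$ and $g'$ bounded, and since the radial part of $H^{-1}$ embeds into $\mathcal V_0'$ (every $\phi\in\mathcal V_0$ lifts to a radial $H^1_0$ function), we get $v_t\in L^2(0,S;\mathcal V_0')$. The weak form of the linear problem reads
\[
\langle U_t,\Phi\rangle+\int\nabla U\cdot\nabla\Phi=\int G\,\Phi
\]
for every $\Phi\in L^2(0,S;H^1_0)$. I take $\Phi(t,x)=\phi(t,|x|)$ and pass to polar coordinates, where the constant $|\mathbb S^3|$ cancels. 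Writing $G=F(r,v)-g'$ and $\langle U_t,\phi\rangle=\langle v_t,\phi\rangle-\int g'\phi\,r^3\dd r$, the two $g'$ terms cancel, and this produces exactly \eqref{eq:me:var}.

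One point needs some care: the origin. This is harmless in the lifted formulation, since $V$ is a genuine function on $B^4$ and the only effect of radial symmetry is the Jacobian $r^3$. So no capacity argument is needed here, in contrast with Remark~\ref{rem:wp:cla-blow}.
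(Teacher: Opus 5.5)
Your proposal is correct and follows the paper's proof. Both subtract the spatially constant trace $g$, note that the source is bounded, apply the Lions energy theory for the Dirichlet heat equation on $B^4$, identify the energy solution with the mild solution by uniqueness, and pass to radial coordinates; yours supplies more detail. One small caveat on your closing remark: $V$ itself needs no capacity argument, but lifting test functions $\phi\in\mathcal V_0$ to $H^1_0(B^4)$ does use removability of the origin (the Hardy/cutoff argument in the proof of Lemma~\ref{lem:fix-int}), a standard fact the paper also takes for granted at this point.
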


\begin{proof}
Set $U:=V-g$ as in \eqref{eq:wp:lifeq}.  Since $U(0)\in C_0(\overline{B^4})\subset L^2(B^4)$ and $V$ is bounded on $[0,S]$, the source term in the lifted equation belongs to $L^2((0,S)\times B^4)$.  The standard energy estimate for the Dirichlet heat equation gives
\begin{equation*}
U\in C([0,S];L^2(B^4)) \cap L^2(0,S;H_0^1(B^4)), \qquad U_t\in L^2(0,S;H^{-1}(B^4)).
\end{equation*}
The semigroup solution coincides with this energy solution by uniqueness. Adding back the spatially constant lift $g(t)$ gives the corresponding energy regularity for $V$.  Passing to radial coordinates then yields \eqref{eq:me:class} and the asserted variational identity for the profile $v$.
\end{proof}

\begin{definition}[Weak subsolutions and supersolutions]
\label{def:weak-subsuper}
A function $v\in\mathcal W_S$ is called a weak subsolution of \eqref{eq:rv:radeq} if $v_t\in L^2(0,S;\mathcal V_0')$ and
\begin{equation}\label{eq:wc:subdef}
\int_0^S\langle v_t,\phi\rangle_{\mathcal V_0',\mathcal V_0}\dd t +\int_0^S\int_0^1v_r\phi_r r^3\dd r\dd t -\int_0^S\int_0^1F(r,v)\phi r^3\dd r\dd t \leq0
\end{equation}
for every nonnegative $\phi\in L^2(0,S;\mathcal V_0)$.  A weak supersolution is defined by reversing the inequality.  The time derivative and the test functions may equivalently be localized in time; this gives the same definition on every subinterval of $(0,S)$.

In particular, by Lemma~\ref{lem:mild-energy}, the radial profile of every maximal mild solution from Proposition~\ref{prop:loc-wp} may be used both as a weak subsolution and as a weak supersolution.
\end{definition}

The point $r=0$ is the center of $B^4$, not a spatial boundary.  The cutoff argument in the proof of Lemma~\ref{lem:fix-int} shows that radial functions in $\mathcal V_0$ that vanish near $r=0$ are dense in $\mathcal V_0$. This density property reflects the fact that $\{0\}$ has zero $H^1$-capacity in $B^4$. Consequently, neither a boundary trace nor a separate origin-flux condition is imposed at $r=0$.  At $r=1$, weak subsolutions and supersolutions have their usual Sobolev traces, which need not agree.

\begin{proposition}[Weak comparison]\label{prop:weak-comp}
Let $\underline v,\overline v\in\mathcal W_S$ be respectively a weak subsolution and a weak supersolution in the sense of Definition~\ref{def:weak-subsuper}.  Suppose that
\begin{equation}\label{eq:wc:data}
\underline v(0,\cdot)\leq\overline v(0,\cdot) \quad\text{for a.e. }r\in(0,1), \qquad \underline v(t,1)\leq\overline v(t,1) \quad\text{for a.e. }t\in(0,S).
\end{equation}
Then, one has the following comparison,
\begin{equation}\label{eq:wc:con}
\underline v(t,r)\leq\overline v(t,r) \quad\text{for a.e. }(t,r)\in(0,S)\times(0,1).
\end{equation}
If both functions are continuous, the conclusion holds pointwise for every $(t,r)\in[0,S]\times[0,1]$.
\end{proposition}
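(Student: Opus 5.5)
We will prove the weak comparison by the standard Stampacchia truncation argument applied to the difference $w:=\underline v-\overline v$, testing with its positive part. Since $\underline v(t,1)\le\overline v(t,1)$ for a.e.\ $t$, the function $\phi:=w_+$ has zero trace at $r=1$, so $\phi\in L^2(0,S;\mathcal V_0)$ and $\phi\ge0$; positive parts preserve $\mathcal V$ because $(w_+)_r=w_r\mathbf 1_{\{w>0\}}$ (this is just the radial form of the usual $H^1(B^4)$ statement). Subtracting the supersolution inequality for $\overline v$ from the subsolution inequality for $\underline v$, both tested with $\phi\mathbf 1_{(0,t)}$ (time localization is allowed by Definition~\ref{def:weak-subsuper}), gives
\begin{equation*}
\int_0^t\langle w_t,w_+\rangle\dd s+\int_0^t\int_0^1|(w_+)_r|^2r^3\dd r\dd s\le\int_0^t\int_0^1\bigl(F(r,\underline v)-F(r,\overline v)\bigr)w_+\,r^3\dd r\dd s .
\end{equation*}

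The first key step is the chain rule $\int_0^t\langle w_t,w_+\rangle\dd s=\tfrac12\|w_+(t)\|_{\mathcal H}^2-\tfrac12\|w_+(0)\|_{\mathcal H}^2$, with the last term vanishing by the initial ordering. This is the Lions--Magenes type identity for $w\in L^2(0,S;\mathcal V)$ with $w_t\in L^2(0,S;\mathcal V_0')$; the only subtlety is that $w$ itself need not lie in $\mathcal V_0$ (nonzero trace at $r=1$). I would handle this by writing $w=w_+-w_-$ and noting that the pairing against $w_+\in\mathcal V_0$ is what is needed, then proving the identity by mollification in time (Steklov averages), using that $w\in C([0,S];\mathcal H)$ to pass to the limit; alternatively, lift to $B^4$ and invoke the standard result for $H^1$ functions whose positive part is in $H^1_0$. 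I expect this step to be the main technical obstacle, though it is classical.

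The second step handles the nonlinearity: both functions lie in $L^\infty((0,S)\times(0,1))$, say bounded by $L$, and by \eqref{eq:rv:react} $0\le\partial_zF(r,z)\le2z^2\le2L^2$, so $F(r,\underline v)-F(r,\overline v)\le 2L^2 w$ on $\{w>0\}$, whence the right-hand side is at most $2L^2\int_0^t\|w_+(s)\|_{\mathcal H}^2\dd s$. Dropping the nonnegative gradient term, Gronwall gives $\|w_+(t)\|_{\mathcal H}=0$ for all $t\in[0,S]$, i.e.\ \eqref{eq:wc:con} a.e., since the weight $r^3$ is positive on $(0,1)$. No condition at $r=0$ is needed because the integration by parts is built into the weak formulation on the full $B^4$, where the origin is an interior point of zero capacity.

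Finally, if both functions are continuous, the set $\{\underline v>\overline v\}$ is relatively open in $[0,S]\times[0,1]$; being of measure zero, it must be empty (every nonempty relatively open subset of the closed cylinder has positive measure), giving the pointwise conclusion.
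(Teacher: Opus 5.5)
Your proposal is correct and follows essentially the same route as the paper. Both test the subtracted inequalities with $(\underline v-\overline v)_+\in L^2(0,S;\mathcal V_0)$ and use Steklov averages to justify the energy identity despite the nonzero traces at $r=1$; both then use the bound $\partial_zF\le 2z^2\le 2M^2$ and Gronwall, and extend to the closed cylinder by continuity in the continuous case.
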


\begin{proof}
Set $d:=\underline v-\overline v$ and $z:=d_+$.  The boundary ordering in \eqref{eq:wc:data} gives $z\in L^2(0,S;\mathcal V_0)$. We first justify the positive-part test in the presence of nonhomogeneous boundary traces.  For $h\in(0,S)$, define the forward Steklov average
\begin{equation*}
d^h(t):=\frac{1}h\int_t^{t+h}d(s)\dd s, \quad \forall t\in(0,S-h).
\end{equation*}
Since $d(t,1)\leq0$ for a.e. $t$, one also has $d^h(t,1)\leq0$, and hence $(d^h)_+\in\mathcal V_0$.  Testing the time-averaged difference inequality by smooth Lipschitz approximations of $(d^h)_+$ and then letting $h\to0^+$ gives
\begin{equation}\label{eq:z:chai}
\int_0^t\langle d_t,z\rangle_{\mathcal V_0',\mathcal V_0}\dd s =\frac{1}{2}\|z(t)\|_{\mathcal H}^2-\frac{1}{2}\|z(0)\|_{\mathcal H}^2
\end{equation}
for a.e. $t\in(0,S)$.  In particular, no time regularity of the individual boundary traces beyond \eqref{eq:wc:data} is used.

Set $M:= \max\left\{
\|\underline v\|_{L^\infty((0,S)\times(0,1))},
\|\overline v\|_{L^\infty((0,S)\times(0,1))} \right\}$. 
Indeed, using the same nonnegative test function $\phi$ in the two variational inequalities and subtracting, we obtain
\begin{equation*}
\int_0^t\langle d_t,\phi\rangle_{\mathcal V_0',\mathcal V_0} \dd s +\int_0^t\int_0^1d_r\phi_r\,r^3\dd r\dd s
\leq \int_0^t\int_0^1 \bigl(F(r,\underline v)-F(r,\overline v)\bigr)\phi\,r^3\dd r\dd s.
\end{equation*}
The boundary ordering gives $z=d_+\in L^2(0,S;\mathcal V_0)$.  The preceding Steklov and truncation argument justifies taking $\phi=z$ and gives \eqref{eq:z:chai}.
Moreover, the Sobolev chain rule gives $d_rz_r=|z_r|^2$ almost everywhere.  We therefore obtain
\begin{equation}\label{eq:wc:ene}
\frac12\|z(t)\|_{\mathcal H}^2 +\int_0^t\|z_r(s)\|_{\mathcal H}^2\dd s
\leq \frac12\|z(0)\|_{\mathcal H}^2 +\int_0^t\int_0^1 \bigl(F(r,\underline v)-F(r,\overline v)\bigr)z\,r^3\dd r\dd s.
\end{equation}

On the set $\{z>0\}$, one has $z=\underline v-\overline v$, and, by \eqref{eq:rv:react},
\begin{align*}
F(r,\underline v)-F(r,\overline v)
=\int_{\overline v}^{\underline v}\partial_\zeta F(r,\zeta)\dd\zeta \leq 2M^2(\underline v-\overline v) =2M^2z.
\end{align*}
On the complementary set $\{z=0\}$, the product with $z$ vanishes.  Hence
\begin{equation*}
\bigl(F(r,\underline v)-F(r,\overline v)\bigr)z \leq2M^2z^2
\end{equation*}
almost everywhere.  Substituting this estimate into \eqref{eq:wc:ene}, we obtain
\begin{equation*}
\frac12\|z(t)\|_{\mathcal H}^2 \leq \frac12\|z(0)\|_{\mathcal H}^2 +2M^2\int_0^t\|z(s)\|_{\mathcal H}^2\dd s.
\end{equation*}

Since $z(0, \cdot)=0$ in $\mathcal{H}$, Gronwall's inequality applied to the preceding inequality gives $z(t)=0$ in $\mathcal{H}$ for almost every $t$. Since $z\in C([0,S];\mathcal H)$, the same conclusion holds for every $t\in[0,S]$. Hence $z(t, r)= 0$ for almost every $r\in (0, 1)$.  This proves \eqref{eq:wc:con}. If both functions are continuous, the almost-everywhere ordering extends to the closed cylinder by continuity.
\end{proof}

We finally record a criterion for converting piecewise differential inequalities and fixed-interface derivative jumps into weak subsolution and supersolution inequalities.  In the annular-barrier application, the inner piece extends smoothly across the center.  The origin-removability argument in the proof is included to cover energy-class profiles that are classical only for $r>0$, in particular the fractional CDY lower profile.

\begin{lemma}[Fixed-interface criterion]
\label{lem:fix-int}
Let $S>0$. Let $v\in\mathcal W_S\cap C([0,S]\times[0,1])$ and $ v_t\in L^2(0,S;\mathcal V_0')$.
Let $0<R_1<\cdots<R_N<1$ be a finite, possibly empty, set of fixed interfaces.  Suppose that $v$ is classical on each component of $(0,S)\times\bigl((0,1)\setminus\{R_1,\ldots,R_N\}\bigr)$.
Define the piecewise residual by
\begin{equation*}
G:=v_t-v_{rr}-\frac3r v_r-F(r,v),
\end{equation*}
and suppose that $G\in L^2(0,S;\mathcal H)$.
For each interface $R_j$, assume that the one-sided traces $v_r(t,R_j-)$ and $v_r(t,R_j+)$ exist for a.e. $t\in(0,S)$ and that both functions satisfy $R_j^3v_r(\cdot,R_j\pm)$ belong to $L^2(0,S)$. Recall the jump convention $[h]_{R}$  introduced in \eqref{eq:st:jumcon}. If
\begin{equation}\label{eq:fi:sign}
G\leq0 \quad\text{for a.e. }(t,r)\in(0,S)\times(0,1),
\quad [v_r]_{R_j}\geq0 \quad\text{for a.e. }t\in(0,S)
\end{equation}
for every $j$, then $v$ is a weak subsolution.  If both inequalities in \eqref{eq:fi:sign} are reversed, then $v$ is a weak supersolution.
\end{lemma}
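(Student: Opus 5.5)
We prove the subsolution case; the supersolution case follows by reversing every inequality. Fix a nonnegative test function $\phi\in L^2(0,S;\mathcal V_0)$. The goal is to show that the left-hand side of \eqref{eq:wc:subdef} equals
\begin{equation*}
\int_0^S\int_0^1 G\,\phi\, r^3\dd r\dd t-\sum_j\int_0^S R_j^3[v_r]_{R_j}(t)\,\phi(t,R_j)\dd t .
\end{equation*}
Under \eqref{eq:fi:sign} both terms are $\leq0$, which is exactly the weak subsolution inequality.

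\emph{Step 1: reduce to good test functions.} By density, it suffices to take $\phi$ smooth in $t$, vanishing near $r=0$, and with $\phi(t,1)=0$. To justify the density, use the cutoff $\eta_\varepsilon(r)$ that equals $0$ on $[0,\varepsilon^2]$, equals $\log(r/\varepsilon^2)/\log(1/\varepsilon)$ on $[\varepsilon^2,\varepsilon]$, and equals $1$ beyond $\varepsilon$. Its weighted Dirichlet energy satisfies
\begin{equation*}
\int|\eta_\varepsilon'|^2r^3\dd r\leq \frac{\varepsilon^2}{\log^2(1/\varepsilon)}\to0 ,
\end{equation*}
which is the zero-capacity of $\{0\}$ in $B^4$. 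Since $\phi\eta_\varepsilon\to\phi$ in $L^2(0,S;\mathcal V_0)$, it is enough to prove the identity when $\phi$ is supported in $r\geq\varepsilon$.

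All terms pass to the limit. The duality pairing and the bilinear and $F$-terms are continuous in $L^2(0,S;\mathcal V_0)$, because $v\in L^\infty$ and $F(r,v)$ is bounded. The jump terms pass to the limit because evaluation at the fixed points $R_j$ is a bounded functional on $\mathcal V$ and $R_j^3 v_r(\cdot,R_j\pm)\in L^2(0,S)$. The $G$-term passes to the limit because $G\in L^2(0,S;\mathcal H)$.

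\emph{Step 2: integrate by parts piecewise.} On each interval $(R_{j-1},R_j)$, with $R_0=\varepsilon$ and $R_{N+1}=1$, $v$ is classical. Write $v_{rr}+\frac3rv_r=r^{-3}(r^3v_r)_r$ and integrate $\int v_r\phi_r r^3\dd r$ by parts on each piece. This produces $-\int (r^3v_r)_r\phi\dd r$ plus boundary terms. The endpoint terms vanish at $r=1$ because $\phi(t,1)=0$, and at $r=\varepsilon$ by the support condition. At each interior interface the terms combine to $-R_j^3[v_r]_{R_j}\phi(t,R_j)$.

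Next, identify $\langle v_t,\phi\rangle$ with $\int v_t\phi r^3\dd r$. Away from the interfaces $v$ is classical. Because $v$ is continuous across $R_j$, its distributional time derivative carries no singular part on $\{r=R_j\}$. Since $v_t\in L^2(\mathcal V_0')$ and the piecewise classical $v_t=G+v_{rr}+\frac3rv_r+F$ is locally integrable, the pairing is given by the $L^2$ integral against test functions supported away from the interfaces. Density in $L^2(\mathcal V_0)$ then extends this to general $\phi$. Summing everything gives the displayed identity.

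\emph{Main obstacle.} The delicate point is this last identification, $\langle v_t,\phi\rangle=\int v_t\phi\,r^3$, near the interfaces and near $t=0$. Pointwise regularity of $v_t$ up to $R_j$ is not assumed. To handle it, I would first test against $\phi$ supported away from $\{R_j\}$, and then approximate a general $\phi$ by $\phi(1-\chi_\delta)$ with a shrinking cutoff around each $R_j$. Moving the time derivative onto the test function via $\int\langle v_t,\phi\rangle=-\int\!\!\int v\phi_t$ (for $\phi$ compactly supported in time, using time-localization in Definition~\ref{def:weak-subsuper}) avoids needing $v_t$ at the interface. The piecewise spatial integration then uses only the one-sided flux traces.
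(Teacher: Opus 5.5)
Your overall route is the paper's. First prove, for smooth nonnegative $\phi$ vanishing near $r=0$ and at $r=1$, that the left side of \eqref{eq:wc:subdef} equals $\int_0^S\!\int_0^1G\phi\,r^3\dd r\dd t-\sum_j\int_0^SR_j^3[v_r]_{R_j}\phi(t,R_j)\dd t$. Then remove the restriction at the origin by a cutoff, and pass to the limit using continuity of all terms on $L^2(0,S;\mathcal V_0)$. At the origin, a linear cutoff already suffices in $B^4$. What must tend to zero is $\int|\eta_\varepsilon'\phi|^2r^3\dd r$, not $\int|\eta_\varepsilon'|^2r^3\dd r$. For unbounded $\phi\in\mathcal V_0$ this needs the four-dimensional Hardy inequality $\int_0^1|\phi|^2r\dd r\leq C\int_0^1|\phi_r|^2r^3\dd r$ used in the paper, or a preliminary truncation.

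The step that fails is your justification of $\langle v_t,\phi\rangle=\int v_t\phi\,r^3\dd r$ by ``density in $L^2(\mathcal V_0)$'' of test functions supported away from the interfaces. Such functions are not dense. As you yourself use in Step~1, $\phi\mapsto\phi(\cdot,R_j)$ is continuous on $L^2(0,S;\mathcal V_0)$, so every limit of such functions satisfies $\phi(\cdot,R_j)=0$. Concretely, $\int|\chi_\delta'\phi|^2r^3\dd r\sim|\phi(R_j)|^2/\delta$, so $\phi(1-\chi_\delta)\not\to\phi$ in $\mathcal V_0$. This matters exactly for the test functions with $\phi(t,R_j)\neq0$, which are the ones that see the jump. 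Also, the hypotheses control only the combination $G$, not the piecewise $v_t$ and $v_{rr}$ separately up to $R_j$. So neither $\int v_t\phi\,r^3$ nor your piecewise spatial integration by parts up to the interface is a priori justified.

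The repair is to use your cutoff $\chi_\delta$ (equal to $1$ near each $R_j$) as a term-by-term limit rather than as an approximation in $\mathcal V_0$. For $\phi$ compactly supported in time, $\phi(1-\chi_\delta)$ avoids the interfaces, so the classical computation gives
\begin{equation*}
\int_0^S\langle v_t,\phi(1-\chi_\delta)\rangle\dd t+\int_0^S\!\!\int_0^1\bigl(v_r\phi_r-F(r,v)\phi\bigr)(1-\chi_\delta)r^3\dd r\dd t
=\int_0^S\!\!\int_0^1G\phi(1-\chi_\delta)r^3\dd r\dd t+\int_0^S\!\!\int_0^1v_r\phi\,\chi_\delta'\,r^3\dd r\dd t .
\end{equation*}
The time term equals $-\int_0^S\!\int_0^1v\phi_t(1-\chi_\delta)r^3\dd r\dd t$, so it converges to $\int_0^S\langle v_t,\phi\rangle\dd t$ by dominated convergence. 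All other terms except the last converge the same way. The last term does not vanish: it converges to $-\sum_j\int_0^SR_j^3[v_r]_{R_j}\phi(t,R_j)\dd t$ by the one-sided traces, understood in an $L^2(0,S)$ sense, which is what holds in the applications. So the jump term is generated by the cutoff itself. With this replacement your argument is complete, and more careful than the paper's, which simply asserts the piecewise integration by parts.
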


\begin{proof}
We first take a nonnegative smooth test function $\phi$ such that $\phi(t,1)=0$ and $\phi$ vanishes near $r=0$.  Integrating by parts on each regular piece and summing over the pieces, we obtain
\begin{align}
&\int_0^S\langle v_t,\phi\rangle_{\mathcal V_0',\mathcal V_0}\dd t +\int_0^S\int_0^1v_r\phi_r r^3\dd r\dd t -\int_0^S\int_0^1F(r,v)\phi r^3\dd r\dd t \notag\\
&\qquad =\int_0^S\int_0^1G\phi r^3\dd r\dd t -\sum_{j=1}^N\int_0^S[v_r]_{R_j}(t)R_j^3\phi(t,R_j)\dd t.
\label{eq:fi:iden}
\end{align}
Thus, the right-hand side is nonpositive under \eqref{eq:fi:sign} and nonnegative when both inequalities are reversed.

It remains to remove the restriction that the test function vanish near the origin.  Let $\chi_\eta$ be a radial cutoff such that $0\leq\chi_\eta\leq1$, $\chi_\eta=0$ on $[0,\eta]$, $\chi_\eta=1$ on $[2\eta,1]$, and $|\chi_\eta'|\leq C/\eta$.  The four-dimensional Hardy inequality gives
\begin{equation}\label{eq:fi:hardy}
\int_0^1|\phi(r)|^2r\dd r \leq C\int_0^1|\phi_r(r)|^2r^3\dd r, \quad \forall\phi\in\mathcal V_0.
\end{equation}
In particular,
\begin{equation*}
\int_\eta^{2\eta}|\chi_\eta'(r)\phi(r)|^2r^3\dd r \leq C\int_\eta^{2\eta}|\phi(r)|^2r\dd r \longrightarrow0 \quad\text{as }\eta\to 0^+.
\end{equation*}
Together with dominated convergence for the remaining terms, this proves
$\chi_\eta\phi\longrightarrow\phi$ in $\mathcal V_0$.
Estimate \eqref{eq:fi:hardy} also shows that multiplication by $\chi_\eta$ is uniformly bounded on $\mathcal V_0$.  Hence the same convergence holds in $L^2(0,S;\mathcal V_0)$ for time-dependent test functions.  Since the cutoff preserves nonnegativity, nonnegative smooth test functions vanishing near $r=0$ are dense in the set of nonnegative functions in $L^2(0, S;\mathcal V_0)$.

Finally, all terms in the weak formulation are continuous on $L^2(0, S;\mathcal V_0)$.  Indeed, this follows from the assumptions on $v_t$ and $G$, the interface trace theorem, and the boundedness of $v$, which implies $F(r,v)\in L^2(0, S;\mathcal H)$.  We may therefore pass to the limit in \eqref{eq:fi:iden}.  This gives the required weak inequality for every nonnegative $\phi\in L^2(0, S;\mathcal V_0)$ and proves the lemma.
\end{proof}

\section{The CDY lower profile}\label{sec:cdy}

In this appendix, we verify the CDY lower profile from Proposition~\ref{lem:cdy-pro}. The pointwise calculation for $\underline q$ is classical and is included for readers' convenience;  see \cite{Chang-Ding-Ye-1992, Lin-Wang-2008}. We also verify the regularity needed to treat $\underline v$ as a weak subsolution for weak comparison in Appendix~\ref{subsec:weak-comp}.

\begin{proposition}\label{lem:cdy-pro}
Fix $\varepsilon\in(0,1)$ and set $a:=1+\varepsilon$.  There exist $\mu_0>1$ and $\kappa>0$, depending only on $\varepsilon$, with the following property.  Let $\mu\geq\mu_0$, and let $\ell:[0,T_*)\to(0,+\infty)$ be a $C^1$ nonincreasing function satisfying
\begin{equation}\label{eq:cdy:scasp}
0\leq-\ell'(t)\leq\frac{\kappa}{\mu}\ell(t)^\varepsilon.
\end{equation}
We further define
\begin{equation}\label{eq:cdy:sub}
\underline q(t,r):=Q_{\ell(t)}(r)+Q_\mu(r^{1+\varepsilon}).
\end{equation}
Then, $\underline q$ is a subsolution of \eqref{eq:flow:radial} in the sense of
\begin{equation}\label{eq:cdy:subine}
\underline q_t-\cL\underline q\leq0, \quad \forall (t,r)\in(0,T_*)\times(0,1).
\end{equation}

Moreover, for every $S\in(0,T_*)$, the function $\underline v:=\underline q/r$ belongs to $\mathcal W_S$, $\underline v_t\in L^2(0,S;\mathcal V_0')$, and has regular residual  $G\in L^2(0,S;\mathcal H)$ (see Lemma~\ref{lem:fix-int}).  Consequently, $\underline v$ is a weak subsolution in the sense of Definition~\ref{def:weak-subsuper}.
\end{proposition}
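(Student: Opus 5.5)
The plan is the classical Chang--Ding--Ye computation, organised around the exact identities $\cL Q_\ell=0$ and the rescaled equation satisfied by $P(r):=Q_\mu(r^{a})$. Write $A:=Q_{\ell(t)}(r)$ and $P$ as above. Since $\partial_t A=-\ell'\,\partial_\ell Q_\ell=-\ell'\cdot\frac{-2r}{\ell^2+r^2}\cdot(-1)\ldots$, more precisely $\partial_t Q_{\ell}(r)=\ell'\,\partial_\ell Q_\ell(r)=-\ell'\,\frac{2r}{r^2+\ell^2}\geq 0$ is bounded by $|\ell'|\cdot\frac{2r}{r^2+\ell^2}$. For $P$, use $s=r^a$: $P_{rr}+\frac1rP_r=a^2r^{2a-2}\bigl(Q_\mu''(s)+\frac1sQ_\mu'(s)\bigr)$ and $Q_\mu''+\frac1sQ_\mu'=\frac{\sin 2Q_\mu}{2s^2}$, so
\begin{equation*}
\cL P=\frac{a^2\sin(2P)-\sin(2P)}{2r^2}=\frac{(a^2-1)\sin(2P)}{2r^2}>0
\end{equation*}
since $0<P<\pi/2$ for $\mu\geq\mu_0>1$. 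Then
\begin{equation*}
\underline q_t-\cL\underline q=-\ell'\,\frac{2r}{r^2+\ell^2}-\frac{(a^2-1)\sin 2P}{2r^2}-\frac{\sin(2A+2P)-\sin 2A-\sin 2P}{2r^2}.
\end{equation*}
The nonlinear interaction equals $-\frac{1}{r^2}\sin(A+P)\cdot\ldots$; using $\sin(2A+2P)-\sin2A-\sin2P=-4\sin A\sin P\sin(A+P)\cdot$(trig identity: $\sin x+\sin y-\sin(x+y)=4\sin\frac x2\sin\frac y2\sin\frac{x+y}2$), the interaction term equals $+\frac{2}{r^2}\sin A\sin P\sin(A+P)\cdot$ sign to be checked; since $A\in(0,\pi)$, $P$ small, one gets a term of size $O(\sin A\,\sin P\,|\sin(A+P)|)/r^2$.

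Key steps in order: (1) verify the trigonometric reduction, so that the residual is $-\ell'\frac{2r}{r^2+\ell^2}-\frac{1}{2r^2}\bigl[(a^2-1)\sin2P-E\bigr]$ with $|E|\lesssim \sin A\,P\,|\sin(A+P)|$-type bound; (2) split $r\le\ell$-region type and note $\sin A=\frac{2r\ell}{r^2+\ell^2}$, and bound $E$ by $C\sin A\cdot P^2$ when $A$ near $\pi$ or by a favorable sign when the interaction is nonpositive; show $E\le \frac{a^2-1}{2}\sin2P$ using $P\le 2r^a/\mu$ small and $\sin A\,|\sin(A+P)|\le$ something, absorbing into the good term $(a^2-1)P/r^2\approx 2\varepsilon r^{a-2}/\mu$ possibly after enlarging $\mu_0$; (3) absorb the time term: $|\ell'|\frac{2r}{r^2+\ell^2}\le\frac{\kappa}{\mu}\ell^\varepsilon\frac{2r}{r^2+\ell^2}$, and since $\ell^\varepsilon r/(r^2+\ell^2)\le r^{\varepsilon-1}\cdot\frac{\ell^{\varepsilon}r^{1-\varepsilon}\cdot r}{r^2+\ell^2}\le r^{\varepsilon-1}=r^{a-2}$, it is dominated by half the good term $\approx\varepsilon r^{a-2}/\mu$ once $\kappa$ is small depending on $\varepsilon$. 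The main obstacle is step (2): controlling the cross term uniformly in $\ell$, in the transition zone $r\sim\ell$ where $\sin A$ is of order one; I expect to use that $\sin(2A+2P)-\sin2A-\sin2P$ is $O(P\sin A\cdot\sin A)$-plus-$O(P^2)$ and that only the part with unfavorable sign must be beaten, choosing $\mu_0$ large so $P$ is tiny.

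For the weak-subsolution part, $\underline v=Q_\ell(r)/r+Q_\mu(r^a)/r$; the first term is smooth in $B^4$ and the second behaves like $r^{\varepsilon}/\mu$, bounded with $\underline v_r\sim r^{\varepsilon-1}\in L^2(r^3dr)$, so $\underline v\in\mathcal W_S$; $\underline v_t=\partial_tQ_\ell/r$ is bounded on $[0,S]$ since $\ell\ge\ell(S)>0$ and $\ell'$ continuous; the residual $G=(\underline q_t-\cL\underline q)/r$ via \eqref{eq:rv:trans} is $O(r^{\varepsilon-3})$, which lies in $L^2(r^3dr)$ iff $2\varepsilon-3>-1$... not needed in general; instead I would note $G\le 0$ and check $G\in L^2(0,S;\mathcal H)$ directly from the explicit terms (the leading $r^{a-3}$ piece gives $\int r^{2\varepsilon-2}r^3dr<\infty$). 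Then Lemma~\ref{lem:fix-int} with no interfaces gives the weak subsolution property.
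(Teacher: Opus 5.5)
Your route is the same as the paper's. You use the exact identity for $\cL\bigl(Q_\ell+Q_\mu(r^a)\bigr)$ coming from $\cL Q_\ell=0$ and the scaling of $Q_\mu(r^a)$. You absorb the cross term by taking $\mu_0$ large, absorb $\underline q_t$ through $\ell^\varepsilon r/(r^2+\ell^2)\le r^{\varepsilon-1}$ by taking $\kappa$ small, and finish with Lemma~\ref{lem:fix-int} with no interfaces. However, the sign you leave ``to be checked'' is the crux of step~(2), and the sign you display is wrong. From $A_{rr}+A_r/r=\sin(2A)/(2r^2)$, $P_{rr}+P_r/r=a^2\sin(2P)/(2r^2)$ and your own trigonometric identity,
\begin{equation*}
2r^2\cL\underline q=(a^2-1)\sin(2P)+4\sin A\sin P\sin(A+P).
\end{equation*}
So the interaction enters $\underline q_t-\cL\underline q$ as $-\frac{2}{r^2}\sin A\sin P\sin(A+P)$, not with a plus sign. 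With your sign, step~(2) asks for a false inequality. At $r=\ell$ one has $A=\pi/2$, and the cross term plus the good term equals exactly $(3-a^2)\sin P\cos P/r^2>0$. This is positive whenever $\varepsilon<\sqrt3-1$, whatever $\mu_0$ is, since both terms scale like $P$.

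With the correct sign, the transition zone $r\sim\ell$ that you single out as the main obstacle is exactly where the cross term helps. Write $4\sin A\sin P\sin(A+P)=4\sin^2A\sin P\cos P+2\sin(2A)\sin^2P\geq-2\sin^2P$. The only unfavorable piece is then $O(P^2)$, and it is absorbed by half of $(a^2-1)\sin(2P)$ as soon as $\tan P\le(a^2-1)/2$, which is precisely the paper's choice of $\mu_0$. The paper's identity $2r^2\cL\underline q=(a^2-\cos2A)\sin2P+\sin2A\,(1-\cos2P)$ is yours regrouped. There the sole sign-indefinite term satisfies $\sin2A\,(1-\cos2P)\ge-\tan P\,\sin2P$, so no case analysis is needed. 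Your dichotomy also works once the sign is fixed: if $A+P\le\pi$ the cross term is favorable, and otherwise $\sin A\le P$ and $|\sin(A+P)|\le P$. In the weak part, $\underline q_t=O(r)$ and $\cL\underline q=O(r^{\varepsilon-1})$, so $G=(\underline q_t-\cL\underline q)/r=O(r^{\varepsilon-2})$, not $O(r^{\varepsilon-3})$. The relevant integrand is therefore $r^{2\varepsilon-4}r^3=r^{2\varepsilon-1}$, which is integrable, as in the paper. The rest of that part matches the paper.
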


\begin{proof}[Proof of Proposition~\ref{lem:cdy-pro}]
Set $H(r):=Q_\mu(r^a)$.  Since $Q_\ell$ is a stationary harmonic map, a direct calculation gives
\begin{equation}\label{eq:cdy:tenid}
2r^2\cL(Q_\ell+H) =(a^2-\cos(2Q_\ell))\sin(2H) +\sin(2Q_\ell)(1-\cos(2H)).
\end{equation}
The second term can have either sign.  However, $(1-\cos(2H))/\sin(2H)=\tan H$, and $0\leq H(r)\leq2\arctan(1/\mu)$.  We choose $\mu_0$ so large that
\begin{equation*}
\tan H(r)\leq\frac{a^2-1}{2}, \quad \forall r\in[0,1], \; \forall \mu\in[\mu_0,+\infty).
\end{equation*}
Using $|\sin(2Q_\ell)|\leq1$ and $a^2-\cos(2Q_\ell)\geq a^2-1$, identity \eqref{eq:cdy:tenid} yields
\begin{equation}\label{eq:cdy:tenlow}
\cL(Q_\ell+H) \geq C_\varepsilon\frac{r^{\varepsilon-1}}\mu, \quad \forall r\in(0,1],
\end{equation}
after increasing $\mu_0$ once more.  Here we used $\sin(2H)\geq CH\geq Cr^a/\mu$ in the small range of $H$.

On the other hand,
\begin{equation*}
\underline q_t=-\frac{2r\ell'}{\ell^2+r^2} \leq\frac{2\kappa}{\mu} \frac{\ell^\varepsilon r}{\ell^2+r^2}.
\end{equation*}
Writing $s=r/\ell$, we obtain
\begin{equation*}
\frac{\ell^\varepsilon r}{\ell^2+r^2} =
r^{\varepsilon-1}\frac{s^{2-\varepsilon}}{1+s^2} \leq C_\varepsilon r^{\varepsilon-1}.
\end{equation*}
Selecting $\kappa>0$ small and combining this estimate with \eqref{eq:cdy:tenlow}, we obtain \eqref{eq:cdy:subine}. 

Finally, on every time interval on which $\ell$ is bounded away from zero, $\underline q/r=2/\ell+O(r^\varepsilon)$ and $\partial_r(\underline q/r)=O(r^{\varepsilon-1})$ near the origin, uniformly in time.  Its time derivative is bounded there.  
Moreover, identity \eqref{eq:cdy:tenid}, the bound $H=O(r^{1+\varepsilon})$, and \eqref{eq:rv:trans} show that the regular residual in the lifted equation is $O(r^{\varepsilon-2})$.  It therefore belongs to $L^2(0,S;\mathcal H)$, because $r^{2\varepsilon-4}r^3$ is integrable at the origin.   Thus, $\underline q/r$ belongs to $\mathcal W_S$, and its time derivative
and regular residual have the stated regularity. Lemma~\ref{lem:fix-int} therefore extends the pointwise inequality across the origin in the weak sense. This finishes the proof.
\end{proof}

For later reference, suppose that $\ell(t)\to0^+$ at a finite time $T_*$.  If a solution $q$ remains above $\underline q$, then evaluation at $r=\ell(t)$ gives
\begin{equation*}
q(t,\ell(t))\geq Q_{\ell(t)}(\ell(t))=\frac\pi2.
\end{equation*}
Since $q(t,0)=0$, the mean value theorem yields
\begin{equation}\label{eq:cdy:gralow}
\|q_r(t,\cdot)\|_{L^\infty(0,1)} \geq\frac\pi{2\ell(t)}.
\end{equation}
Thus, the collapse of the scale forces gradient blow-up.

\vspace{2mm}

\noindent\textbf{Acknowledgment.} The author is grateful to Jean-Michel Coron for many stimulating discussions. He is partially  supported by NSFC 12571474 and Shanghai Qiguang Natural Science Development Foundation. 
\vspace{2mm}

\noindent\textbf{Use of AI declaration.}
OpenAI’s GPT-5.6-sol model was used as an aid in mathematical discussions.  The author conceived and developed the work and finalized the manuscript.

\bibliographystyle{abbrv}
\bibliography{references}

\end{document}